\journal{Journal of Multivariate Analysis}
 \tikzstyle{Noteimportant} =[fill=white, very thick, draw=red, rectangle, rounded corners,
 \tikzstyle{Note} =[fill=white, very thick, draw=green, rectangle, rounded corners,
\algnewcommand\algorithmicto{\textbf{to}}
\DeclareMathOperator*{\argmin}{arg\,min}
\newcommand{\Ind}{\ensuremath{{\mathds{1}}}} 
\newcommand{\Var}{\ensuremath{{\mathbb V}}} 
\newcommand{\E}{\ensuremath{{\mathbb E}}} 
\newcommand{\Prob}{\ensuremath{{\mathbb P}}} 
\newcommand{\N}{\ensuremath{{\mathbb N}}}
\newcommand{\R}{\ensuremath{{\mathbb R}}}
\newcommand{\F}{\ensuremath{{\mathcal F}}}
\newcommand{\A}{\ensuremath{{\mathcal A}}}
\newcommand{\X}{\ensuremath{{\mathcal X}}}
\newcommand{\TV}{\ensuremath{{\mbox TV}}}
\newcommand{\sq}{\ensuremath{{\mbox sq}}}
\begin{document}
\begin{frontmatter}
\title{High Probability Lower Bounds for the Total Variation Distance}

\author[ETH]{Jeffrey N{\"a}f\corref{mycorrespondingauthor}}
\author[ETH]{Loris Michel}
\author[ETH]{Nicolai Meinshausen\corref{thanks}}

\address[ETH]{Seminar for Statistics, ETH Z\"{u}rich, Switzerland}

\cortext[thanks]{The authors would like to thank Sebastian Sippel for providing and maintaining the NCEP Reanalysis 2 data.}
\cortext[mycorrespondingauthor]{Corresponding author. E-mail address: \url{jeffrey.naef@stat.math.ethz.ch}}

\begin{abstract}
The statistics and machine learning communities have recently seen a growing interest in classification-based approaches to two-sample testing. The outcome of a classification-based two-sample test remains a rejection decision, which is not always informative since the null hypothesis is seldom strictly true. Therefore, when a test rejects, it would be beneficial to provide an additional quantity serving as a refined measure of distributional difference. In this work, we introduce a framework for the construction of high-probability lower bounds on the total variation distance. These bounds are based on a one-dimensional projection, such as a classification or regression method, and can be interpreted as the minimal fraction of samples pointing towards a distributional difference. We further derive asymptotic power and detection rates of two proposed estimators and discuss potential uses through an application to a reanalysis climate dataset.
\end{abstract}

\begin{keyword}
two-sample testing, distributional difference,  classification, higher-criticism 
\end{keyword}

\end{frontmatter}

\section{Introduction} \label{sec:intro}

Two-sample testing is a classical statistical task recurring in various scientific fields.
Based on two samples $X_i$, $i=1,\ldots,m$  and $Y_j$, $j=1,\ldots,n$ drawn respectively from probability measures $P$ and $Q$, the goal is to test the hypothesis $H_0: P=Q$, against potentially any alternative. The trend in the last two decades towards the analysis of more complex and large-scale data has seen the emergence of classification-based approaches to testing. Indeed, the idea of using classification for two-sample testing traces back to the work of \citet{Friedman2004OnMG}. Recently this use of classification has seen a resurgence of interest from the statistics and machine learning communities with empirical and theoretical work (\cite{DBLP}; \cite{Rosen2016};  \cite{facebookguys}; \cite{hediger2020use}; \cite{BORJI2019}; \cite{gagnon-bartsch2019}; \cite{kim2019}; \cite{Cal2020}) motivated by broader applied scientific work as well-explained in \cite{DBLP}.  

However, as already pointed out in \citet{Friedman2004OnMG}, it is  practically very unlikely that two samples come from the exact same distribution. It means that with enough data and using a  ``universal learning machine'' for classification, as Friedman called it, the null will be rejected no matter how small the difference between $P$ and $Q$ is. Therefore, in many situations when a classification-based two-sample test rejects, it would be beneficial to have an additional measure quantifying the actual distributional difference supported by the data. 

Practically, one can observe that with two finite samples some fraction of observations will tend to illuminate a distributional difference more than others. At a population level, this translates to the fraction of probability mass one would need to change from $P$ to see no difference with $Q$. It is well known that this is an equivalent characterization of the total variation distance between $P$ and $Q$, see e.g. \cite{YuvalMixing}. We recall that for two probability distributions $P$ and $Q$ on measurable space $(\mathcal{X}, \A)$ the \emph{total variation (TV) distance}  is defined as 

$$\TV(P,Q) = \sup_{A \in \A}\left| P(A)-Q(A) \right |.$$

\noindent Therefore, based on finite samples of $P$ and $Q$, a finer question than ``is $P$ different from $Q$ ?'' could be stated as ``What is a probabilistic lower bound on the fraction of observations actually supporting a difference in distribution between $P$ and $Q$ ?''. This would formally translate into the construction of an estimate $\hat{\lambda}$ satisfying 
$$\mathbb{P}(\hat{\lambda} > \TV(P,Q)) \leq \alpha,$$ 
for $\alpha \in (0,1)$. We call such an estimate $\hat{\lambda}$ a high-probability lower bound (HPLB) for $\TV(P,Q)$. An observation underlying our methodology is that uni-dimensional projections of distributions act monotonically on the total variation distance. Namely for a given (measurable) projection $\rho: \mathcal{X} \rightarrow I \subseteq \mathbb{R}$,

\begin{equation}\label{gap}
\TV\left(\rho_{\#}P,\rho_{\#}Q\right) \leq \TV \left(P,Q \right),
\end{equation}

\noindent where $\rho_{\#}P$ is the push-forward measure of $P$, defined as $\rho_{\#}P(A):=P(\rho^{-1}(A))$ for a measurable set $A$. The construction of an HPLB for a given projection $\rho$ is the focus of our work. They are used as a proxies for $\TV \left(P,Q \right)$ through (\ref{gap}). The gap   depends on the informativeness of the selected projection $\rho$ about the distributional difference between $P$ and $Q$. This naturally established a link with classification and provides insights on how to look for ``good projections''. Nevertheless, the focus of the present paper is not to derive conditions on how to construct optimal projections $\rho$, but rather an analysis of the construction and properties of HPLBs for fixed projections. 
As a by-product, we address an issue that seems to have gone largely unnoticed in the literature on classification and two-sample testing. Namely, given a function $\rho: \X \to [0,1]$ estimating the probability of belonging to the first sample say, what is the ``cutoff'' $t^*$ allowing for the best possible detection of distributional difference for the binary classifier
\begin{equation}\label{binarybase}
    \rho_{t}(Z):=\Ind\{\rho(Z) > t \}.
\end{equation}
In line with the Bayes classifier, $t^*=1/2$ is often used in classification tasks. However, we show that for the detection of distributional difference, this is not always the best choice. The next section illustrates this issue through a toy example.

\subsection{Toy motivating example}\label{toyexamplesec}

As an illustrative example highlighting the importance of the choice of cutoff for a binary classifier, let us consider two probability distributions $P$ and $Q$ on $\mathbb{R}^{12}$ with mutually independent margins defined as follows:  $P=\mathcal{N}_{12}(\mu, \Sigma)$ where $\Sigma=I$ is the $12 \times 12$ identity matrix and $\mu=( 0, \ldots, 0 )$ and $Q=(1-\varepsilon) P + \varepsilon C$, where $C=\mathcal{N}_{12}(\mu_C, \Sigma)$, with $\mu_C=( 3, 3, 0, \ldots, 0 )$ and $\varepsilon = 10^{-2}$. We assume to observe an iid sample $X_1,\ldots, X_n$ from $P$ and $Y_1,\ldots,Y_n$ from $Q$. Figure \ref{fig:example} shows the projection of samples from P and Q on the first two components.

Consider $\rho: \R^{12} \to [0,1]$ to be a function returning an estimate of the probability that an observation $Z$ belongs to the sample of $Q$, obtained for instance from a learning algorithm trained on independent data. Assume we would like to test whether there is a significant difference between a sample of $P$ and a sample from $Q$ based on the binary classifier $\rho_t$ as defined in \eqref{binarybase}.

Table \ref{tab:example} (left) presents the confusion matrix obtained from a Random Forest classifier $\rho_{t}$ trained on $n=10'000$ samples of $P$ and $Q$ (as defined above) with the usual cutoff of $t=0.5$. Based on this matrix, one can use a permutation approach to test $H_0: P=Q$. The corresponding p-value is $0.65$, showing that, despite the high sample size, the classifier is not able to differentiate the two distributions.
 
 \begin{table}
        \centering
    \begin{tabular}{l|l|c|c|c}
\multicolumn{2}{c}{}&\multicolumn{2}{c}{True Class}&\\
\cline{3-4}
\multicolumn{2}{c|}{}&0&1&\multicolumn{1}{c}{}\\
\cline{2-4} 
& 0 & 5121 & 5150  \\
\cline{2-4}    
& 1 & 4879  & 4850  \\
\cline{2-4}
\end{tabular}
  \begin{tabular}{l|l|c|c|c}
\multicolumn{2}{c}{}&\multicolumn{2}{c}{True Class}&\\
\cline{3-4}
\multicolumn{2}{c|}{}&0&1&\multicolumn{1}{c}{}\\
\cline{2-4} 
& 0 & 9987 & 9925  \\
\cline{2-4}    
& 1 & 13  & 75  \\
\cline{2-4}
\end{tabular}
\caption{Confusion matrices for $2$ different thresholds, $t=0.5$ (left) and $t=0.7$ (right). P-values are given as $0.65$ and $9 \times 10^{-4}$ respectively. \label{tab:example}}
 \end{table}
 
 However this changes if we instead use a cutoff of $t=0.7$. Using the same permutation approach, we obtain a p-value of $9 \times 10^{-4}$. The corresponding confusion matrix is displayed on Table \ref{tab:example} (right).

This observation supports that even though $t=0.5$ links to the optimal Bayes rate, depending on the choice of alternative (how $P$ and $Q$ differ), different cutoffs induce vastly different detection powers. Put differently, $t=0.5$ is not always optimal for detecting a change in distribution. In this work, we will explore why this is the case and how this impacts the construction of HPLBs for $\TV(P,Q)$ and their (asymptotic) statistical performances. As an empirical illustration, we show at the end of Section \ref{TVsearchec} that, for the same simulation setting, the HPLB based on a cutoff of 0.5 will be zero, while the one that adaptively chooses the ``optimal'' cutoff will be positive.

\begin{figure}
    \centering
      \includegraphics[height=7cm]{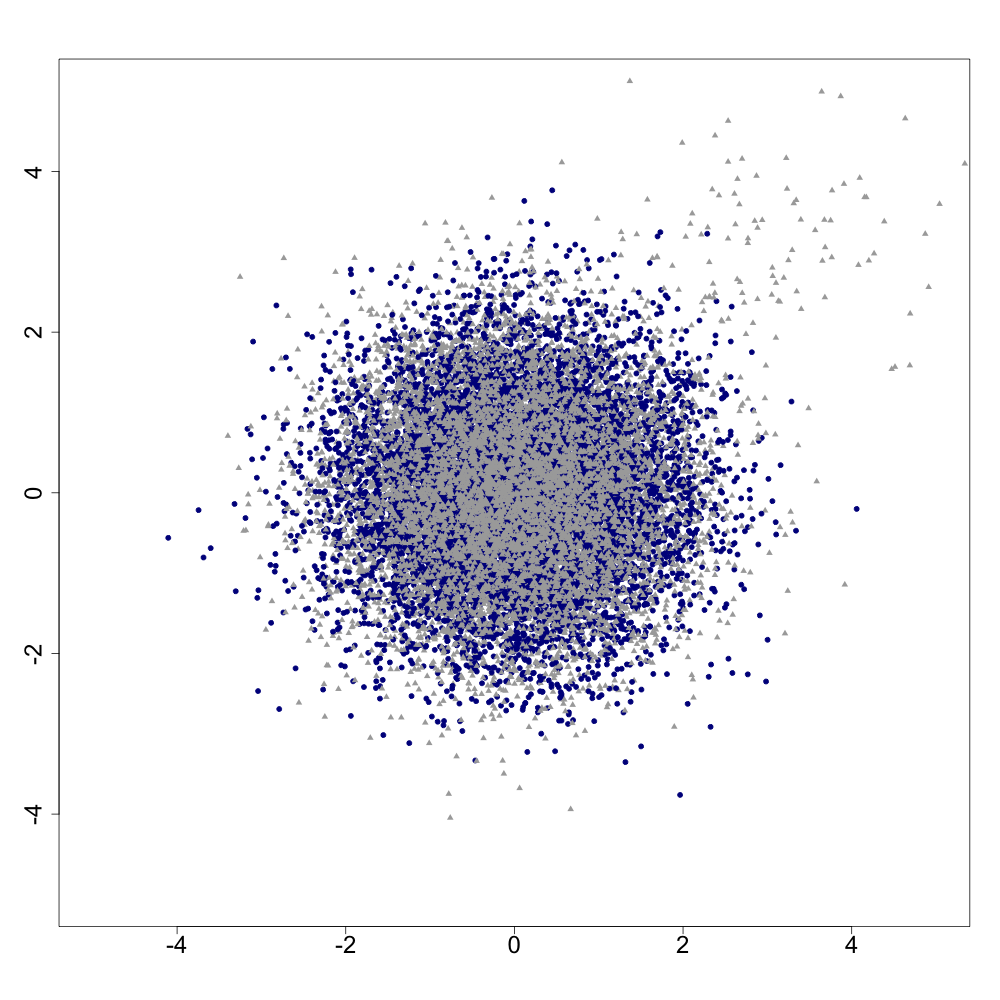}
    \caption{Projections on first two margins of $10'000$ samples from $P$ (blue) and $Q$ (grey).\label{fig:example}}
\end{figure}

\subsection{Contribution and relation to other work}

Direct estimators or bounds for the total variation distance have been studied in previous work when the distributions are assumed to be discrete or to belong to a known given class  (e.g. \cite{ValiantNips}; \cite{sason2015upper}; \cite{Jiao2016}; \cite{devroye2018total}; \cite{kosov2018total}; \cite{Nielsen2018}). Our work aims at constructing lower bounds on the total variation distance based on samples from two unknown distributions. The goal is to provide additional information over the rejection status of classification-based two-sample tests. We summarize our contributions as follows:

\begin{itemize}
    \item[] \textbf{Construction of HPLBs:} We provide a framework for the construction of high probability lower bounds for the total variation distance based on (potentially unbalanced) samples and propose two estimators derived from binary classification. The first estimator $\hat{\lambda}_{bayes}$, assumes the fixed cutoff $1/2$, whereas the second one $\hat{\lambda}_{adapt}$, is cutoff-agnostic. Despite the somewhat complicated nature of the latter estimator, we show that is a valid HPLB.
    \item[] \textbf{Asymptotic detection and power boundaries:} We characterize power and detection rates for the proposed estimators for local alternatives with decaying power rate $\TV(\rho_{\#}P_N,\rho_{\#}Q_N) \propto N^{-\gamma}$, $-1 < \gamma < 0$, for $N=m+n$. We  summarize the main result as follows:  
    Consider the minimal rate $-1 < \gamma < 0$ for which a difference in $P$ and $Q$ could still be detected, if the optimal cutoff $t^*$ in \eqref{binarybase} for a given $P$, $Q$ was known -- this will be referred as the ``oracle rate''. The estimator $\hat{\lambda}_{adapt}$ always attains the oracle rate, whereas $\hat{\lambda}_{bayes}$ only attains the oracle rate if the optimal cutoff is actually $t^*=1/2$. We also obtain the same favorable results for $\hat{\lambda}_{adapt}$ when considering a sequence $\hat{\rho}_N$, estimated on independent data.
    \item[] \textbf{Application:} We show the potentially use and efficacy of HPLBs on the total variation distance in two different types of applications based on a climate reanalysis dataset. 
    
    \item[] \textbf{Software:} We provide implementations of the proposed estimators in the R-package $\texttt{HPLB}$ available on CRAN.
\end{itemize}   

From a technical point of view, the construction of our lower bound estimators relate to the higher-criticism literature (\cite{donoho2004}; \cite{donoho2015}) and is inspired by similar methodological constructions of high-probability lower bounds in different setups (e.g. \cite{Nicolai2005}; \cite{Nicolai2006}; \cite{meinshausen2006}). It has also some similarities with the problem of semi-supervised learning in novelty detection (\cite{Blanchard2010}).

\noindent The paper is structured as follows. Section 2 introduces the classification framework for constructing lower bounds on total variation distance and describes our proposed estimators. Power and detection rates guarantees of our proposed estimators are presented in Section 3. In Section 4 we generalize our framework beyond binary classification and in Section 5 we present three different applications of our estimators in the context of a climate dataset.

\section{Theory and methodology}\label{sec:meth}


Let $P$ and $Q$ be two probability measures on $\X$ and $\rho: \X \to I$ be any measurable function mapping to some subset $I \subset \mathbb{R}$. If not otherwise stated, we consider $I=[0,1]$. The following chain of inequalities for $\TV(P,Q)$ will form the starting point of our approach:
\begin{align}\label{Basicinequality}
    \TV(P,Q) \geq \TV(\rho_{\#}P,\rho_{\#}Q) \geq \sup_{t \in I} \left| \rho_{\#}P \left( (0,t] \right) - \rho_{\#}Q \left( (0,t] \right) \right|. 
\end{align}
For any such $\rho$, one can define a binary classifier $\rho_t: \X  \to \{0,1 \}$ based on the cutoff $t$ by $\rho_t(z):=\Ind{\{ \rho(z) > t \}}$.  Before diving into more details, let us introduce our setup and some necessary notation. 

 \textbf{Setup and notation:} Where not otherwise stated, we assume to observe two independent iid samples $X_1,\ldots, X_m$ from $P$ and $Y_1, \ldots, Y_n$ from $Q$. We define

$$Z_i := \begin{cases}
X_{i} \hspace{0.5cm} \mbox{   \text{if} } 1 \leq i \leq m, \\
Y_{i} \hspace{0.5cm} \mbox{   \text{if} } m+1 \leq i \leq m+n,
\end{cases}$$

\noindent and attach a label $\ell_i:=0$ for $i=1,\ldots, m$ and $\ell_i :=1$, for $i=m+1,\ldots, m+n$. Both $m$ and $n$ are assumed to be non-random with $N=m+n$ such that $m/N \to \pi \in (0,1)$, as $N \to \infty$. For notational convenience, we also assume that $m \leq n$. We denote by $f$ and $g$ the densities of $P$, $Q$ respectively.\footnote{Wlog, we assume that the densities exist with respect to some common dominating measure.} We define $F := \rho_{\#}P$ and $G := \rho_{\#}Q$ and introduce the empirical measures
\begin{align*}
    \hat{F}_m(t) :=\frac{1}{m} \sum_{i=1}^m  \Ind{\{ \rho(X_i) \leq t \}} , \  \ \ \hat{G}_n(t) :=\frac{1}{n} \sum_{j=1}^n \Ind{\{ \rho(Y_j) \leq t \}} , 
\end{align*}
of all observations $\left\{\rho_i\right\}_{i=1,\ldots,N}$ := $\left\{\rho(Z_i)\right\}_{i=1,\ldots,N}$. Denote by $\rho_{(z)}$, $z \in \{1,\ldots, N\}$, the $z^{th}$ order statistic of  $\left( \rho_i \right)_{i=1,\ldots,N}$. Throughout the text, $ q_{\alpha}(p,m)$ is the $\alpha$-quantile of a binomial distribution with success probability $p$ and number of trials $m$ symbolized by $\mbox{Binomial}(p, m)$. Similarly, $q_{\alpha}$ is the $\alpha$-quantile of a standard normal distribution, denoted $\mathcal{N}(0,1)$. Finally, for two functions $h_1,h_2: \mathbb{N} \rightarrow [0,\infty)$, the notation $h_1(N) \asymp h_2(N)$, as $N \to \infty$ means $\limsup_{N \to \infty} h_1(N)/h_2(N) \leq a_1 \in (0, +\infty)$ and $\limsup_{N \to \infty} h_2(N)/h_1(N) \leq a_2 \in (0, +\infty)$. The first part of our theoretical analysis centers around the projection $\rho^{*}: \X \to [0,1]$ given as
\begin{equation}\label{Bayesprobability}
       \rho^{*}(z):=\frac{ g(z)}{f(z)+ g(z)}.
\end{equation}
As a remark, if we put a prior probability $\pi=1/2$ on observing a label $\ell$ of 1, $\rho^*$ is the posterior probability of observing a draw from $Q$, referred to as the Bayes probability. 

We now formally state the definition of a high-probability lower bound for the total variation distance, using the notation $\lambda:=\TV(P,Q)$ from now on:

\begin{definition}\label{HPLBDef}
For a given $\alpha \in (0,1)$, an estimate $\hat{\lambda}=\hat{\lambda}\left( (Z_1, \ell_1), \ldots, (Z_N, \ell_N)  \right)$ satisfying
\begin{equation}
    \Prob(\hat{\lambda} > \lambda) \leq \alpha
\end{equation}
will be called high-probability lower bound (HPLB) at level $\alpha$. If instead only the condition
\begin{equation}
    \limsup_{N \to \infty } \Prob(\hat{\lambda} > \lambda) \leq \alpha
\end{equation}
holds, we will refer to $\hat{\lambda}$ as asymptotic high-probability lower bound (asymptotic HPLB) at level $\alpha$. 
\end{definition}

\noindent Note that an estimator $\hat{\lambda}$ depends on a function $\rho$. When necessary, this will be emphasized with the notation $\hat{\lambda}^{\rho}$ throughout the text. Whenever $\rho$ is not explicitly mentioned it should be understood that we consider $\rho=\rho^*$.

The above definition is very broad and does not entail any informativeness of the (asymptotic) HPLB. For instance, $\hat{\lambda}=0$ is a valid HPLB, according to Definition \ref{HPLBDef}. Consequently, for $\varepsilon \in (0,1]$, we study whether for a given (asymptotic) HPLB $\hat{\lambda}$, 
\begin{equation}\label{overalreq}
    \mathbb{P}(\hat{\lambda} > (1-\varepsilon) \lambda) \to 1, \tag{$\mathcal{C}_{\varepsilon}$}
\end{equation}
as $N \to \infty$. This entails several cases: if $\varepsilon=1$, then \eqref{overalreq} means the (detection) power goes to 1. If \eqref{overalreq} is true for \emph{all} $\varepsilon \in (0,1]$, it corresponds to consistency of $\hat{\lambda}$. One could also be interested in a non-trivial fixed $\varepsilon$,\ i.e. in detecting a fixed proportion of $\lambda$. 
In order to quantify the strength of a given (asymptotic) HPLB, we examine how fast $\lambda$ may decay to zero with $N$, such that $\hat{\lambda}$ still exceeds a fraction of the the true $\lambda$ with high probability. More precisely, we assume that the signal vanishes at a rate $N^{\gamma}$, for some $-1 < \gamma < 0$, i.e. $1 > \lambda:=\lambda_N \asymp N^{\gamma}$, as $N \to \infty$. If for a given estimator $\hat{\lambda}$, $\varepsilon \in (0,1]$ and $-1 \leq \underline{\gamma}(\varepsilon) < 0$, \eqref{overalreq} is true for all $ \gamma > \underline{\gamma}(\varepsilon)$, we write $\hat{\lambda}$ attains the rate $\underline{\gamma}(\varepsilon)$. To quantify the strength of an estimator $\hat{\lambda}$, we will study the smallest such rate $\underline{\gamma}(\varepsilon)$ it can attain for a given $\varepsilon$, denoted as $\underline{\gamma}^{\hat{\lambda}}(\varepsilon)$. Formally,   

\begin{definition}
For a given (asymptotic) HPLB $\hat{\lambda}$ and for $\varepsilon \in (0,1]$, we define  $\underline{\gamma}^{\hat{\lambda}}(\varepsilon) := \inf\{\gamma_0 \in [-1,0): \text{ for all } \gamma > \gamma_0 \text{ and }  \lambda \asymp N^{\gamma},\  \mathbb{P}(\hat{\lambda} > (1-\varepsilon) \lambda) \to 1 \}$.
\end{definition}

Of course, attaining the true $\lambda$, might be unrealistic in general. In such cases it is also possible to regard $\lambda$ as the total variation distance between the two distributions after the projection through $\rho$, $\TV(\rho_{\#}P,\rho_{\#}Q)$, as described in more detail in Section \ref{powersec}.

In the following, we aim to construct informative (asymptotic) HPLBs for $\TV(P,Q)$. To put the previously introduced rates into perspective, we first introduce an ``optimal'' or \emph{oracle rate}. In Section \ref{Binonmethod} we introduce binary classification asymptotic HPLBs focusing on the fixed cutoff $1/2$. Section \ref{TVsearchec} will then introduce a more data adaptive asymptotic HPLB that indeed considers the supremum over all available cutoffs in the sample.

\subsection{Oracle rate}


\noindent In light of \eqref{Basicinequality} and the notation introduced in the last section, for $(t_N)_{N\geq 1} \subset I$ a (nonrandom) sequence of cutoffs, we define the estimator
\begin{equation}\label{oraclestimator_0}
    \hat{\lambda}^{\rho}(t_N) = \hat{F}_m(t_N) - \hat{G}_n(t_N) - q_{1-\alpha} \sigma(t_N),
\end{equation}
where $\sigma(t)$ is the theoretical standard deviation of $\hat{F}_m(t) - \hat{G}_n(t)$,
\begin{equation}\label{sigmat}
\sigma(t)=\sqrt{ \frac{F(t)(1-F(t))}{m} + \frac{G(t)(1-G(t))}{n} }.
\end{equation}

Using \eqref{Basicinequality}, it can be shown that:

\begin{restatable}{proposition}{oraclelevel}\label{oraclelevel}
Let $\lambda > 0$. For any sequence $(t_N)_{N \geq 1} \subset I$ of cutoffs, $\hat{\lambda}^{\rho}(t_N)$ defined in \eqref{oraclestimator_0} is an asymptotic HPLB of $\lambda$ (at level $\alpha$) for any $\rho: \X \to I$.
\end{restatable}

\noindent The condition $\lambda > 0$ arises from a technicality -- for $\lambda=0$, one can construct a sequence $(t_N)_{N \geq 1}$ such that the level cannot be conserved. Since $\hat{\lambda}^{\rho}(t_N)$ only serves as a theoretical tool, this is not an issue. However the same problem will arise later in Section \ref{estimatedrho}.

Naturally, the performance of  $\hat{\lambda}^{\rho}(t_N)$ will differ depending on the choice of the sequence $(t_N)_{N\geq 1}$ and the choice of $\rho$. Ideally we would like to choose the ``optimal sequence'' $(t_N^*)_{N\geq 1}$ to reach the lowest rate $\underline \gamma$ possible. We might even want to attain the smallest possible rate $\underline{\gamma}$ if
we are able to freely choose $(t_N)_N$ for each given $\gamma > \underline{\gamma}$. This rate is technically the rate obtained by a collection of estimators, whereby for each $\gamma  > \underline{\gamma}$ a potentially different estimator $\hat{\lambda}(t_N(\gamma))$ may be used. More formally, given $\varepsilon \in (0,1]$, let for the following the \emph{oracle rate} $\underline \gamma^{\text{oracle}}(\varepsilon)$ be the smallest rate such that for all $\gamma > \underline\gamma^{\text{oracle}}(\varepsilon)$ there exists a sequence $(t_N)_{N\geq 1} \subset I$ such that \eqref{overalreq} is true for $\hat{\lambda}=\hat{\lambda}^{\rho}(t_N)$. If there exists a sequence $(t_N^*)_{N\geq 1} \subset I$ independent of $\gamma > \underline\gamma^{oracle}(\varepsilon)$, we may define the \emph{oracle estimator}
\begin{equation}\label{oraclestimator}
    \hat{\lambda}^{\rho}_{\text{oracle}} = \hat{F}_m(t_N^*) - \hat{G}_n(t_N^*) - q_{1-\alpha} \sigma(t_N^*).
\end{equation}
In this case $\underline\gamma^{oracle}(\varepsilon)$ is the smallest rate attained by $\hat{\lambda}_{oracle}^{\rho^*}$ for a given $\varepsilon$. Clearly, $\underline\gamma^{\text{oracle}}(\varepsilon)$ depends on $\rho$ as well, whenever $\rho=\rho^*$ in \eqref{Bayesprobability}, the dependence on $\rho$ is omitted. 

Since $(t_N^*)_{N \geq 1}$ corresponds to a specific nonrandom sequence, Proposition \ref{oraclelevel} ensures that $\hat{\lambda}^{\rho}_{oracle}$ is an asymptotic HPLB. Clearly, even if $\hat{\lambda}^{\rho}_{oracle}$ is defined, it will not be available in practice, as $(t_N^*)_{N\geq 1}$ is unknown. However, the oracle rate it attains should serve as a point of comparison for other asymptotic HPLBs. 
We close this section by considering an example:

\begin{example}\label{Example0}
Let $P,Q$ be defined by $P= p_N P_0 + (1-p_N) Q_0$ and $Q= (1-p_N) P_0 + p_N Q_0 $, where $p_N \in [0,1]$ and $P_0$, $Q_0$ have a uniform distribution on $[-1,0]$ and $[0,1]$ respectively. In this example, only $p_N$ is allowed to vary with $N$, while $P_0$, $Q_0$ stay fixed. If we assume $p_N > 0.5$, $\lambda_N=2p_N-1$ and $\lambda \asymp N^{\gamma}$ iff $p_N-1/2 \asymp N^{\gamma}$.
\noindent Thus

\begin{restatable}{proposition}{Examplezeroprop}\label{Example0prop}
For the setting of Example \ref{Example0}, assume $p_N > 0.5$ for all $N$, and $p_N-1/2 \asymp N^{\gamma}$. Then $\underline\gamma^{oracle}(\varepsilon)=-1/2$ for all $\varepsilon \in (0,1]$. This rate is attained by the oracle estimator in \eqref{oraclestimator} with $t_N^*=1/2$ for all $N$. 
\end{restatable}

\end{example}


\subsection{Binary classification bound}\label{Binonmethod}

Let us fix a cutoff $t \in [0,1]$. From the binary classifier $\rho_{t}(Z)=\Ind{\{ \rho(Z)  > t \}}$ we can define the \emph{in-class accuracies} as $A^{\rho}_0(t):=P(\rho_{t}(X)=0)$ and $A^{\rho}_1(t):=Q(\rho_{t}(Y)=1)$. From there, relation \eqref{Basicinequality} can be written in a more intuitive form:
\begin{equation}\label{Basicinequality2}
     TV(P,Q) \geq \sup_{t \in [0,1]}  \left[ A^{\rho}_0(t)  + A^{\rho}_1(t)\right] -1. 
\end{equation}
Thus, the (adjusted) maximal sum of in-class accuracies for a given classifier is still a lower bound on $\lambda=TV(P,Q)$. As it can be shown that the inequality in \eqref{Basicinequality2} is an equality for $\rho=\rho^*$ and $t=1/2$, it seems sensible to build an estimator based on $ A^{\rho}_0(1/2)  + A^{\rho}_1(1/2) -1$.
\noindent Define the in-class accuracy estimators $\hat A^{\rho}_0(1/2)=\frac{1}{m}\sum_{i=1}^{m}  (1-\rho_{1/2}(X_i))$ and $\hat A^{\rho}_1(1/2)=\frac{1}{n}\sum_{j=1}^{n}  \rho_{1/2}(Y_j)$. It follows as in Proposition \ref{oraclelevel}, that:

\begin{proposition}\label{Binom2}
$\hat{\lambda}_{bayes}^{\rho}:=\hat A^{\rho}_0(1/2) + \hat A^{\rho}_1(1/2) -1 - q_{1-\alpha}\hat{\sigma}(1/2)$ with
\begin{equation}\label{rhohat}
 \hat{\sigma}(1/2)=  \sqrt{\frac{\hat A^{\rho}_0(1/2) (1 - \hat A^{\rho}_0(1/2))}{m} + \frac{\hat A^{\rho}_1(1/2) (1 -  \hat A^{\rho}_1(1/2)) }{n}}
\end{equation}
is an asymptotic HPLB of $\lambda$  (at level $\alpha$) for any $\rho: \mathcal{X} \rightarrow [0,1]$. 
\end{proposition}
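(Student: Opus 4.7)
The plan is to reduce this proposition to the same reasoning as in Proposition \ref{oraclelevel} via a simple bookkeeping identity. First, I would observe that $1 - \rho_{1/2}(X_i) = \Ind\{\rho(X_i) \leq 1/2\}$ and $\rho_{1/2}(Y_j) = 1 - \Ind\{\rho(Y_j) \leq 1/2\}$, so the definitions of $\hat{A}_0^{\rho}(1/2)$ and $\hat{A}_1^{\rho}(1/2)$ unpack to
\[
\hat{A}_0^{\rho}(1/2) + \hat{A}_1^{\rho}(1/2) - 1 \;=\; \hat{F}_m(1/2) - \hat{G}_n(1/2),
\]
and analogously $A_0^{\rho}(1/2) + A_1^{\rho}(1/2) - 1 = F(1/2) - G(1/2)$. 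In particular $\sigma(1/2)$ from \eqref{sigmat}, evaluated at the Bernoulli proportions $F(1/2)$ and $G(1/2)$, coincides with the true standard deviation of $\hat{A}_0^{\rho}(1/2) + \hat{A}_1^{\rho}(1/2)$, and $\hat{\sigma}(1/2)$ in \eqref{rhohat} is its natural plug-in estimate.

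Next, I would apply \eqref{Basicinequality} at $t=1/2$ to obtain $\lambda \geq F(1/2) - G(1/2)$, which gives the bound
\[
\mathbb{P}\bigl(\hat{\lambda}_{bayes}^{\rho} > \lambda\bigr) \;\leq\; \mathbb{P}\!\left( \hat{F}_m(1/2) - \hat{G}_n(1/2) - (F(1/2) - G(1/2)) \,>\, q_{1-\alpha}\,\hat{\sigma}(1/2) \right).
\]
Assuming $\sigma(1/2) > 0$, the CLT applied to the two independent averages of iid Bernoullis yields
\[
\frac{\hat{F}_m(1/2) - \hat{G}_n(1/2) - (F(1/2) - G(1/2))}{\sigma(1/2)} \xrightarrow{d} \mathcal{N}(0,1),
\]
while the weak law of large numbers together with the continuous mapping theorem (noting $m/N \to \pi \in (0,1)$) gives $\hat{\sigma}(1/2)/\sigma(1/2) \xrightarrow{p} 1$. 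Slutsky's theorem then implies that the left-hand side, with $\sigma(1/2)$ replaced by $\hat{\sigma}(1/2)$, also converges in distribution to a standard normal, so the right-hand probability above converges to $1 - \Phi(q_{1-\alpha}) = \alpha$. This is exactly the argument underpinning Proposition \ref{oraclelevel}, modified only by invoking Slutsky once to accommodate the estimated variance.

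The main obstacle I anticipate is the degenerate regime where $\sigma(1/2) = 0$, because there Slutsky cannot be invoked directly. This forces $F(1/2), G(1/2) \in \{0,1\}$, and I would handle the two sub-cases separately. If $F(1/2) = G(1/2)$, then $\hat{F}_m(1/2)$ and $\hat{G}_n(1/2)$ are almost surely equal constants, so $\hat{\sigma}(1/2) = 0$ and $\hat{\lambda}_{bayes}^{\rho} = 0 \leq \lambda$ deterministically; if $F(1/2) \neq G(1/2)$, then $|F(1/2) - G(1/2)| = 1$ forces $\lambda = 1$, and the bound is trivial because $\hat{\lambda}_{bayes}^{\rho} \leq \hat{A}_0^{\rho}(1/2) + \hat{A}_1^{\rho}(1/2) - 1 \leq 1$ almost surely. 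In both sub-cases $\mathbb{P}(\hat{\lambda}_{bayes}^{\rho} > \lambda) = 0$, so the asymptotic HPLB guarantee holds a fortiori.
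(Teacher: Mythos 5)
Your proposal is correct and follows essentially the same route as the paper, whose proof consists precisely of the remark that Proposition \ref{Binom2} follows from Proposition \ref{oraclelevel} by replacing $\sigma(1/2)$ with the consistent estimator $\hat{\sigma}(1/2)$ (i.e.\ the CLT argument plus Slutsky). Your explicit bookkeeping identity $\hat A^{\rho}_0(1/2)+\hat A^{\rho}_1(1/2)-1=\hat F_m(1/2)-\hat G_n(1/2)$ and your separate treatment of the degenerate case $\sigma(1/2)=0$ simply spell out details that the paper delegates to Proposition \ref{oraclelevel} and Lemma \ref{ignorebadcase}, so there is no substantive difference in approach.
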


It should be noted that if $\hat{\sigma}(1/2)$ in $\hat{\lambda}_{bayes}^{\rho}$ is replaced by $\sigma(1/2)$ in \eqref{sigmat}, we obtain $\hat{\lambda}^{\rho}(1/2)$. Consequently, it should be the case that if $t_N^*=1/2$, the rate attained by $\hat{\lambda}_{bayes}^{\rho}$ is the oracle rate. We now demonstrate this in an example:

\begin{example}\label{Example_1}
Compare a given distribution $Q$ with the mixture $P=(1-\delta_N)Q + \delta_N C$, where $C$ serves as a ``contamination'' distribution and $\delta_N \in (0,1)$. Then, $\TV(P,Q)=\delta_N \TV(C, Q).$ If we furthermore assume that $Q$ and $C$ are disjoint, then $\TV(C, Q)=1$ and $\lambda_N=\delta_N$. Then the oracle rate $\underline{\gamma}^{oracle}(\varepsilon)$ and $\underline{\gamma}^{\hat{\lambda}_{bayes}}(\varepsilon)$ coincide:

\begin{restatable}{proposition}{basicpowerresultBinomial}\label{basicpowerresultBinomial}
For the setting of Example \ref{Example_1}, $ \underline{\gamma}^{\hat{\lambda}_{bayes}}(\varepsilon)= \underline{\gamma}^{oracle}(\varepsilon)=-1$, for all $\varepsilon \in (0,1]$.
\end{restatable}

\noindent Indeed, it can be shown that here $\hat{\lambda}^{\rho^*}(1/2)$ gives rise to the oracle estimator in \eqref{oraclestimator}. It may therefore not be surprising that $\hat{\lambda}_{bayes}^{\rho^*}$ attains the rate $\underline \gamma^{oracle}(\varepsilon)$. A small simulation study illustrating Proposition \ref{basicpowerresultBinomial} is given in Figure \ref{asymptoticpowerfig1} in \ref{simsec_1}.
\end{example}


\noindent While $\hat{\lambda}_{bayes}^{\rho}$ is able to achieve the oracle rate in some situations, it may be improved: Taking a cutoff of 1/2, while sensible if no prior knowledge is available, is sometimes suboptimal. This is true, even if $\rho^*$ is used, as we demonstrate with the following example:

\begin{example}\label{Example_2}
Define $P$ and $Q$ by $P=p_1 C_1 + (1-p_1) p_2 P_0 + (1-p_1) (1-p_2) Q_0$ and $Q=p_1 C_2 + (1-p_1) p_2 Q_0 + (1-p_1) (1-p_2) P_0$, where $C_1, C_2, P_0, Q_0$ are probability measures with disjoint support and $p_1, p_2 \in (0,1)$.

\begin{proposition}\label{Example2prop}
For the setting of Example \ref{Example_2}, let $p_2 > 0.5$, $p_2 = 0.5 + o(N^{-1})$. Then the oracle rate is $\underline\gamma^{\text{oracle}}(\varepsilon) = -1$, while $\hat{\lambda}_{bayes}^{\rho^*}$ attains the rate $\underline\gamma^{\hat{\lambda}_{bayes}}(\varepsilon)=-1/2$, for all $\varepsilon \in (0,1]$.
\end{proposition}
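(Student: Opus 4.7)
The plan is to exploit the disjoint-support structure of Example \ref{Example_2} to reduce everything to a four-valued statistic. Using disjointness, $\rho^{*} = g/(f+g)$ equals $0$ on $\mathrm{supp}(C_1)$, $1-p_2$ on $\mathrm{supp}(P_0)$, $p_2$ on $\mathrm{supp}(Q_0)$, and $1$ on $\mathrm{supp}(C_2)$, so $F = \rho^{*}_{\#} P$ and $G = \rho^{*}_{\#} Q$ are discrete with atoms at $\{0, 1-p_2, p_2, 1\}$. A direct integration of $|f-g|$ over the four pieces yields $\lambda_N = p_1 + (1-p_1)(2p_2 - 1)$, which under $p_2 = 1/2 + o(N^{-1})$ simplifies to $\lambda_N = p_1 + o(N^{-1})$; for $\lambda_N \asymp N^{\gamma}$ with $\gamma \in (-1,0)$, the mixing weight $p_1 = p_1(N)$ must therefore satisfy $p_1 \asymp N^{\gamma}$. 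I will then evaluate the two estimators at explicit cutoffs and match their rates to this regime.

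For the oracle I would take the constant sequence $t_N^{*} = 0$, which isolates the $C_1$-atom: one has $F(0) - G(0) = p_1$ and $\sigma(0)^2 = p_1(1-p_1)/m = \Theta(p_1/N)$, so
\[
\hat{\lambda}^{\rho^{*}}(0) = p_1 + O_P(\sqrt{p_1/N}) - q_{1-\alpha}\,\Theta(\sqrt{p_1/N}).
\]
Whenever $p_1 \gg 1/N$ the signal dominates both corrections, hence $\mathbb{P}(\hat{\lambda}^{\rho^{*}}(0) > (1-\varepsilon)\lambda_N) \to 1$ for every $\varepsilon \in (0,1]$, yielding $\underline{\gamma}^{\text{oracle}}(\varepsilon) \leq -1$. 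For the matching lower bound I would exploit that $F - G$ is piecewise constant on the four intervals determined by $\{0, 1-p_2, p_2, 1\}$: for any sequence $t_N$, $|F(t_N) - G(t_N)|$ is a sum of contributions $p_1$ (from $C$-atoms below $t_N$) and $(1-p_1)(2p_2 - 1) = o(N^{-1})$ (from $P_0/Q_0$-atoms), while $\hat F_m(t_N) - \hat G_n(t_N)$ has standard deviation at least $\Omega(N^{-1/2})$ on atoms with mass bounded away from $0$ and $1$. When $p_1 = o(N^{-1})$ the entire signal is $o(N^{-1})$ and dominated by noise, ruling out any $\gamma < -1$.

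For the bayes estimator, eventually $1-p_2 < 1/2 < p_2$, so $t=1/2$ separates the two inner atoms from the outer ones, which gives $F(1/2) - G(1/2) = \lambda_N$ exactly while $F(1/2), G(1/2) \to 1/2$. Consequently
\[
\sigma(1/2)^2 = \frac{F(1/2)(1-F(1/2))}{m} + \frac{G(1/2)(1-G(1/2))}{n} \sim \frac{1}{4}\Big(\frac{1}{m} + \frac{1}{n}\Big) \asymp \frac{1}{N},
\]
independent of $\lambda_N$. Combining $\hat{\sigma}(1/2) = \sigma(1/2)(1 + o_P(1))$ with the Lindeberg--L\'evy CLT applied to $\hat F_m(1/2) - \hat G_n(1/2)$ gives $\hat{\lambda}_{bayes}^{\rho^{*}} = \lambda_N + O_P(N^{-1/2}) - q_{1-\alpha}\,\Theta(N^{-1/2})$, so consistency holds iff $\lambda_N \gg N^{-1/2}$, i.e.\ iff $\gamma > -1/2$. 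For $\gamma < -1/2$ the deterministic penalty drives $\hat{\lambda}_{bayes}^{\rho^{*}}$ negative with probability tending to one, pinning down $\underline{\gamma}^{\hat{\lambda}_{bayes}}(\varepsilon) = -1/2$.

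The main obstacle I anticipate is the clean formalization of the lower bound for the oracle rate: one must rule out adaptive sequences $t_N$ that combine the $C$-atoms with the $P_0/Q_0$-atoms in a way that improves on rate $-1$. However, because $F - G$ is piecewise constant on the four fixed intervals, this reduces to a short case analysis on which subset of atoms lies below $t_N$, and within each case a direct signal-to-noise comparison forecloses rates below $-1$.
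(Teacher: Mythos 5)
Your proposal is correct and follows essentially the same route as the paper's proof: the identical computation $\lambda_N = p_1 + (1-p_1)(2p_2-1)\asymp p_1$, the cutoff $t_N=0$ isolating the $C_1$-atom (so $F(0)-G(0)=p_1$, $A_1(0)=1$, $\sigma(0)\asymp\sqrt{p_1/N}$) to get the oracle rate $-1$, and the cutoff $1/2$ with both in-class accuracies tending to $1/2$ (so $\sigma(1/2)\asymp N^{-1/2}$) to get $\underline\gamma^{\hat{\lambda}_{bayes}}=-1/2$; the paper simply channels these same facts through conditions \eqref{CondII}--\eqref{CondI}, Proposition \ref{oracleprop} and Corollary \ref{nofastratebinomial} rather than a direct CLT/signal-to-noise argument. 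Two minor points: for $\gamma<-1/2$ the estimator $\hat{\lambda}_{bayes}^{\rho^*}$ falls below $(1-\varepsilon)\lambda_N$ with probability tending to $1-\alpha$, not to one (the exceedance probability tends to $\alpha<1$, which is all that is needed to rule out rates below $-1/2$), and your anticipated obstacle of excluding oracle rates below $-1$ is moot, since rates are by definition restricted to $[-1,0)$ and the local alternatives satisfy $\gamma>-1$.
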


\noindent It can be shown that choosing $t_N=0$ for all $N$, leads to the oracle rate of $-1$ in this example. This is entirely missed by $\hat{\lambda}_{bayes}^{\rho^*}$.

\end{example}

Importantly, $\hat{\lambda}_{bayes}^{\rho^*}$ could still attain the oracle rate in Example \ref{Example_2}, if the cutoff of $1/2$ was adapted. In particular, using $\hat{\lambda}_{bayes}^{\rho^*}$ with the decision rule $\rho^*_0(z)=\Ind{\{ \rho^*(z) > 0 \}}$ would identify only the examples drawn from $C_1$ as belonging to class $0$. This in turn, would lead to the desired detection rate. Naturally, this cutoff requires prior knowledge about the problem at hand, which is usually not available. In general, if $\rho$ is any measurable function, potentially obtained by training a classifier or regression function on independent data, a cutoff of 1/2 might be strongly suboptimal. We thus turn our attention to an HPLB of the supremum in \eqref{Basicinequality} directly.

\begin{figure}
\begin{subfigure}[b]{0.5\textwidth}
  \centering
\begin{tikzpicture}[scale=0.8,
    declare function={unipdf(\x,\xl,\xu)= (\x>\xl)*(\x<\xu)*1/(\xu-\xl);}
]
\begin{axis}[
    samples=100,
    const plot mark mid,
    ymin=0,ymax=0.6
]
\pgfmathsetmacro{\ptwo}{0.55}
\addplot [ thick, orange] {\ptwo*unipdf(x,-1,0) + (1-\ptwo)*unipdf(x,0,1)};
\addplot [dashed, thick, cyan] {(1-\ptwo)*unipdf(x,-1,0) + \ptwo*unipdf(x,0,1)};
\end{axis}
\end{tikzpicture}

\end{subfigure}
\begin{subfigure}[b]{0.5\textwidth}
  \centering
\begin{tikzpicture}[ scale=0.8,
    declare function={unipdf(\x,\xl,\xu)= (\x>\xl)*(\x<\xu)*1/(\xu-\xl);}
]
\begin{axis}[
    samples=100,
    const plot mark mid,
    ymin=0,ymax=0.6
]
\pgfmathsetmacro{\pone}{0.1}
\pgfmathsetmacro{\ptwo}{0.55}
\addplot [ thick, orange] { \pone*unipdf(x,-3,-2)  + \ptwo*(1-\pone)*unipdf(x,-1,0) + (1-\ptwo)*(1-\pone)*unipdf(x,0,1)};
\addplot [dashed, thick, cyan] {\pone*unipdf(x,2,3) + (1-\ptwo)*(1-\pone)*unipdf(x,-1,0) + \ptwo*(1-\pone)*unipdf(x,0,1)};
\end{axis}
\end{tikzpicture}

\end{subfigure}
\caption{ Illustration of Examples \ref{Example0} and \ref{Example_2}. Left: Illustration of Example \ref{Example0} using $p_N=0.55$. Right: Illustration of Example \ref{Example_2} using $p_1=0.1$, $p_2=0.55$ and uniform distributions for $C_1$, $C_2$, $P_0$ and $Q_0$.}
\label{fig:illustration0}
\end{figure}

\subsection{Adaptive binary classification bound} \label{TVsearchec}

In light of relation \eqref{Basicinequality}, we aim to directly account for the randomness of $\sup_t(\hat{F}_m(t)- \hat{G}_n(t))=\sup_{z} (\hat{F}_m(\rho_{(z)})- \hat{G}_n(\rho_{(z)}))$. We follow \cite{finner2018} and define the counting function $V_{m,z}=m\hat{F}_{m}(\rho_{(z)})$ for each $z \in  J_{m,n}:=\{1,\ldots, m+n-1 \}$. Using $m\hat{F}_{m}(\rho_{(z)}) + n\hat{G}_{n}(\rho_{(z)})=z$, it is possible to write:
\begin{equation}\label{Vfunction}
\hat{F}_{m}(\rho_{(z)}) - \hat{G}_{n}(\rho_{(z)})= \frac{m+n}{mn} \left( V_{m,z} - \frac{m z}{m+n} \right).
\end{equation}
A well-know fact (see e.g., \cite{finner2018}) is that under $H_0: F=G$, $V_{m,z}$ is a hypergeometric random variable, obtained by drawing without replacement $z$ times from an urn that contains $m$ circles and $n$ squares and counting the number of circles drawn. We denote this as $V_{m,z} \sim \mbox{Hypergeometric}(z, m+n, m)$ and simply refer to the resulting process $z \mapsto V_{m,z}$ as the \emph{hypergeometric process}. Though the distribution of $V_{m,z}$ under a general alternative is not known, we will now demonstrate that one can nonetheless control its behavior, at least asymptotically. We start with the following definition, inspired by \cite{Nicolai2005}:

\begin{definition}[Bounding function]\label{Qfuncdef}
A function $J_{m,n} \times [0,1]  \ni (z, \tilde{\lambda})  \mapsto Q_{m,n, \alpha}(z, \tilde{\lambda})$ is called a \emph{bounding function} at level $\alpha$ if
\begin{align}\label{Qfuncdefeq}
       \mathbb{P}(\sup_{z \in J_{m,n}} \left[  V_{m,z} -  Q_{m,n, \alpha}(z,\lambda)\right] > 0) \leq \alpha.
\end{align}
It will be called an \emph{asymptotic bounding function} at level $\alpha$ if instead
\begin{align}\label{Qfuncdefeqa}
       \limsup_{N \to \infty}  \mathbb{P}(\sup_{z \in J_{m,n}} \left[  V_{m,z} -  Q_{m,n, \alpha}(z,\lambda)\right] > 0) \leq \alpha.
\end{align}
\end{definition}

\noindent In other words, for the true value $\lambda$, $Q_{m,n, \alpha}(z,\lambda)$ provides an (asymptotic) type 1 error control for the process $z \mapsto V_{m,z}$ (often the dependence on $\alpha$ will be ommited). For $\lambda=0$ the theory in \cite{finner2018} shows that such an asymptotic bounding function is given by

$$Q_{m,n, \alpha}(z, \tilde{\lambda})=Q_{m,n, \alpha}(z, 0)=\frac{z m}{m+ n} +  \beta_{\alpha,m} w\left(z,m,n\right), $$ 

with
\begin{equation}\label{Hypervariance}
    w\left(z,m,n\right) = \sqrt{\frac{m}{N} \frac{n}{N} \frac{N-z}{N-1}z }.
\end{equation}

Assuming access to a bounding function, we can define the estimator presented in Proposition \ref{TVsearch}. 


\begin{restatable}{proposition}{witsearch}\label{witsearch}\label{TVsearch}
Let $Q_{m,n,\alpha}$ be an (asymptotic) bounding function and define,
 \begin{align}
\hat{\lambda}^{\rho} &=\inf \left \{\tilde{\lambda} \in [0,1]: \sup_{z \in J_{m,n}} \left[  V_{m,z} -  Q_{m,n, \alpha}(z,\tilde{\lambda}) \right] \leq 0 \right \}. \label{TVsearcheq0}
\end{align}
Then $\hat{\lambda}^{\rho}$ is an (asymptotic) HPLB of $\lambda$ (at level $\alpha$) for any $\rho: \X \to I$.
\end{restatable}

\noindent The proof of Proposition \ref{TVsearch} is given in \ref{proofsection1}.

Since we do not know the true $\lambda$, the main challenge in the following is to find bounding functions that would be valid for any potential $\lambda \geq 0$. We now introduce a particular type of such a bounding function. With $\tilde{\lambda} \in [0,1]$, $\alpha \in (0,1)$, $m(\tilde{\lambda})= m - q_{1-\frac{\alpha}{3}}(\tilde{\lambda},m) $, and $n(\tilde{\lambda}) = n - q_{1-\frac{\alpha}{3}}(\tilde{\lambda},n)$,
we define 
\begin{align}\label{TVsearcheqgeneral}
   &Q_{m,n, \alpha}(z,\tilde{\lambda})= \begin{cases}
    z, \text{ if } 1 \leq z \leq q_{1-\frac{\alpha}{3}}(\tilde{\lambda},m)\\
    m, \text{ if } m+n(\tilde{\lambda})\leq z \leq m+n\\ 
  q_{1-\frac{\alpha}{3}}(\tilde{\lambda},m) + \sq_{\alpha/3}\left(\tilde{V}_{m(\tilde{\lambda}),z}, z \in \{1,\ldots, m(\tilde{\lambda})+ n(\tilde{\lambda}) -1\}  \right),  \text{ otherwise}
    \end{cases} 
\end{align}
where $\tilde{V}_{m,z}$ denotes the counting function of a hypergeometric process and $\sq_{\alpha}\left(\tilde{V}_{m,z}, z \in J \right)$ is a \emph{simultaneous} confidence band, such that
\begin{align}\label{newcondition}
    \limsup_{N \to \infty} \Prob \left( \sup_{z \in J} \left[\tilde{V}_{m,z} - \sq_{\alpha}\left(\tilde{V}_{m,z}, z \in J \right)  \right] > 0 \right) \leq \alpha.
\end{align}

\noindent Note that Equation \eqref{newcondition} includes the case $\Prob \left( \sup_{z \in J} \left[\tilde{V}_{m,z} - \sq_{\alpha}\left(\tilde{V}_{m,z}, z \in J \right)  \right] > 0 \right) \leq \alpha$ for all $N$. Depending on which condition is true, we obtain a bounding function or an asymptotic bounding function:

\begin{restatable}{proposition}{Qfunctions}\label{Qfunctions}
$Q_{m,n, \alpha}$ as defined in  \eqref{TVsearcheqgeneral} is an (asymptotic) bounding function.
\end{restatable}

\noindent The proof of Proposition \ref{Qfunctions} is given in ~\ref{proofsection1}.

A valid analytical expression for $\sq_{\alpha/3}\left(\tilde{V}_{m(\tilde{\lambda}),z}, z \in \{1,\ldots, m(\tilde{\lambda})+ n(\tilde{\lambda}) -1 \}  \right)$ in \eqref{TVsearcheqgeneral} based on the theory in \cite{finner2018} is given in Equation~\eqref{finnerconfidenceband} of ~\ref{boundingexpressionsec}. We will denote the asymptotic bounding function when combining \eqref{TVsearcheqgeneral} with \eqref{finnerconfidenceband} by $Q^{A}$. The asymptotic HPLB that arises from \eqref{TVsearcheq0} with projection $\rho$ and bounding function $Q^A$ will be referred to as $\hat{\lambda}^{\rho}_{adapt}$. Alternatively, we may choose $\sq_{\alpha/3}\left(\tilde{V}_{m(\tilde{\lambda}),z}, z \in \{1,\ldots, m(\tilde{\lambda})+ n(\tilde{\lambda}) -1 \}  \right)$ by simply simulating $S$ times from the process $\tilde{V}_{m(\tilde{\lambda}),z}, z=1,\ldots, m(\tilde{\lambda})+ n(\tilde{\lambda})$. For $S \to \infty$, condition \eqref{newcondition} then clearly holds true. This is especially important, for smaller sample sizes, where the (asymptotic) $Q^{A}$ could be a potentially bad approximation.

We close this section by considering once again the introductory example in Section~\ref{toyexamplesec}. Our two proposed estimators applied to this example give $\hat{\lambda}^{\rho}_{bayes}=0$ and $\hat{\lambda}^{\rho}_{adapt}=0.0022$. Thus, as one would expect from the permutation test results, $\hat{\lambda}^{\rho}_{adapt}$ is able to detect a difference, whereas $\hat{\lambda}^{\rho}_{bayes}$ is not. While it is difficult in this case to determine the true $\lambda$, we can show for another example, that $\hat{\lambda}^{\rho^*}_{adapt}$ attains the rate $\hat{\lambda}^{\rho^*}_{bayes}$ could not:

\begin{proposition}\label{Example3prop}
Let $P,Q$ be defined as in Example \ref{Example_2} with $p_2 > 0.5$, $p_2 = 0.5 + o(N^{-1})$. Then $\underline\gamma^{\hat{\lambda}_{adapt}}(\varepsilon)=\underline\gamma^{oracle}(\varepsilon)=-1$, independent of $\varepsilon$.
\end{proposition}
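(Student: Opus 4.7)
The plan is to exploit the inverse-type characterization implicit in \eqref{TVsearcheq0}: $\hat{\lambda}_{adapt}^{\rho^*} > \tilde{\lambda}$ holds iff there exists some $z^* \in J_{m,n}$ with $V_{m,z^*} > Q^A_{m,n,\alpha}(z^*, \tilde{\lambda})$. Since $\underline{\gamma}^{\hat{\lambda}}(\varepsilon) \geq -1$ always holds by definition and Proposition \ref{Example2prop} already gives $\underline{\gamma}^{\text{oracle}}(\varepsilon) = -1$, it suffices to show: for every $\varepsilon \in (0,1]$ and every $\gamma > -1$ with $\lambda_N \asymp N^{\gamma}$, there is a (possibly random) index $z^*$ at which $V_{m,z^*} > Q^A_{m,n,\alpha}(z^*, (1-\varepsilon)\lambda_N)$ with probability tending to $1$.

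The natural choice exploits the disjoint-support structure. Since $\rho^*(z) = g(z)/(f(z)+g(z))$ equals $0$ precisely on the support of $C_1$, which carries $P$-mass $p_1$ and $Q$-mass $0$, the random count $K := |\{i : \rho^*(Z_i) = 0\}|$ follows $\text{Binomial}(m, p_1)$ and each such index corresponds to an $X$-observation from $C_1$. Under the assumption $p_2 = 1/2 + o(N^{-1})$, a direct computation yields $\lambda_N = p_1 + (1-p_1)(2p_2 - 1) = p_1(1 + o(1))$, so $p_1 \asymp N^\gamma$ and $p_1 m \asymp N^{\gamma+1} \to \infty$ for $\gamma > -1$. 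Taking $z^* = K$, the $K$ smallest values in the pooled order statistic are all tied at $\rho = 0$ and come from the $X$-sample, hence $V_{m,z^*} = K$ almost surely.

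To bound $Q^A(z^*, (1-\varepsilon)\lambda_N)$, the binomial quantile satisfies $q_{1-\alpha/3}((1-\varepsilon)\lambda_N, m) = (1-\varepsilon) p_1 m + O(\sqrt{p_1 m})$ by Berry--Esseen, while $K = p_1 m + O_P(\sqrt{p_1 m})$ by standard binomial concentration. Hence $K - q_{1-\alpha/3}((1-\varepsilon)\lambda_N, m) = \varepsilon p_1 m(1 + o_P(1)) \to \infty$, placing $z^* = K$ in the ``otherwise'' case of \eqref{TVsearcheqgeneral}, so $Q^A(K, (1-\varepsilon)\lambda_N) = q_{1-\alpha/3}((1-\varepsilon)\lambda_N, m) + \sq_{\alpha/3}$. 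At the index $z = K \ll N$, the variance scale \eqref{Hypervariance} gives $w \asymp \sqrt{p_1 m}$, and the explicit expression \eqref{finnerconfidenceband} then yields $\sq_{\alpha/3} = O(\sqrt{p_1 m}\cdot \mathrm{polylog}(N)) = o(p_1 m)$. Combining, $V_{m,z^*} - Q^A(z^*, (1-\varepsilon)\lambda_N) = \varepsilon p_1 m - o(p_1 m) > 0$ with probability tending to $1$, as required.

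The main technical obstacle is a sufficiently sharp bound on the simultaneous band $\sq_{\alpha/3}$ at the boundary-type index $z^* \ll N$ to ensure $\sq_{\alpha/3} = o(p_1 m)$ even when $p_1 m$ grows only as a small positive power of $N$; this hinges on the explicit asymptotics in \cite{finner2018} encoded by \eqref{finnerconfidenceband} (or the built-in guarantee \eqref{newcondition} for whichever simultaneous band is chosen). The remaining ingredients---Berry--Esseen for the binomial quantile, binomial concentration for $K$, and the ``ordering'' observation isolating the $C_1$ support---are standard.
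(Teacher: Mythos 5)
Your route is genuinely different from the paper's: the paper proves the $-1$ rate for $\hat{\lambda}_{adapt}$ indirectly, by checking the oracle conditions \eqref{CondII} and \eqref{CondI} for the fixed cutoff $t_N=0$ (using $A_0^{\rho^*}(0)=p_1\asymp\lambda_N$, $A_1^{\rho^*}(0)=1$, so $\lambda_N/\sigma(0)\asymp\sqrt{m\lambda_N}\to\infty$) and then invoking the general adaptivity result, Proposition \ref{newamazingresult}; you instead attack the estimator directly, exhibiting the witness index $z^*=K$ (the $\mathrm{Binomial}(m,p_1)$ count of observations on the support of $C_1$, where $\rho^*=0$) and showing the process overshoots the bounding function there. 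That is a legitimate and more self-contained strategy for this example, and the oracle half of the claim is correctly delegated to Proposition \ref{Example2prop}.

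However, two steps as written have real gaps. First, the opening ``iff'' is not what \eqref{TVsearcheq0} gives you: since $\hat{\lambda}_{adapt}$ is an infimum over $\tilde{\lambda}$, concluding $\hat{\lambda}_{adapt}>(1-\varepsilon)\lambda_N$ requires an overshoot $V_{m,z}>Q^A_{m,n,\alpha}(z,\tilde{\lambda})$ for \emph{every} $\tilde{\lambda}\in[0,(1-\varepsilon)\lambda_N]$ (this is exactly why the paper's proof of Proposition \ref{newamazingresult} works with $\sup_{\tilde{\lambda}}Q_{m,n,\alpha}(z(t_N),\tilde{\lambda})$ via Lemma \ref{simplified}); your single evaluation at $\tilde{\lambda}=(1-\varepsilon)\lambda_N$ only suffices if you additionally prove monotonicity of $Q^A$ in $\tilde{\lambda}$ or run the bound uniformly, which you do not do (here it is easy to add, since $q_{1-\alpha/3}(\tilde{\lambda},m)\le q_{1-\alpha/3}((1-\varepsilon)\lambda_N,m)$, but it must be said). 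Second, the estimate $\sq_{\alpha/3}=O(\sqrt{p_1 m}\cdot\mathrm{polylog}(N))=o(p_1 m)$ misreads \eqref{finnerconfidenceband}: $\sq_{\alpha/3}$ is an upper confidence band for the hypergeometric \emph{count itself}, so it contains the drift term $\bigl(z-q_{1-\alpha/3}(\tilde{\lambda},m)\bigr)\,m(\tilde{\lambda})/\bigl(m(\tilde{\lambda})+n(\tilde{\lambda})\bigr)$, which at $z^*=K$ is of exact order $\pi\,\varepsilon\, p_1 m$, not lower order. Your claimed margin $\varepsilon p_1 m-o(p_1 m)$ is therefore wrong; the correct margin is $\varepsilon(1-\pi)p_1 m-o(p_1 m)$, which is still positive (recall $m\le n$, so $\pi\le 1/2$), so the conclusion survives, but the computation needs this correction, and together with the uniformity-in-$\tilde{\lambda}$ repair the argument is only then a complete proof.
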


A small simulation study illustrating Propositions \ref{Example2prop} and \ref{Example3prop} is given in Figure \ref{asymptoticpowerfig1} in \ref{simsec_1}. In the next section, we generalize the results in Examples \ref{Example_1} and \ref{Example_2} and show that $\hat{\lambda}_{adapt}^{\rho}$ \emph{always} attains the oracle rate.

\section{Theoretical guarantees} \label{powersec}


This section studies some of the theoretical properties of our proposed lower-bounds. We start in Section \ref{rhostarsec} by assuming access to the ``ideal'' classifier $\rho^*$ and show that in this case, the $\hat{\lambda}^{\rho^*}_{adapt}$ can asymptotically detect a nonzero TV with a better rate than $\hat{\lambda}^{\rho^*}_{bayes}$. More generally, our main results in Proposition \ref{oracleprop} and \ref{newamazingresult} show that $\hat{\lambda}^{\rho^*}_{adapt}$ achieves the same asymptotic performance as $\hat{\lambda}^{\rho^*}$, which is free to ``choose'' a sequence of cutoffs $(t_N)_N$. Though we use $\rho^*$ for simplicity, all of the results in this section also hold true for any arbitrary (fixed) $\rho: \mathcal{X} \to [0,1]$, and also if we replace $\lambda$ by the TV distance on the projection,
\begin{align}\label{rhoonprojectedspace}
    \lambda(\rho) =\sup_{t \in [0,1]} \left[ A_0^{\rho}(t) + A_1^{\rho}(t) \right] -1 := \sup_{t \in [0,1]} \left[ P(\rho(X) \leq t ) - Q(\rho(Y) \leq t )\right]
\end{align}
such that $\lambda:=\lambda(\rho^*)=TV(P,Q)$.

Section \ref{estimatedrho} then extends the main result of Section \ref{rhostarsec} from $\rho^*$ to a sequence $\hat{\rho}=\hat{\rho}_N$, estimated on independent training data, showing that $\hat{\lambda}^{\hat{\rho}}_{adapt}$ and $\hat{\lambda}^{\hat{\rho}}$ have the same asymptotic detection power. Finally, we discuss sufficient conditions for the consistency of $\hat{\lambda}^{\hat{\rho}}_{adapt}$. We restrict to $I=[0,1]$ throughout this section.


\subsection{Using $\rho^*$} \label{rhostarsec}

We start by studying the asymptotic properties of the proposed asymptotic HPLB estimators, assuming access to $\rho^*$ in \eqref{Bayesprobability}. Recall that for a fixed $\varepsilon \in (0,1]$, $\underline \gamma^{oracle}(\varepsilon)$ was defined as the minimal rate such that for all $\gamma > \underline\gamma^{oracle}(\varepsilon)$ there exists a sequence $(t_N)_{N\geq1} \subset I$ such that \eqref{overalreq}, i.e.
\begin{equation*}
    \mathbb{P}(\hat{\lambda} > (1-\varepsilon) \lambda) \to 1,
\end{equation*}
is true for $\hat{\lambda}=\hat{\lambda}^{\rho^*}(t_N)$. Consider for $\varepsilon \in (0,1]$ the following conditions on $(t_N)_{N \geq 1}$:
\begin{align}
     \liminf_{N \to \infty} \frac{\lambda(t_N)}{\lambda_N}  & \geq  1-\varepsilon \label{CondII},
     \end{align}
and
\begin{subequations}\label{CondI}
\begin{align}
  \lim_{N } \frac{\lambda_N}{\sigma(t_N)} &= \infty, \text{ if }  \liminf_{N \to \infty} \frac{\lambda(t_N)}{\lambda_N} >  1-\varepsilon  \label{CondI.1}, \\ 
         \lim_{N } \frac{\lambda_N}{\sigma(t_N)} \left( \frac{\lambda(t_N)}{\lambda_N} -( 1-\varepsilon) \right) &= \infty, \text{ if }  \liminf_{N \to \infty} \frac{\lambda(t_N)}{\lambda_N} =  1-\varepsilon   \label{CondI.2},
       \end{align}
\end{subequations}
where $\lambda(t_N):=F(t_N) - G(t_N)$ and $\sigma(t)$ is defined as in \eqref{sigmat}. We then refer to Condition \eqref{CondI}, iff \eqref{CondI.1} and \eqref{CondI.2} are true:
\begin{align}
    \eqref{CondI.1} \text{ and } \eqref{CondI.2} \text{ hold}.\tag{21}
\end{align}
We now redefine $\underline\gamma^{oracle}(\varepsilon)$ as the smallest element of $[0,1]$ with the property that for all $\gamma > \underline\gamma^{oracle}(\varepsilon)$ there exists a sequence $(t_N)_{N\geq1} \subset I $ such that \eqref{CondII} and \eqref{CondI} are true. Intuitively, this means that a given rate is achieved for $(t_N)_{N\geq1}$ if either $F(t_N) - G(t_N)$ is strictly larger than $(1-\varepsilon)\lambda_N$ and the variance decreases fast relative to $\lambda_N$ (Condition \eqref{CondII} and \eqref{CondI.1}), or $F(t_N) - G(t_N)$ is exactly equal to $(1-\varepsilon)\lambda_N$ in the limit, which needs to be balanced by an even faster decrease in the variance $\sigma(t_N)$ (Condition \eqref{CondII} and \eqref{CondI.2}). As a side remark, \eqref{CondI.2} is problematic for $\varepsilon=1$, if $\sigma(t_N)=0$ for infinitely many $N$. In this case, it should be understood that \eqref{CondI.2} is taken to be false.

The following proposition confirms that the two definitions of $\underline\gamma^{oracle}(\varepsilon)$ coincide:

\begin{restatable}{proposition}{oracleprop}\label{oracleprop}
Let $-1 < \gamma < 0 $ and $\varepsilon \in (0,1]$ fixed. Then there exists a $(t_N)_{N\geq1}$ such that \eqref{overalreq} is true for $\hat{\lambda}^{\rho^*}(t_N)$ iff there exists a $(t_N)_{N\geq1}$ such that \eqref{CondII} and \eqref{CondI} are true. 
\end{restatable}

If we consider a classifier with cutoff $t \in I$, $\rho_t(z)=\Ind\{\rho(z) > t \}$ and, as in Section \ref{Binonmethod}, define in-class accuracies $A_0(t):=A_0^{\rho^*}(t)$, $A_1(t):=A_1^{\rho^*}(t)$, we may rewrite $\sigma(t)$ in a convenient form
\begin{equation}\label{sigmat2}
\sigma(t)=\sqrt{\frac{A_0(t)(1-A_0(t))}{m} + \frac{A_1(t)(1-A_1(t))}{n}  }.
\end{equation}
Since $\sqrt{N} \lambda_N$ does not go to infinity for $\gamma \leq -1/2$, the divergence of the ratio in \eqref{CondI} is only achieved, if both $A_0(t_N)(1-A_0(t_N))$ and $A_1(t_N)(1-A_1(t_N))$ go to zero sufficiently fast. In our context, this is often more convenient to verify directly.

The binary classification estimator $\hat{\lambda}^{\rho^*}_{bayes}$ takes $t_N=1/2$ and, since $ F(1/2)-G_t(1/2)=\lambda_N$, \eqref{CondII} is true for any $\varepsilon$. Thus a given rate $\underline{\gamma}$ is achieved iff \eqref{CondI} is true for $t_N=1/2$. This is stated formally in the following corollary:

 \begin{restatable}{corollary}{nofastratebinomial}\label{nofastratebinomial}
  $\hat{\lambda}^{\rho^*}_{bayes}$ attains the rate $\underline\gamma^{\hat{\lambda}_{bayes}}(\varepsilon)=\underline{\gamma}$ for all $\varepsilon \in (0,1]$, iff \eqref{CondI} is true for $t_N=1/2$ and all $\gamma > \underline{\gamma}$.
 \end{restatable}


The proof is a direct consequence of Proposition \ref{oracleprop} and is given in \ref{proofStoDom}. We thus write $\underline \gamma^{\hat{\lambda}_{bayes}}$ instead of $\underline \gamma^{\hat{\lambda}_{bayes}}(\varepsilon)$. It should be noted \eqref{CondI} is always true for $\gamma > -1/2$. As such, $\underline \gamma^{\hat{\lambda}_{bayes}}\geq -1/2$ and only the case of $\underline{\gamma} < -1/2$ is interesting in Corollary \ref{nofastratebinomial}.

Finally, the adaptive binary classification estimator $\hat{\lambda}^{\rho^*}_{adapt}$ always reaches at least the rate $\underline{\gamma} = -1/2$. In fact, it turns out that it attains the oracle rate:

\begin{restatable}{proposition}{newamazingresult}\label{newamazingresult}
Let $-1 < \gamma < 0$ and $\varepsilon \in (0,1]$ fixed. Then \eqref{overalreq} is true for $\hat{\lambda}^{\rho^*}_{adapt}$ iff there exists a $(t_N)_{N\geq1}$ such  that \eqref{CondII} and \eqref{CondI} are true.
\end{restatable}

\noindent This immediately implies:
\begin{corollary}
For all $\varepsilon \in (0,1]$, $\underline\gamma^{oracle}(\varepsilon)=\underline\gamma^{\hat{\lambda}_{adapt}}(\varepsilon)$.
\end{corollary}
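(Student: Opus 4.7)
The plan is essentially a definitional unwinding, combining Proposition~\ref{newamazingresult} with the reformulated definition of $\underline\gamma^{oracle}(\varepsilon)$ given in the paragraph following~\eqref{CondI}. First, I would recall that, by that reformulation (which is justified by Proposition~\ref{oracleprop}), $\underline\gamma^{oracle}(\varepsilon)$ is the infimum of the set
\[
    \mathcal{G}(\varepsilon) := \Bigl\{ \gamma_0 \in [-1,0) :\ \text{for all } \gamma > \gamma_0 \text{ there exists } (t_N)_{N\geq 1} \subset I \text{ satisfying } \eqref{CondII} \text{ and } \eqref{CondI} \Bigr\}.
\]

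Next, I would unwind the definition of $\underline\gamma^{\hat{\lambda}_{adapt}}(\varepsilon)$: it is the infimum of those $\gamma_0 \in [-1,0)$ such that for every $\gamma > \gamma_0$ and every $\lambda_N \asymp N^{\gamma}$, we have $\mathbb{P}(\hat{\lambda}_{adapt}^{\rho^*} > (1-\varepsilon)\lambda_N) \to 1$, i.e.\ \eqref{overalreq} holds for $\hat{\lambda}=\hat{\lambda}_{adapt}^{\rho^*}$. Proposition~\ref{newamazingresult} states precisely that, for each fixed $\gamma \in (-1,0)$ and $\varepsilon \in (0,1]$, the condition \eqref{overalreq} for $\hat{\lambda}_{adapt}^{\rho^*}$ is equivalent to the existence of a sequence $(t_N)_{N\geq 1}$ satisfying \eqref{CondII} and \eqref{CondI}. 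Plugging this equivalence into the definition above shows that the defining set for $\underline\gamma^{\hat{\lambda}_{adapt}}(\varepsilon)$ coincides with $\mathcal{G}(\varepsilon)$ term by term.

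Taking infima of the identical sets then gives $\underline\gamma^{oracle}(\varepsilon)=\underline\gamma^{\hat{\lambda}_{adapt}}(\varepsilon)$, as required. There is no real obstacle here: all the analytic work is absorbed in Proposition~\ref{newamazingresult}, and the remaining step is the verification that ``$\gamma_0 \in \mathcal{G}(\varepsilon)$'' is literally the same membership condition in both definitions. The only mild care needed is to ensure the equivalence in Proposition~\ref{newamazingresult} is applied uniformly in $\gamma > \gamma_0$ (not just at a single $\gamma$), which is immediate because the proposition is stated for arbitrary fixed $\gamma \in (-1,0)$.
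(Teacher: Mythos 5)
Your proposal is correct and follows exactly the paper's route: the paper derives this corollary as an immediate consequence of Proposition \ref{newamazingresult} together with the (Proposition \ref{oracleprop}-justified) reformulation of $\underline\gamma^{oracle}(\varepsilon)$ via \eqref{CondII} and \eqref{CondI}, which is precisely your definitional unwinding.
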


The next section shows that this result can be generalized to $\hat{\rho}$ estimated from independent data.




\subsection{Estimated $\rho$}\label{estimatedrho}

In this section we assume that $\hat{\rho}$ is a ``probability function'' in $[0,1]$, estimated from data. In that case \emph{sample-splitting} should be used, i.e. the function $\hat{\rho}$ is estimated independently on a \emph{training set} using a learning algorithm which is then used to compute an (asymptotic) HPLB based on an independent \emph{test set}. Sample-splitting is important to avoid spurious correlation between $\rho$ and the (asymptotic) HPLB, not supported by our theory. Formally we assume,
\begin{itemize}
    \item[(E1)]  
     $\hat{\rho}=\hat{\rho}_{N_{tr}}$ is trained on a sample of size $N_{tr}$, $(Z_1, \ell_1), \ldots, (Z_{N_{tr}}, \ell_{N_{tr}})$, and evaluated on an independent sample $(Z_1, \ell_1), \ldots, (Z_{N_{te}}, \ell_{N_{te}})$, with $N_{tr} + N_{te}=N$,
    \item[(E2)] $N_{te}, N_{tr} \to \infty$, as $N \to \infty$, with $m_{te}/N_{te} \to \pi \in (0,1)$,
\end{itemize}
where as before $m_{j}$ denotes the number of draws from $P$ (with label $0$) and $n_{j}$ the number of draws from $Q$ (with label $1$), for $j \in \{te, tr \}$.

In practice, most probability estimates try to approximate the Bayes probability (see e.g., \citet{ptpr}):
\begin{align}\label{Bayesprobabilitytrue}
    \rho^B(z) = \frac{(1-\pi) g(z)}{\pi f(z) + (1-\pi) g(z)},
\end{align}
with \emph{Bayes classifier} $\rho^B_{1/2}(z)=\Ind\{ \rho^B(z) > 1/2 \}$. It is the classifier resulting in the maximal overall accuracy, denoted the \emph{Bayes accuracy}: $\pi A_{0}^{\rho^B}(1/2) + (1- \pi) A_{1}^{\rho^B}(1/2)$. Clearly, $\rho^B=\rho^*$ for $\pi=1/2$. More generally, it can be shown that $\rho_{1-\pi}^B(z)=\rho^*_{1/2}(z)$.


Let as before, $\hat{\lambda}^{\hat{\rho}}_{adapt}$  be the estimator obtained when using $\hat{\rho}$ and $\lambda(\hat{\rho})$ be defined as in \eqref{rhoonprojectedspace} for $\rho=\hat{\rho}$. Similarly, for a sequence $(t_N)_{N \geq 1} \subset I$, we define $\hat{\lambda}^{\hat{\rho}}(t_N)$ to be the theoretical estimator \eqref{oraclestimator_0} using $\hat{\rho}$. Conditioning on the training data through $\hat{\rho}$, allows for a generalization of the theory in Section \ref{rhostarsec} to estimated $\rho$. 

The first step, is to extend the theory in Section \ref{rhostarsec} to the case of arbitrary (nonrandom) sequences $(\rho_N)_{N \geq 1}$. While the proofs of the results in Section \ref{rhostarsec} are applicable almost one-to-one in this case, there is one issue arising from the estimator $\hat{\lambda}^{\rho}_{bayes}$. We exemplify this in the following:

\begin{example}\label{levelexample}
Assume both $X_1, \ldots, X_n$, $Y_1, \ldots, Y_n$ uniform on $[0,1]$ and $\rho_N$ such that for some $C > 0$,
\begin{align*}
    \rho_{N,1/2}(Z)= \begin{cases} 0, & \text{if } Z \in [0,C/n]\\
    1, & \text{else} \end{cases}
\end{align*}
Then:
\begin{restatable}{proposition}{Binomialmightnotconservelevel}\label{Binomialmightnotconservelevel}
For the setting of Example \ref{levelexample}, let $\xi_1$, $\xi_2$ be independently Poisson distributed, with mean $C$. Then
\[
\Prob(\hat{\lambda}^{\rho_N}(1/2) > 0 ) \to \Prob(\xi_1-\xi_2 >   q_{1-\alpha} \sqrt{2C} ).
\]
\end{restatable}
It can be shown numerically that $\Prob(\xi_1-\xi_2 >   q_{1-\alpha} \sqrt{2C} )> 1-\alpha$, for some $C$. Thus, $\hat{\lambda}^{\rho_N}(1/2)$ is not a valid asymptotic HPLB.
\end{example}

The case above appears rather exotic, and might not be realistic. What is more, we used $\hat{\lambda}^{\rho_N}(1/2)$ in the above example, with the true variance included, instead of $\hat{\lambda}^{\rho_N}_{bayes}$. In this case, the accuracies $A_0^{\rho_N}(1/2), A_1^{\rho_N}(1/2)$ cannot even be estimated reliably, so it is not clear what exactly will happen if $\sigma(1/2)$ is estimated. However none of these problems are of concern for $\hat{\lambda}_{adapt}^{\hat{\rho}}$, which conserves the level in any case:

\begin{restatable}{proposition}{HPLBwithestimatedrho}\label{HPLBwithestimatedrho}
Assume (E1) and (E2). Then $\hat{\lambda}^{\hat{\rho}}_{adapt}$ is an (asymptotic) HPLB of $\lambda$ (at level $\alpha$).
\end{restatable}


Thus, we will focus in this section only on the adaptive estimator. We first generalize Propositions \ref{oracleprop} and \ref{newamazingresult} to this case.


\begin{proposition}\label{estimatedrhoresult}
Let $-1 < \gamma \leq 0$ and $\varepsilon_1 \in (0,1]$ fixed. Assume that $\lambda_N=N_{te}^{\gamma}$ and that (E1) and (E2) hold. Then the following is equivalent
\begin{itemize}
    \item[(i)] there exists a sequence $(t_{N_{te}})_{N_{te} \geq 1}$ such that \eqref{overalreq} is true for $\hat{\lambda}^{\hat{\rho}}(t_{N_{te}})$,
    \item[(ii)] \eqref{overalreq} is true for $\hat{\lambda}^{\hat{\rho}}_{adapt}$.
\end{itemize}
\end{proposition}

The main message of Proposition \ref{estimatedrhoresult} is that for $\hat{\rho}$ estimated on independent training data, $\hat{\lambda}^{\hat{\rho}}_{adapt}$ still has the same asymptotic performance as an estimator that is free to choose its cutoff for any given sample size. And this holds despite the fact that even for $\lambda=0$, $\hat{\lambda}^{\hat{\rho}}_{adapt}$ is a valid HPLB, which is not clear for $\hat{\lambda}^{\hat{\rho}}(t_N)$, as seen in Example \ref{levelexample}.


In practice, one might be more interested under what conditions $\hat{\lambda}^{\hat{\rho}}_{adapt}$ is consistent for a fixed $\lambda$. To answer this question, we first restate consistency for a sequence of classifiers, assuming $\lambda$ does not change:

\begin{definition}\label{consistencydefinition}
A sequence of classifiers $\hat{\rho}_{N,t_N}=\hat{\rho}_{t_{N}}$, $N \in \N$, is \emph{consistent}, if 
\[
A_0^{\hat{\rho}_N}(t_N) +   A_1^{\hat{\rho}_N}(t_N) \stackrel{p}{\to}   A_0^{\rho^*}(1/2) +    A_1^{\rho^*}(1/2).
\]
\end{definition}

This is the standard definition of consistency, see e.g., \citet[Definition 6.1]{ptpr}, with two small modifications: We consider accuracies instead of classification errors and instead of the Bayes accuracy in the limit, we consider the equally weighted accuracy of $\rho^*$. As such the definition is a special case of the $\Psi$-consistency of \citet{NIPS2014_32b30a25}.

A simple consequence of Proposition \ref{estimatedrhoresult} is that a classifier that is consistent for the equally weighted sum of in-class errors, also leads to a consistent estimate of $\lambda$.

\begin{restatable}{corollary}{estimatedrhoresulttwo}\label{estimatedrhoresult2}
Assume that $\lambda$ is fixed and that there exists a sequence $(t_{N_{tr}})$, such that the sequence of classifiers $\hat{\rho}_{N_{tr},t_{N_{tr}}}$ is consistent. Then \eqref{overalreq} is true for $\hat{\lambda}^{\hat{\rho}}_{adapt}$, for all $\varepsilon > 0$.
\end{restatable}

In essence, for a given sequence of estimated $\hat{\rho}$, it is enough that there exists a sequence of cutoffs leading to a consistent classifier, for $\hat{\lambda}_{adapt}$ to be consistent. As is well-known (see e.g., \citet{ptpr} and \citet{NIPS2014_32b30a25}),

\begin{lemma}
Assume that $m_{tr}/N_{tr} \to \pi \in (0,1)$ and that
\begin{equation}\label{rhocondition}
    \E[ | \hat{\rho}_{N_{tr}} - \rho^B | \mid  \hat{\rho}_{N_{tr}} ] \stackrel{p}{\to} 0.
\end{equation}
Then $\hat{\rho}_{N_{tr},t_{N_{tr}}}$ is consistent for $t_{N_{tr}} = m_{tr}/N_{tr}$.
\end{lemma}


This is a relatively straightforward sufficient condition for the consistency of $\hat{\lambda}^{\hat{\rho}}_{adapt}$. We would like to note however that, as shown in \citet{ptpr} for the Bayes classifier, \eqref{rhocondition} usually is much stronger than consistency of the classifier in Definition \ref{consistencydefinition}.

We close this section with two examples:

\begin{example}
Assume access to the Bayes classifier $\rho^B$ and $\pi \neq 1/2$. In this case, \citet{hediger2020use} showed that no test based on $\rho^B_{1/2}$ has power higher than its level. In our case, this translate to an inconsistent estimate of $\lambda_{bayes}^{\rho^B}$. On the other hand, it is well-known that
\[
A_0^{\rho^B}(1-\pi) +   A_1^{\rho^B}(1-\pi) =  A_0^{\rho^*}(1/2) +    A_1^{\rho^*}(1/2),
\]
so the cutoff of $1-\pi$ fixes the issue and indeed leads to both a consistent estimate and a consistent test. 
\end{example}

\begin{example}
Combining the arguments in \citet[Theorem 3.1]{biauconsistency} and \citet{ptpr}, if $X$, $Y$ are supported on $ [0,1]^d$ and $\hat{\rho}$ is a Random Forest using random splitting, then \eqref{rhocondition} holds. For an adapted version of this Random Forest, the result can also be extended to distributions of $X$, $Y$ on $\R^d$, see \citet[Theorem 3.2]{biauconsistency}.
\end{example}

Of course, as before, even if $\hat{\rho}$ is not consistent, we might still be able to detect a signal, giving us an indication of the strength of difference between two distributions. That is, the result of Proposition \ref{estimatedrhoresult} and Corollary \ref{estimatedrhoresult2} hold more generally, as long as $\lambda(\hat{\rho})$ converges in probability to some $\lambda(\rho) \leq \lambda$, which may again be seen as the total variation distance on the projected space. In the next section, we move on from the question of consistency and study how one might find a $\rho$ in practice in a more general framework.

\section{Practical considerations}\label{appsec}\label{rhochoice}

In this section we put the methodology introduced in Section \ref{sec:meth} in practical perspectives. We first generalize our setting to allow for more flexible projections: Let $\mathcal{P}=\{ P_{t}\mbox{ },\mbox{ } t \in I\}$ be a family of probability measures defined on a measurable space $\left(\mathcal{X}, \A\right)$ indexed by a totally ordered set $\left(I, \preceq \right)$. We further assume to have a probability measure $\mu$ on $I$ and independent observations $\mathcal{T} = \left\{(z_i, t_i)\right\}_{i=1}^{N}$ such that $z_{i} \sim P_{t_i}$ conditionally on $t_i \in I$ drawn from $\mu$, for $1 \leq i \leq N$. Given $s \in I$, and a function $\rho: \mathcal{X} \rightarrow I$, we define two empirical distributions denoted $\hat{F}_{m,s}$ and $\hat{G}_{n,s}$ obtained from ``cutting" the set of observations $\mathcal{T}$ at $s$. Namely if we assume that out of the $N = m + n$ observations, $m$ of them have their index $t_i$ smaller or equal to $s$ and $n$ strictly above, we have for $y \in I$
$$\hat{F}_{m,s}(y) = \frac{1}{m}\sum_{i=1}^{N}\mathbbm{1}\{\rho(z_i) \leq y, t_i \leq s\}, \hspace{0.25cm}\text{and} \hspace{0.25cm} \hat{G}_{n,s}(y) = \frac{1}{n}\sum_{i=1}^{N}\mathbbm{1}\{\rho(z_i) \leq y, t_i > s\}.$$ 

These empirical distributions correspond to the population mixtures $F_s \propto \int_{t \leq s} \rho_{\#}P_t\ d\mu(t)$  and $G_s \propto \int_{t > s} \rho_{\#}P_t\ d\mu(t)$. We will similarly denote the measures associated to $F_s$ and $G_s$ as $P_s$ and $Q_s$ respectively and will use the two notations interchangeably.
Note that we assumed $m$, $n$ deterministic so far, which changes in the above framework, where $m \sim \mbox{Bin}(\pi, N)$, with $\pi:=\Prob(T \leq s)$. Still, with a conditioning argument, one can show that, whenever the level is guaranteed for nonrandom $m,n$, it will also be once $m,n$ are random.



The question remains how to find a good $\rho$ in practice. As our problems are framed as a split in the ordered elements of $I$, it always holds that one sample is associated with higher $t \in I$ than the other. Consequently, we have power as soon as we find a $\rho: \mathcal{X} \to I$ that mirrors the relationship between $T$ and $Z_{T}$. It therefore makes sense to frame the problem of finding $\rho$ as a loss minimization, where we try to minimize the loss of predicting $T \in I$ from $Z \in \mathcal{X}$: For a given split point $s$, consider $\rho_s$ that solves 
\begin{align} \label{overallmin}
    \rho_s := \argmin_{h \in \F}  \E[\mathcal{L}_s(h(Z), T)],
\end{align}
where $\F$ is a collection of functions $h:\mathcal{X} \to I$ and $\mathcal{L}_s: I \times I \to \R_{+}$ is some loss function. As before, we assume to have densities $f_{s}$, $g_s$, for $P_s$, $Q_s$ respectively. For simplicity, we also assume that time is uniform on $I=[0,1]$. As it is well-known, taking $\F$ to be all measurable functions $h:\mathcal{X} \to I$ and $\mathcal{L}(f(z), t)= \left( f(z) - \Ind_{ (s,1]}(t) \right)^2$, we obtain the supremum as 
\begin{equation}\label{Bayesprobability2}
       \rho_{1,s}(z):=\E[\Ind_{ (s,1]}(T)| z]=\frac{(1-s) g_{s}(z)}{s f_{s}(z)+ (1-s)g_{s}(z)},
\end{equation}
which is simply the Bayes probability in \eqref{Bayesprobability}. Taking instead $\mathcal{L}(f(z), t)= \left( f(z) - t \right)^2$, yields $\E[T| Z]$. Some simple algebra shows that if there is only \emph{one} point of change $s^*$, i.e. $T$ is independent of $Z$ conditional on the event $T \leq s^*$ or $T > s^*$, $\E[T \mid z]$ can be expressed as:
\begin{equation}\label{Bayesregression}
        \rho_{2,s^*}(z)= \frac{1}{2} \left( s^* + \rho_{1,s^*}(z) \right),
\end{equation}
which is a shifted version of $\rho_{1,s^*}(z)$. Contrary to $\rho_{1,s}$, the regression version $\rho_{2,s^*}$ does not depend on the actual split point $s$ we are considering.


In Section \ref{empiricssection} we try to approximate \eqref{Bayesprobability2} and \eqref{Bayesregression} by using the Random Forest of \cite{Breiman2001}. That is, the function $\rho$ is fitted on a training set using a learning algorithm which is then used to compute an (asymptotic) HPLB based on an independent test set, as in Section \ref{estimatedrho}.

\section{Numerical examples} \label{empiricssection}


%
%

Distributional change detection in climate is a topic of active research (see e.g. \cite{sebaNico2020} and the references therein). We will demonstrate the estimator $\hat{\lambda}_{adapt}$ in three applications using the \texttt{NCEP$\_$Reanalysis 2} data provided by the NOAA/OAR/ESRL PSD, Boulder, Colorado, USA, from their website at \url{https://www.esrl.noaa.gov/psd/}. The analyses were run using the R-package \texttt{HPLB} (see \url{https://github.com/lorismichel/HPLB}). We mention that the estimator $\hat{\lambda}_{bayes}$ gives comparable results and is ommited here. This dataset is a worldwide reanalysis containing daily observations of the $4$ variables:
\begin{itemize}
    \item[-] \emph{air temperature (air)}: daily average of temperature at 2 meters above ground, measured in degree Kelvin;
    \item[-] \emph{pressure (press)}: daily average of pressure above sea level, measured in Pascal;
    \item[-] \emph{precipitation (prec)}: daily average of precipitation at surface, measured in kg per $m^2 $ per second;
    \item[-] \emph{humidity (hum)}: daily average of specific humidity, measured in proportion by kg of air;
\end{itemize}
over a time span from $1^{st}$ of January $1979$ to $31^{th}$ January $2019$. Each variable is ranging not only over time, but also over $2'592$ locations worldwide, indexed by longitude and latitude coordinates, as (longitude, latitude). All variables are first-differenced to reduce dependency and seasonal effects before running the analyses. Figure \ref{fig:geo} displayed the $4$ time series corresponding to the geo-coordinates (-45,-8) (Brazil).

\begin{figure}
\begin{subfigure}[b]{0.5\textwidth}
  \centering
  \includegraphics[width=8cm]{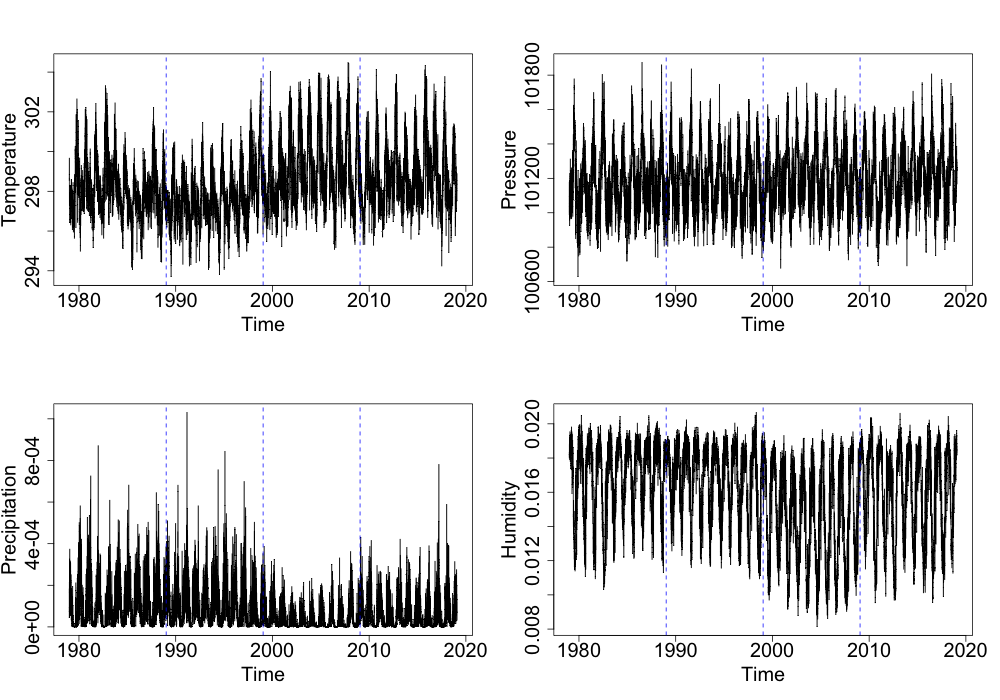}
  \label{fig:1}
\end{subfigure}
\begin{subfigure}[b]{0.5\textwidth}
  \centering
  \includegraphics[width=8cm]{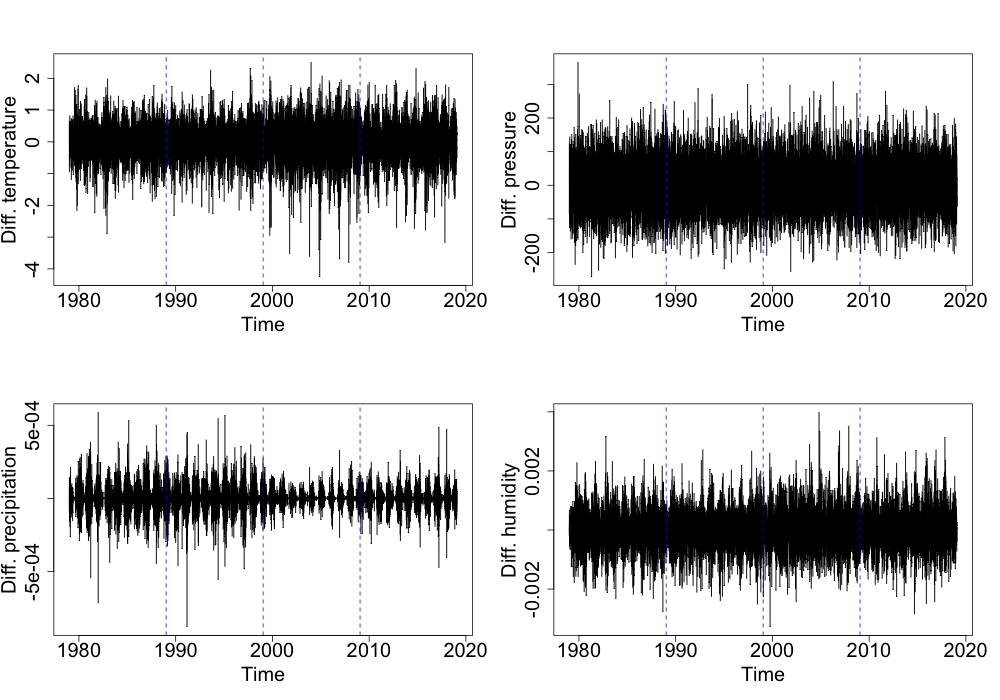}
\end{subfigure}
\caption{ Temperature, pressure, precipitation and humidity at geo-coordinates (-45,-8) (Brazil) over the time period ranging from 1979 to 2019 (on the left).  Corresponding differenced series (on the right). The vertical dashed blue lines are the breaks used in analysis (B).}
\label{fig:geo}
\end{figure}

The potential changes in distribution present in this dataset could require a refined analysis and simple investigation for mean and/or variance shift might not be enough. Moreover, detecting ``small'' changes, as $\hat{\lambda}_{adapt}$ is designed to do, could be of interest. In addition, thanks to the equivalent characterization of TV explained in Section \ref{sec:intro}, $\hat{\lambda}_{adapt}$ represents the minimal percentage of days on which the distribution of the considered variables has changed. We present $2$ types of analyses to illustrate the use of the (asymptotic) HPLBs introduced in this paper:


\begin{itemize}
    \item[(A)] \textbf{\emph{temporal climatic change-map}}: a study of the change of climatic signals between two periods of time ($1^{st}$ of January $1979$ to $15^{th}$ of January $1999$ against $16^{th}$ of January $1999$ to $31^{th}$ January $2019$) across all $2'592$ locations.
    \item[(B)] \textbf{\emph{fixed-location change detection}}: a study of the change of climatic signals over several time points for a fixed location.
\end{itemize}




For analysis (A), compare the first half (years 1979-1999) of the data with the second half (years 1999-2019) over all available locations. That is $P_s$ corresponds to the distribution of the first half of the (differenced) data, while $Q_s$ corresponds to the second. The projection $\rho$ is chosen to be a Random Forest classification. To this end, sample-splitting is applied and the available time-span is equally divided into $4$ consecutive time blocks, of which the middle two are used as a training set, while the remaining two are used as a test set. The goal is thereby, as with differencing, to reduce the dependence between observations due to the time-structure of the series. Figure \ref{fig:climate_map} shows the results as a world heatmap. Interestingly, there is an area of very-high estimated TV values in the pacific ocean off the cost of South America. The water temperature in this area is indicative of El Ni\~no.

\begin{sidewaysfigure}
    \centering
    \includegraphics[width=20cm]{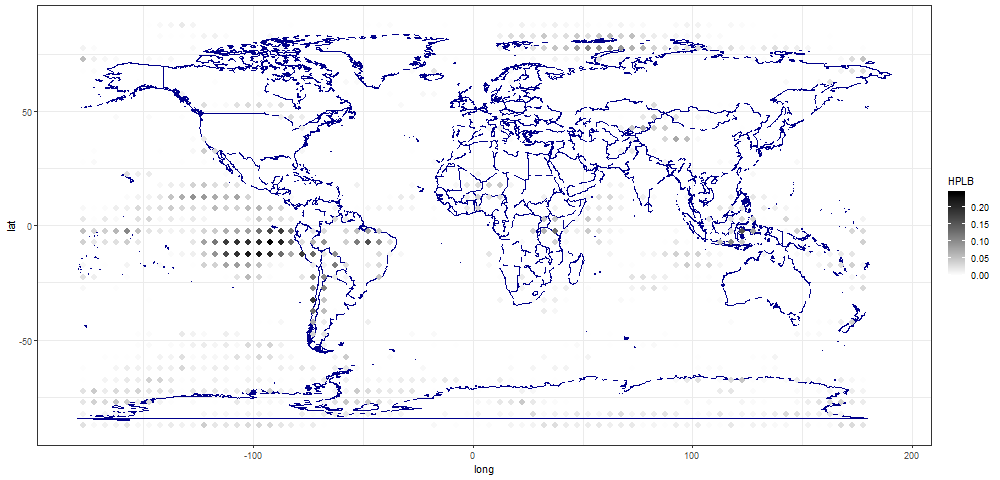}
    \caption{TV estimates by location using $\hat{\lambda}_{adapt}$ when comparing the first half of the available time span with the second half. A Random Forest is fitted using blocks of training and testing on the (differenced) values of the 4 variables. Color changes from white (zero) to black (one).}
    \label{fig:climate_map}
\end{sidewaysfigure}





Analysis (B) illustrates the mixture framework introduced in Section \ref{rhochoice} in a time series context where the ordering is given by time. We analyse the change in distribution for the four climatic variables for $3$ split points chosen uniformly over the time span. The location is thereby fixed to the coordinates (-45,-8) chosen from the analysis in (A). At each split point $s$, the distribution of the observations with time points below $s$ is compared to the future observations. In the context of Section \ref{rhochoice}, a regression model predicting time is an option to quickly evaluate $\hat{\lambda}^{\rho}_{adapt}$ for several different splits. This corresponds to taking the squared error loss in Section \ref{rhochoice}. Here a Random Forest regression is used to predict time from the four variables. Each data point within a period defined by two splits is allocated into two sets (train and test) as follows: the first and last quartiles of the period are allocated to the training set, the rest (i.e. the middle part) is allocated to the testing set. Single splits through time can be then readily analyzed using $\hat{\lambda}_{adapt}$ on the test data. In addition to the analysis with real data here, \ref{simsec_2} shows some simulations results. 

Figure \ref{continuouschangeresults} summarizes the result of the analysis with split points $s$ considered marked in Figure \ref{fig:geo} by blue breaks: While Figure \ref{fig:geo} indicates that some change might be expected even after differencing, this impression is only confirmed for precipitation in Figure \ref{continuouschangeresults}. This hints at the fact that a shift is appearing in precipitation while for the other variables no change can marginally be detected. More interesting change is detectable once all $4$ variables are considered jointly. This is illustrated on the right in Figure \ref{continuouschangeresults}, where the estimated TV climbs to a (relatively) high value of around 0.14 between 1995 and and 2000. This corresponds to the high signal observed in Figure \ref{fig:climate_map} for these coordinates, only that here, the regression approach leads to a slightly lower $\hat{\lambda}_{adapt}$.

\begin{figure}
\begin{subfigure}[b]{.5\textwidth}
  \centering
  \includegraphics[width=7cm]{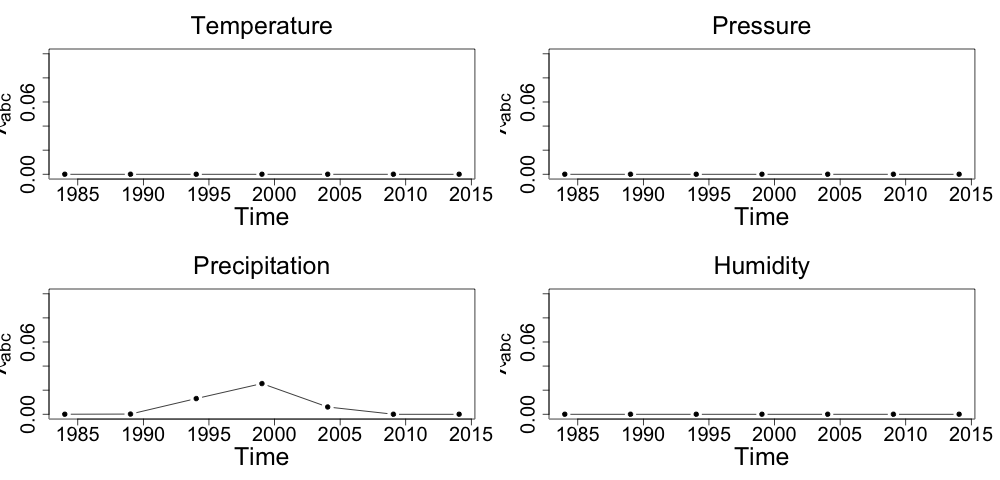}
  \end{subfigure}
  \begin{subfigure}[b]{.5\textwidth}
  \centering
  \includegraphics[width=7cm]{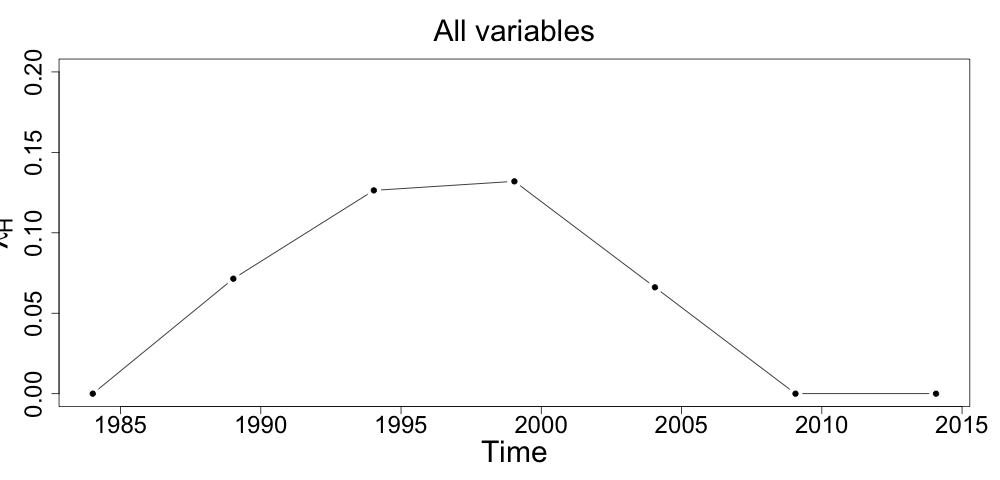}
\end{subfigure}
  \begin{subfigure}[b]{.5\textwidth}
  \centering
  \includegraphics[width=7cm]{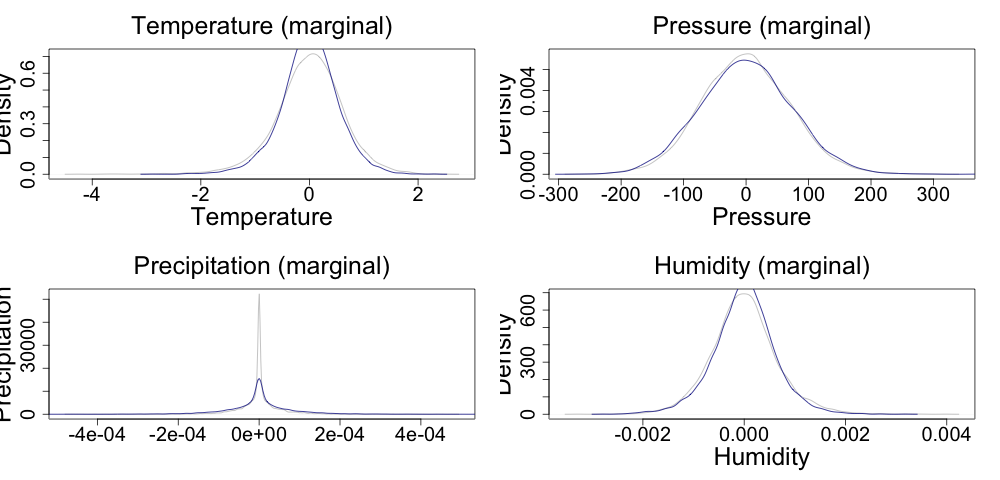}
\end{subfigure}
  \begin{subfigure}[b]{.5\textwidth}
  \centering
  \includegraphics[width=7cm]{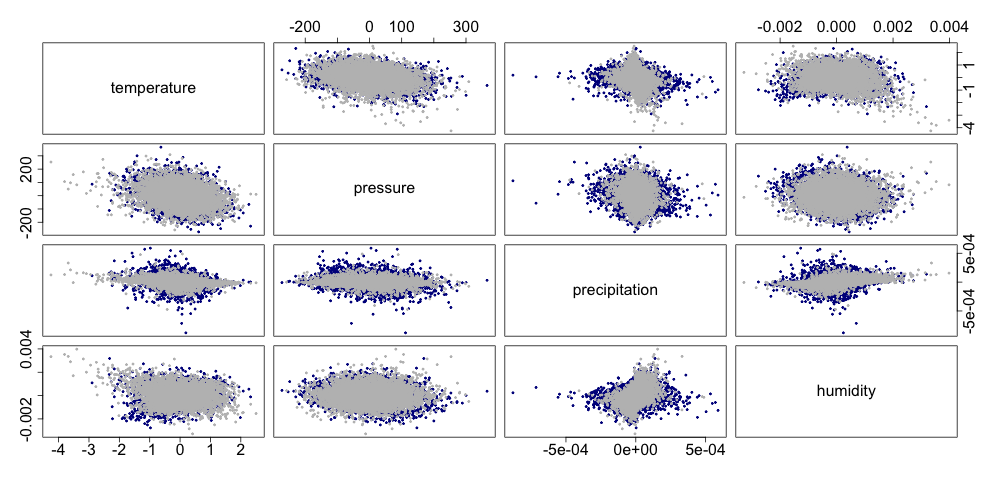}
\end{subfigure}
\caption{Top rows: High-probability lower bounds on total variation corresponding to $8$ breaks for differenced temperature, pressure, precipitation and humidity. On the left the marginal analysis, on the right the joint analysis. Bottom row: Corresponding analysis of marginal density estimates and pair plots.}
\label{continuouschangeresults}
\end{figure}

\section{Discussion}

We proposed in this paper two probabilistic lower bounds on the total variation distance between two distributions based on a one-dimensional projection. We theoretically characterized power rates given a sequence of (potentially random) projections $\rho_N$ and showed that the adaptive estimator always reaches the best possible rate. Application to a climate reanalysis dataset showcased potential use of these estimators in practice.

\clearpage

\bibliography{bib}{}
\bibliographystyle{plainnat}

\appendix

\section{Simulations} \label{simsec}

\subsection{Illustration of Results in Examples \ref{Example_1} and \ref{Example_2}} \label{simsec_1}




\begin{figure}[H]
\centering
\includegraphics[width=10cm]{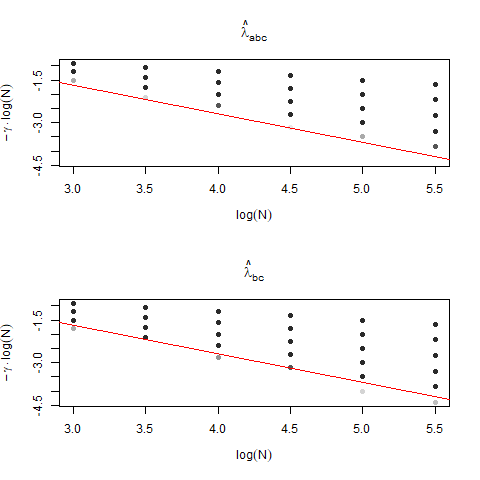}
\caption{Illustration of Proposition \ref{basicpowerresultBinomial} in Example \ref{Example_1}. For a range of different $\gamma$, $-\gamma \cdot \log(N)$ is plotted against $N$. For each $(\gamma, N)$ combination and for 100 repetitions, data was generated from the distribution in Example \ref{Example_1}. The dots indicate the number of times the estimator was strictly larger than zero, with points ranging from white (constituting values smaller 0.05) to black (constituting a value of 1). The red line shows a slope of -1 for comparison.}
\label{asymptoticpowerfig1}
\end{figure}

\begin{figure}[H]
\centering
\includegraphics[width=10cm]{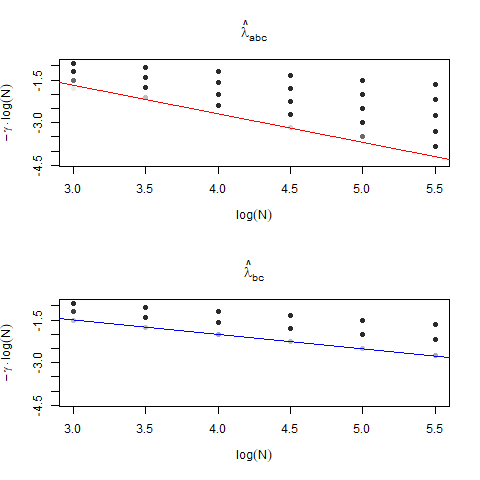}
\caption{Illustration of Propositions \ref{Example2prop} and \ref{Example3prop} of Example \ref{Example_2}. For a range of different $\gamma$, $-\gamma \cdot \log(N)$ is plotted against $N$. For each $(\gamma, N)$ combination and for 100 repetitions, data was generated from the distribution in Example \ref{Example_2}. The dots indicate the number of times the estimator was strictly larger than zero, with points ranging from white (constituting values smaller 0.05) to black (constituting a value of 1). The red and blue lines show slopes of -1 and -1/2 for comparison.}
\label{asymptoticpowerfig2}
\end{figure}

\subsection{Change Detection}\label{simsec_2}

We illustrate the change detection described in Section \ref{empiricssection} in some simple simulation settings. As in Section \ref{rhochoice} we study independent random variables $X_t$, $t \in I$, with each $X_t \sim P_t$ and $\mu$ being the distribution of $T$ on $I$. In all examples, we take $\mu$ to be the uniform distribution on $(0,1)$ and

\begin{itemize}
    \item[1)] simulate independently first $t$ from $\mu$ and then $X_t$ from $P_t$ to obtain a training and test set, each of size $n=10'000$,
    \item[2)] train a Random Forest Regression predicting $t$ from $X_t$ on the training data, resulting in the projection $\rho$,
    \item[3)] given $\rho$, evaluate $\hat{\lambda}_{adapt}$ on the test data for 19 $s$ ranging from 0.05 to 0.95 in steps of 0.05.
\end{itemize}

The first simulation considers 3 settings with univariate random variables $X_t$:
\begin{itemize}
    \item[(a)] A mean-shift, with $X_t \sim \mathcal{N}(0,1)$ for $0 \leq t \leq 1/2$, $X_t \sim \mathcal{N}(1,1)$ for $1/3 < t \leq 2/3$ and $X_t \sim \mathcal{N}(2,1)$ for $2/3 < t \leq 1$.
    \item[(b)] A variance shift, with $X_t \sim \mathcal{N}(0,1)$ for $0 \leq t \leq 1/2$, $X_t \sim \mathcal{N}(0,2)$ for $1/3 < t \leq 2/3$ and $X_t \sim \mathcal{N}(0,3)$ for $2/3 < t \leq 1$.
    \item[(c)] A continuous mean-shift, with $X_t \sim \mathcal{N}(2t, 1)$.
\end{itemize}

Results are given in Figure \ref{Simulation_marginals}.

The second simulation illustrates a covariance change in a bivariate example: For $t\leq 0.5$, $X_t=(X_{t,1}, X_{t,2}) \sim \mathcal{N}(0, \Sigma_0)$, while for $t> 0.5$, $X_t=(X_{t,1}, X_{t,2}) \sim \mathcal{N}(0, \Sigma_1)$, with 
\[
\Sigma_0= \begin{pmatrix} 1 & 0.5 \\ 0.5 & 1 \end{pmatrix} \text{ and } \Sigma_1=\begin{pmatrix} 1 & -0.5 \\ -0.5 & 1 \end{pmatrix}.
\]
The upper and middle part of Figure \ref{Simulation_bivariate} plots the marginal distributions $X_{t,i}$ against $t$. In all $T=1000$ cases, $\lambda_{adapt}^{\rho}$ (correctly) does not identify any changes in the two marginals. The change is however visible when considering the two variables jointly.

\begin{figure}[H]  
  \centering
  \includegraphics[width=12cm]{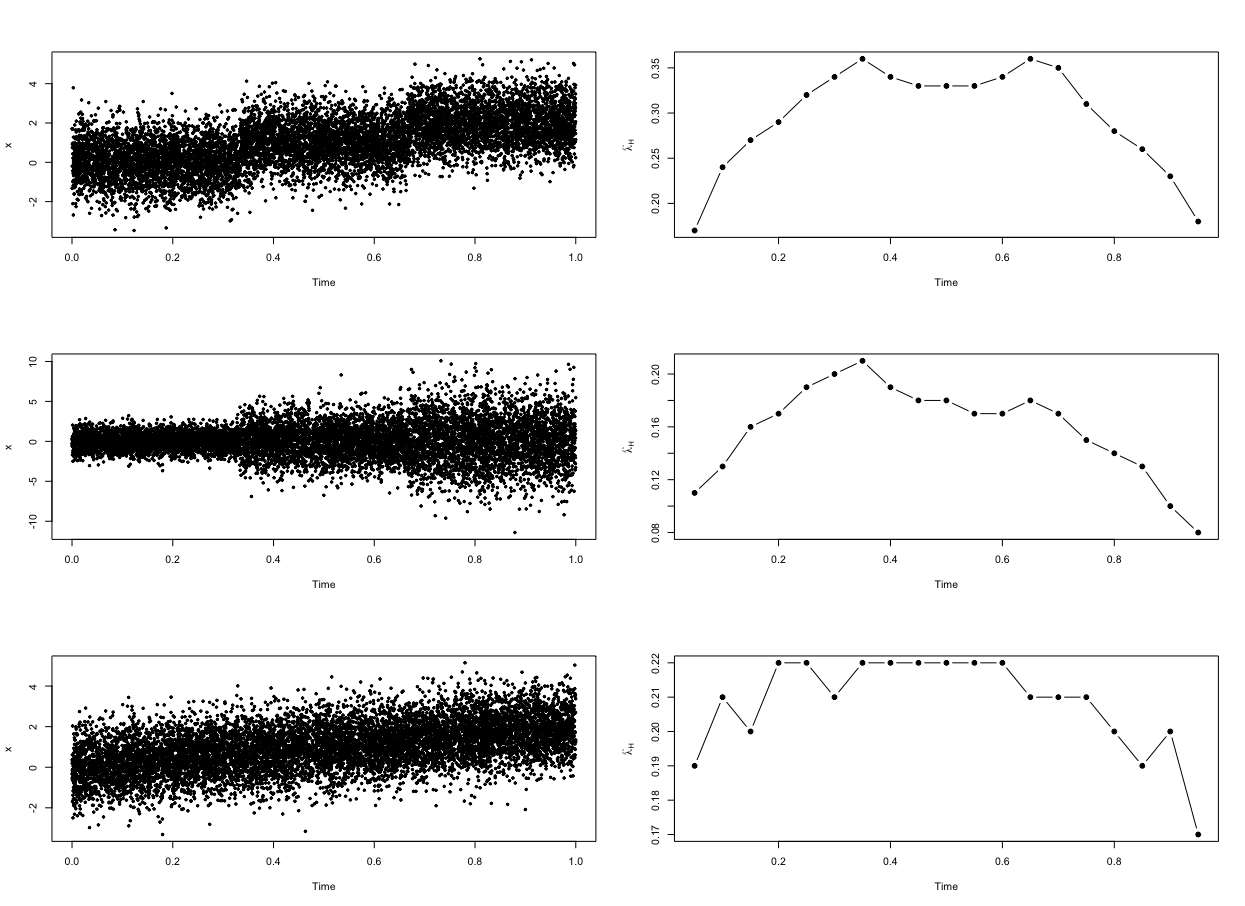}
  \caption{Top row: 3 regimes of mean-shifts. Middle row: 3 regimes of increasing variance, Bottom row: continuous mean-shift}
  \label{Simulation_marginals}
\end{figure}

\begin{figure}  
  \centering
  \includegraphics[width=12cm]{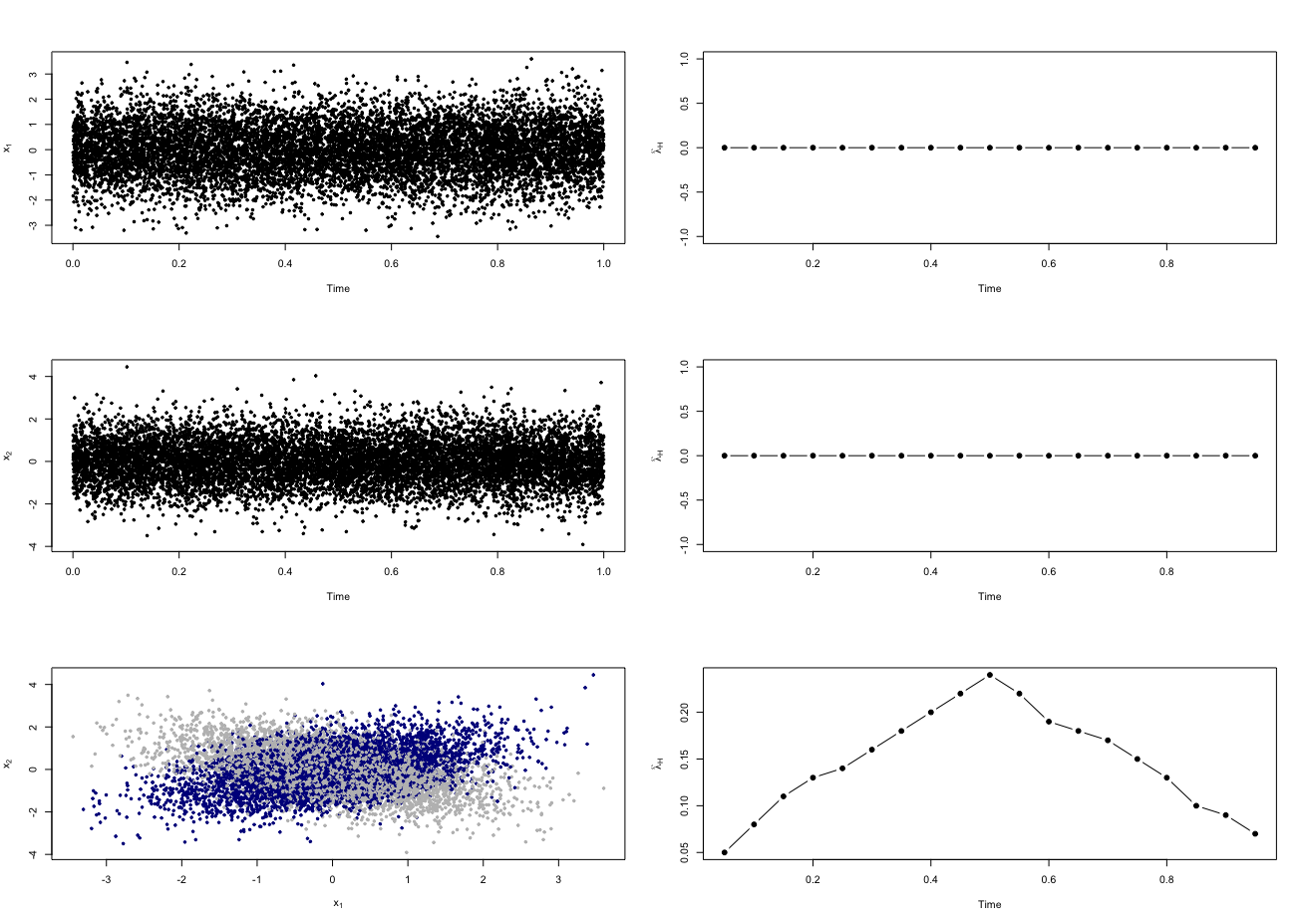}
  \caption{Top and middle row: marginal distributions; Bottom row: joint distribution.  }
  \label{Simulation_bivariate}
\end{figure}

\section{Analytical bounding function}\label{boundingexpressionsec}

Here we give an analytical expression for $\sq_{\alpha/3}\left(\tilde{V}_{m(\tilde{\lambda}),z}, z=1,\ldots, m(\tilde{\lambda})+ n(\tilde{\lambda}) -1  \right)$ in \eqref{TVsearcheqgeneral} based on the theory in \cite{finner2018}:

\begin{corollary}\label{boundingexpressioncor}
The following is a valid simultaneous confidence band in \eqref{TVsearcheqgeneral}:
\begin{align}\label{finnerconfidenceband}
    &\sq_{\alpha/3}\left(\tilde{V}_{m(\tilde{\lambda}),z}, z=1,\ldots, m(\tilde{\lambda})+ n(\tilde{\lambda}) -1  \right)=  (z - q_{1-\frac{\alpha}{3}}(\tilde{\lambda},m)) \frac{m(\tilde{\lambda})}{m(\tilde{\lambda})+n(\tilde{\lambda})} + \nonumber \\
    &\beta_{\alpha/3,m(\tilde{\lambda})} w\left(z - q_{1-\frac{\alpha}{3}}(\tilde{\lambda},m),m(\tilde{\lambda}),m(\tilde{\lambda})\right),
\end{align}

with
\[
\beta_{\alpha,m(\tilde{\lambda})}=\sqrt{2 \log(\log(m(\tilde{\lambda})))} + \frac{\log(\log(\log(m(\tilde{\lambda})))) -  \log(\pi) + 2 x_{\alpha/3} }{2 \sqrt{2 \log(\log(m(\tilde{\lambda})))}},
\]
and $w(z,m,n)$ defined as in \eqref{Hypervariance}. 
\end{corollary}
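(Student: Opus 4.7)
The plan is to recognize this result as a direct specialization of the hypergeometric simultaneous confidence band of \cite{finner2018}, which the paper already invoked to establish the form of $Q_{m,n,\alpha}(z,0)$ in the $\lambda=0$ case. The key observation is that $\tilde V_{m(\tilde\lambda),z}$ is by construction a hypergeometric counting process on $\{1,\ldots, m(\tilde\lambda)+n(\tilde\lambda)-1\}$ with parameters $m(\tilde\lambda)$ and $n(\tilde\lambda)$, so it admits exactly the same type of asymptotic bound as $Q_{m,n,\alpha}(\cdot,0)$ with $(m,n,\alpha)$ replaced by $(m(\tilde\lambda), n(\tilde\lambda), \alpha/3)$.

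Concretely, I would first invoke the Finner et al.\ bound in exactly the form already recalled for the $\lambda=0$ case: for any hypergeometric process $\tilde V_{k,z}$ on $\{1,\ldots,k+l-1\}$ with $k$ circles and $l$ squares,
\begin{equation*}
\limsup_{k+l \to \infty}\, \Prob\!\left( \sup_{z} \left[ \tilde V_{k,z} - \tfrac{z k}{k+l} - \beta_{\alpha/3,k}\, w(z,k,l) \right] > 0 \right) \leq \alpha/3,
\end{equation*}
with the explicit expression for $\beta_{\alpha/3,k}$ inherited from the law-of-iterated-logarithm asymptotics in \cite{finner2018}. I would then specialize to $k=m(\tilde\lambda)$, $l=n(\tilde\lambda)$, and shift the index via $z \mapsto z - q_{1-\alpha/3}(\tilde\lambda,m)$ so that the bound is expressed in terms of the original index $z \in \{q_{1-\alpha/3}(\tilde\lambda,m)+1, \ldots, m+n(\tilde\lambda)-1\}$. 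This substitution yields precisely \eqref{finnerconfidenceband}, and the $\alpha/3$ level matches the three-way split of $\alpha$ implicit in the definition of $Q_{m,n,\alpha}(z, \tilde\lambda)$ in \eqref{TVsearcheqgeneral}.

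The step I expect to require the most care is verifying that the asymptotic hypergeometric bound is genuinely applicable, since it requires $m(\tilde\lambda)$ and $n(\tilde\lambda)$ to diverge with $N$. Standard binomial-quantile asymptotics give $q_{1-\alpha/3}(\tilde\lambda,m) = m\tilde\lambda + O(\sqrt{m})$, hence $m(\tilde\lambda) = (1-\tilde\lambda)m + O(\sqrt{m})$ and analogously for $n(\tilde\lambda)$; for $\tilde\lambda$ bounded away from $1$ this yields divergence at the needed rate, while the boundary case is absorbed by the first two cases of the piecewise definition \eqref{TVsearcheqgeneral} (where $Q_{m,n,\alpha}$ collapses to the trivial value $z$ or $m$) and thus needs no separate probabilistic argument. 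Once this verification is in place, the remainder is purely algebraic bookkeeping matching the shifted Finner et al.\ band to the claimed expression.
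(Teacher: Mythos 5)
Your proposal takes essentially the same route as the paper: the paper likewise reduces to the Finner et al.\ hypergeometric band applied to the shifted process with effective sizes $m(\tilde\lambda)$, $n(\tilde\lambda)$ at level $\alpha/3$, and verifies divergence of $m(\tilde\lambda)$, $n(\tilde\lambda)$ (via Lemma \ref{inftylemma}, which is the same binomial-quantile asymptotics you invoke). The only small detail the paper adds that you omit is the observation that $m(\tilde\lambda)/n(\tilde\lambda) \to \pi/(1-\pi) \leq 1$, so $m(\tilde\lambda) \leq n(\tilde\lambda)$ for all but finitely many $N$, which is needed to apply the cited band in the paper's convention; this is a minor bookkeeping point, not a gap in the idea.
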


\begin{proof}
Applying Lemma \ref{inftylemma} with $p=1$ and $p_{\varepsilon}:=\lambda$, $m(\lambda)$, $n(\lambda)$ go to infinity as $m,n \to \infty$. Moreover, since we assume $m/N \to \pi \leq 1/2$, it holds that
\[
\lim_{N \to \infty} \frac{m(\lambda)}{n(\lambda)} = \frac{\pi}{1-\pi} \leq 1.
\]
Thus for all but finitely many $N$, it holds that $m(\lambda) \leq n(\lambda)$. Combining this together with the fact that $ \tilde{V}_{m,z - q_{1-\frac{\alpha}{3}}(\lambda,m)} - Q_{m,n, \alpha}(z,\lambda)$, $z \in \tilde{J}_{m,n,\lambda}$, is just a hypergeometric process adjusted by the correct mean and variance, it follows from the arguments in \cite{finner2018}:
\begin{align*}
\limsup_{N \to \infty}  \mathbb{P}\Big(\sup_{z \in \tilde{J}_{m,n,\lambda}} &\Big[  \tilde{V}_{m,z - (q_{1-\frac{\alpha}{3}}(\lambda,m))}  -  (z - q_{1-\frac{\alpha}{3}}(\lambda,m))\frac{m(\lambda)}{m(\lambda)+n(\lambda)} \\
&-  \beta_{\alpha/3,m(\lambda)} w\left(z - q_{1-\frac{\alpha}{3}}(\lambda,m) ,m(\lambda),n(\lambda)\right)\Big] > 0 \Big)\leq \frac{\alpha}{3}.
\end{align*}
Thus \eqref{Q1demonstration} indeed holds.
\end{proof}

\section{Proofs} \label{proofStoDom}

Here we present the proofs of our main results. We start with a few preliminaries: In Section \ref{sec:meth}, we defined for two functions $h_1,h_2: \mathbb{N} \rightarrow [0,+\infty)$, the notation $h_1(N) \asymp h_2(N)$ to mean that both (1) $\limsup_{N \to \infty} h_1(N)/h_2(N)  \leq a_1$, for some $a_1 \in \R^{+}$ and (2) $\limsup_{N \to \infty} h_2(N)/h_1(N)  \leq a_2$, for some $a_2 \in \R^{+}$. If instead only (1) is known, we write $h_1(N) = O(h_2(N))$ (translated as ``asymptotically larger equal''). If (1) is known to hold for $a_1=0$, we write $h_1(N) = o(h_2(N))$ (translated as ``asymptotically strictly smaller'').

The technical lemmas of Section \ref{techlemmas} should serve as a basis for the results in Section \ref{proofsection1} to \ref{proofsection2}. They ensure that we may focus on the most convenient case, when $(t_N)_{N \geq 1}$ is such that $N \sigma(t_N) \to \infty$ (Lemma \ref{ignorebadcase}) or $m \sigma_F \to \infty$ (Lemma \ref{ignorebadcase2}) holds. For these sequences, Lemma \ref{Gaussianresult} shows that,
\begin{equation}\label{convergenceindist0}
    \frac{\hat{F}_m(t_N) - \hat{G}_n(t_N) - (F(t_N) - G(t_N))}{\sigma(t_N)} \stackrel{D}{\to} \mathcal{N}(0,1), \text{ for } N \to \infty. 
\end{equation}

We will now summarize the main proof ideas for the most important results. For Propositions \ref{oraclelevel} and \ref{oracleprop}, providing the level and power of $\hat{\lambda}(t_N)$ respectively, we use Lemma \ref{Gaussianresult} and \ref{ignorebadcase} to obtain \eqref{convergenceindist0}. From this, Proposition \ref{oraclelevel} directly follows. It moreover implies that Proposition \ref{oracleprop} holds iff
\begin{equation*}
    \frac{ \lambda_N[ (1-\varepsilon) - \lambda(t_N)/\lambda_N]}{\sigma(t_N)}  \to -\infty \iff \eqref{CondII} \text{ and } \eqref{CondI}.
\end{equation*}
This is simple, as both \eqref{CondII} and \eqref{CondI} were designed such that this equivalence holds.


We start in a similar manner to obtain the power result for $\hat{\lambda}_{adapt}$ in Proposition \ref{newamazingresult}. We first restate the bounding function $Q^{A}$, for $z(t) \in \{q_{1-\alpha}(\tilde{\lambda}, m),\ldots, m + n(\tilde{\lambda})  \}$,
\begin{align} \label{Qmndef}
    Q_{m,n, \alpha}(z(t), \tilde{\lambda})  &=q_{1-\frac{\alpha}{3}}(\tilde{\lambda}, m) \frac{n(\tilde{\lambda})}{N(\tilde{\lambda})}  + z(t)\frac{m(\tilde{\lambda})}{N(\tilde{\lambda})} + \beta_{\frac{\alpha}{3}, m(\tilde{\lambda})}\sqrt{ \frac{m(\tilde{\lambda})}{N(\tilde{\lambda})} \frac{n(\tilde{\lambda})}{N(\tilde{\lambda})} \frac{N(\tilde{\lambda}) - z(t)}{N(\tilde{\lambda}) -1 } (z(t) -q_{1-\frac{\alpha}{3}}(\tilde{\lambda}, m)) },
\end{align}
with $N(\tilde{\lambda})=m(\tilde{\lambda}) + n(\tilde{\lambda})$. Lemma \ref{ignorebadcase2} ensures that we may focus on the case $m \sigma_F \to \infty$. This immediately implies $(V_{m,t_N} - mA_0(t_N) )/\sigma_F \stackrel{D}{\to} \mathcal{N}(0,1)$ due to the Lindeberg-Feller CLT (see e.g., \citet[Chapter 2]{vaart_1998}). Using Lemma \ref{simplified} we show that what we would like to prove,
\[
  \Prob(V_{m,t_N}  > Q_{m,n, \alpha}(z(t_N), \tilde{\lambda} ) \ \  \forall \tilde{\lambda} \in [0, \lambda_{\varepsilon}] ) \to 1 \iff \eqref{CondII} \text{ and } \eqref{CondI},
\]
can be replaced by the much simpler
\[
  \Prob(V_{m,t_N}  > \tilde{Q}(\varepsilon) ) \to 1 \iff \eqref{CondII} \text{ and } \eqref{CondI},
\]
where
\begin{align*}
 \tilde{Q}(\tilde{\lambda}) &= m \tilde{\lambda}(1-\pi)+ m [\pi A_0(t_N)  - (1-\pi) A_1(t_N) + (1-\pi)],
\end{align*}
can be seen as the ``limit'' of an appropriately scaled $Q_{m,n, \alpha}(z(t_N), \tilde{\lambda} )$. Using the structure of the problem and the asymptotic normality of $(V_{m,t_N} - m A_0(t_N))/\sigma_F$, we show that the result simplifies to showing that \begin{equation}
    \frac{ \lambda_N[ (1-\varepsilon) - \lambda(t_N)/\lambda_N]}{\sigma(t_N)}  \to -\infty \iff \eqref{CondII} \text{ and } \eqref{CondI},
\end{equation}
which was already done in Proposition \ref{oracleprop}.

On the other hand, to prove that $\hat{\lambda}_{adapt}$ is an asymptotic HPLB, we need to prove Propositions \ref{witsearch} and \ref{Qfunctions}. The former is immediate with an infimum argument, whereas the latter requires some additional concepts. In particular, we use the bounding operation described in Lemma \ref{barVmzproperties} to bound the original $V_{m,z}$ process pointwise for each $z$ by the well behaved $\bar{V}_{m,z}$. The randomness of this process is essentially the one of the hypergeometric process $\tilde{V}_{m(\tilde{\lambda}),z}, z=1,\ldots, m(\tilde{\lambda})+ n(\tilde{\lambda}) -1$, as introduced in Section \ref{TVsearchec}. The assumptions put on the bounding function $Q$, then ensure that we conserve the level.

The next three Section will provide the proofs of the main results, while Section \ref{techlemmas} collects the aforementioned technical lemmas.

\subsection{Proofs for Section \ref{sec:meth}} \label{proofsection1}

In this section, we prove the main results of Section \ref{sec:meth}, except for Propositions \ref{Example0prop}, \ref{basicpowerresultBinomial}, \ref{Example2prop} and \ref{Example3prop} connected to Examples \ref{Example_1} and \ref{Example_2}. Their proofs will be given in Section \ref{proofsection2}.


\oraclelevel*

\begin{proof}
From Lemma \ref{ignorebadcase} (III) in Section \ref{techlemmas}, we may assume that for the given $\left(P, Q, (t_N)_{N \geq 1}, \rho \right)$, $N \sigma(t_N) \to +\infty$, as $N \to \infty$. In this case, we know from Lemma \ref{Gaussianresult} that,
\begin{equation}
    \frac{\hat{F}_m(t_N) - \hat{G}_n(t_N) - (F(t_N) - G(t_N))}{\sigma(t_N)} \stackrel{D}{\to} \mathcal{N}(0,1). 
\end{equation}
Consequently,
\begin{align*}
    \limsup_{N \to \infty} \Prob\left( \hat{\lambda}^{\rho}(t_N) > F(t_N) - G(t_N) \right) &= \lim_N \Prob \left(\frac{\hat{F}_m(t_N) - \hat{G}_n(t_N) - (F(t_N) - G(t_N)) }{\sigma(t_N)} >q_{1-\alpha}  \right)\\
    &= \alpha.
\end{align*}
Since $\lambda \geq F(t_N) - G(t_N)$, the result then follows.
\end{proof}

The exact same proof can also be used to show that Proposition \ref{oraclelevel} holds true, for $\lambda=N^{\gamma}$, $-1 < \gamma \leq 0$, as long as $\lambda > 0$ for all finite $N$. 

Proposition \ref{Binom2} follows directly from Proposition \ref{oraclelevel} by exchanging $\sigma(t_N)$ with the consistent estimator used in $\hat{\lambda}_{bayes}^{\rho}$ and after checking that the case \eqref{NC} cannot happen, for a \emph{fixed} $\rho$.


\witsearch*

\begin{proof}

Let,
\[
G_{m,n}:=\left \{\tilde{\lambda} \in [0,1]: \sup_{z \in J_{m,n}} \left[  V_{m,z} -  Q_{m,n, \alpha}(z,\tilde{\lambda})\right] \leq 0 \right \}.
\]
Then by definition of the infimum,
\begin{align*}
    \mathbb{P}(\hat{\lambda}^{\rho} > \lambda) &\leq \mathbb{P}(\lambda \in G_{m,n}^c) \\
    &=\mathbb{P}(\sup_{z \in J_{m,n}} \left[  V_{m,z} -  Q_{m,n, \alpha}(z,\lambda)\right] > 0).
\end{align*}
The result follows by definition of $Q_{m,n, \alpha}$.
\end{proof}

To prove Proposition \ref{Qfunctions}, we need two technical concepts introduced in Section \ref{techlemmas}. In particular we utilize the concept of Distributional Witnesses in Definition \ref{Witdef} and the bounding operation in Lemma \ref{barVmzproperties}.


\Qfunctions*

\begin{proof}
We aim to prove
\begin{equation} \label{Q1demonstration}
            \limsup_{N \to \infty}  \mathbb{P}(\sup_{z \in J_{m,n}} \left[  V_{m,z} -  Q_{m,n, \alpha}(z,\lambda)\right] > 0) \leq \alpha.
\end{equation}

Let $\Lambda_{P}, \Lambda_{Q}$ be the distributional Witnesses of $P$ and $Q$, as in Definition \ref{Witdef}. Define the events $A_{P}:=\{ \Lambda_{P} \leq q_{1-\frac{\alpha}{3}}(\lambda,m) \}$, $A_{Q}:=\{ \Lambda_{Q} \leq q_{1-\frac{\alpha}{3}}(\lambda,n) \}$ and $A=A_{P} \cap A_{Q}$, such that $\mathbb{P}(A^c)\leq 2\alpha/3$. On $A$, we overestimate the number of witnesses on each side by construction. In this case we are able to use the bounding operation described above with $\bar{\Lambda}_P=q_{1-\frac{\alpha}{3}}(\lambda,m)$ and $\bar{\Lambda}_Q=q_{1-\frac{\alpha}{3}}(\lambda,n)$  to obtain $\bar{V}_{m,z}$ from Lemma \ref{barVmzproperties}. The process $\bar{V}_{m,z}$ has
\begin{align*}
   \bar{V}_{m,z}= \begin{cases}
    z, &\text{ if } 1 \leq z \leq q_{1-\frac{\alpha}{3}}(\lambda,m)\\
    m, &\text{ if } m+n(\lambda)\leq z \leq m+n\\ 
    \tilde{V}_{m,z - q_{1-\frac{\alpha}{3}}(\lambda,m)}+ q_{1-\frac{\alpha}{3}}(\lambda,m) , &\text{ if } q_{1-\frac{\alpha}{3}}(\lambda,m) <  z < m+n(\lambda),
    \end{cases}
\end{align*}
where $m(\lambda)=n-q_{1-\frac{\alpha}{3}}(\lambda,m)$, $n(\lambda)=n-q_{1-\frac{\alpha}{3}}(\lambda,n)$, and $\tilde{V}_{m,z - q_{1-\frac{\alpha}{3}}(\lambda,m)} \sim \text{Hypergeometric}(z-q_{1-\frac{\alpha}{3}}(\lambda,m), m(\lambda), m(\lambda) + n(\lambda))$. Then:
\begin{align*}
    &\mathbb{P}(\sup_{z \in J_{m,n}} \left[  V_{m,z} -  Q_{m,n, \alpha}(z,\lambda)\right] > 0 )\leq \frac{2\alpha}{3} +  \mathbb{P}(\sup_{z \in J_{m,n}} \left[  \bar{V}_{m,z} -  Q_{m,n, \alpha}(z,\lambda)\right] > 0 \cap A),
\end{align*}
Now, $\bar{V}_{m,z} -  Q_{m,n, \alpha}(z,\lambda) > 0$ can only happen for $ z \in \tilde{J}_{m,n,\lambda}:=\{q_{1-\frac{\alpha}{3}}(\lambda,m) + 1, \ldots, m+n(\lambda) -1 \}$, as by construction $\bar{V}_{m,z} -  Q_{m,n, \alpha}(z,\lambda) = 0$, for $ z \notin \tilde{J}_{m,n,\lambda}$. Thus
\begin{align*}
&\limsup_{N \to \infty}  \mathbb{P}(\sup_{z \in J_{m,n}} \left[  \bar{V}_{m,z} -  Q_{m,n, \alpha}(z,\lambda)\right] > 0 \cap A) \leq \frac{2\alpha}{3} +   \\
%
       %
       &\limsup_{N \to \infty}  \mathbb{P}(\sup_{z \in \tilde{J}_{m,n,\lambda}} \left[  \tilde{V}_{m,z - q_{1-\frac{\alpha}{3}}(\lambda,m)}  -  \sq_{\alpha/3}\left(\tilde{V}_{m(\lambda),z - q_{1-\frac{\alpha}{3}}(\lambda,m)}, z \in \tilde{J}_{m,n,\lambda}  \right)\right] > 0)\\
       &\leq \alpha, 
\end{align*}
by definition of $\sq_{\alpha/3}\left(\tilde{V}_{m(\lambda),z}, z=1,\ldots, m(\lambda)+ n(\lambda) -1  \right)$.


\end{proof}

\subsection{Proofs for Section \ref{powersec}}\label{proofsection2}




\oracleprop*

\begin{proof}
According to Lemma \ref{ignorebadcase} we are allowed to focus on sequences $(t_N)_{N \geq 1}$ such that $N\sigma(t_N) \to \infty$. For $(t_N)_{N \geq 1} \subset I$ such a sequence, it holds that
\begin{align*}
    \mathbb{P}(\hat{\lambda}^{\rho^*}(t_N) > (1-\varepsilon) \lambda_N)&= \mathbb{P}(\hat{F}_m(t_N) - \hat{G}_n(t_N) - q_{1-\alpha}\sigma(t_N)   > (1-\varepsilon) \lambda_N)\\
    &=\mathbb{P}\left(\frac{(\hat{F}_m(t_N) - \hat{G}_n(t_N) -  \lambda(t_N))}{\sigma(t_N)}  > q_{1-\alpha} -\frac{ \lambda_N[ (1-\varepsilon) - \lambda(t_N)/\lambda_N]}{\sigma(t_N)}  \right),
\end{align*}
where as in Section \ref{powersec}, $\lambda(t_N)=F(t_N) - G(t_N)$.
With the same arguments as in Proposition \ref{oraclelevel}, $(\hat{F}_m(t_N) - \hat{G}_n(t_N) -  \lambda(t_N))/\sigma(t_N) \stackrel{D}{\to} \mathcal{N}(0,1)$. Thus, $ \mathbb{P}\left(\hat{\lambda}^{\rho^*}(t_N) > (1-\varepsilon)\lambda_N\right) \to 1$, iff 
\begin{equation}\label{anotheroneofthoseconditions}
    \frac{ \lambda_N[ (1-\varepsilon) - \lambda(t_N)/\lambda_N]}{\sigma(t_N)}  \to -\infty.
\end{equation}

For $\gamma > -1/2$, \eqref{CondI}, \eqref{CondII} and $\eqref{anotheroneofthoseconditions}$ are all true for $t_N=1/2$, so there is nothing to prove in this case.

For $\gamma \leq -1/2$, assume \eqref{CondII} and \eqref{CondI} are true for $(t_N)_{N \geq 1}$. Then if $\liminf_{N \to \infty} \lambda(t_N)/\lambda_N > 1-\varepsilon$,
\begin{align*}
    \frac{ \lambda_N[ (1-\varepsilon) - \lambda(t_N)/\lambda_N]}{\sigma(t_N)} \leq  \frac{ \lambda_N \left[ (1-\varepsilon) - \inf_{M \geq N }\lambda(t_M)/\lambda_M \right]}{\sigma(t_N)} \to -\infty,
\end{align*}
as $\left[ (1-\varepsilon) - \inf_{M \geq N }\lambda(t_M)/\lambda_M \right] < 0$ for all but finitely many $N$ and $\lambda_N/\sigma(t_N) \to +\infty$, by \eqref{CondI.1}. If instead $\liminf_{N \to \infty} \lambda(t_N)/\lambda_N = 1-\varepsilon$, the statement follows immediately from \eqref{CondI.2}. This shows one direction.

On the other hand, assume for all $(t_N)_{N \geq 1}$ \eqref{CondII} or \eqref{CondI} is false. We start by assuming the negation of \eqref{CondII}, i.e., $\liminf_{N \to \infty} \lambda(t_N)/\lambda_N < 1-\varepsilon$. Then there exists for \emph{all} $N$ an $M \geq N$ such that $\lambda(t_M)/\lambda_M \leq 1-\varepsilon$, or 
\[
 \frac{ \lambda_M[ (1-\varepsilon) - \lambda(t_M)/\lambda_M]}{\sigma(t_M)}  \geq 0. 
\]
This is a direct contradiction of \eqref{anotheroneofthoseconditions}, which by definition means that for large enough $N$ all elements of the above sequence are below zero. Now assume \eqref{CondI.1} is wrong, i.e. $\liminf_{N \to \infty} \lambda(t_N)/\lambda_N > 1-\varepsilon$, but $\liminf_{N \to \infty} \lambda_N/\sigma(t_N) < \infty $. Since $ \lambda(t_N)/\lambda_N \in [0,1]$ for all $N$, the lower bound will stay bounded away from $-\infty$ in this case. More specifically,
\[
\liminf_{N \to \infty}  \frac{\lambda_N}{\sigma(t_N)} [ (1-\varepsilon) - \lambda(t_N)/\lambda_N] \geq \liminf_{N \to \infty}  \frac{\lambda_N}{\sigma(t_N)} \liminf_N [ (1-\varepsilon) - \lambda(t_N)/\lambda_N] > -\infty.
\]
The negation of \eqref{CondI.2} on the other hand, leads directly to a contradiction with \eqref{anotheroneofthoseconditions}.
Consequently, by contraposition, the existence of a sequence $(t_N)_{N \geq 1}$ such that $\eqref{CondI}$ and $\eqref{CondII}$ are true is necessary.

\end{proof}


\nofastratebinomial*

\begin{proof}
Since $F(t)=A_0(t)$ and $1-G(t)=A_1(t)$, it holds that $\sigma(1/2)/\hat{\sigma}(1/2) \to 1$ almost surely. Thus, the same arguments as in the proof of Proposition \ref{oracleprop} with $t_N=1/2$ give the result. 
\end{proof}


\newamazingresult*

\begin{proof}

Let for the following $\varepsilon \in (0,1]$ be arbitrary. The proof will be done by reducing to the case of $\hat{\lambda}(t_N)$. 
For a sequence $(t_N)_{N \geq 1}$ and a given sample of size $N$ we then define the (random) $z(t_N)$, with
\begin{equation}\label{zdef}
     z(t_N)=\sum_{j=1}^m \Ind\{\rho^*(X_j) \leq  t_N \} + \sum_{i=1}^n \Ind\{\rho^*(X_i) \leq  t_N \}=m \hat{F}(t_N) + n \hat{G}(t_N).
\end{equation}
Since by definition the observations $\rho^*_{(1)}, \ldots \rho^*_{(z(t_N))}$ are smaller $t_N$, the classifier $\tilde{\rho}_{t_N}(z):=\Ind\{\rho^*(z) > t_N \}$ will label all corresponding observations as zero. As such the number of actual observations coming from $P$ in $\rho^*_{(1)}, \ldots \rho^*_{(z(t_N))}$, $V_{m,t_N}$, will have $V_{m,t_N} \sim \mbox{Bin}(A_0(t_N), m)$. Recall that
\[
A_0(t_N)=A_0^{\rho^*}(t_N)=P( \tilde{\rho}_{t_N}(X) = 0), \ \ A_1(t_N)=A_1^{\rho^*}(t_N)=Q( \tilde{\rho}_{t_N}(Y) = 1),
\]
i.e. the true accuracies of the classifier $\tilde{\rho}_{t_N}$.

The goal is to show that we overshoot the quantile $Q_{m,n, \alpha}$:
\begin{equation}\label{whatwewant}
    \Prob(V_{m,t_N}  > Q_{m,n, \alpha}(z(t_N), \tilde{\lambda} ) \ \  \forall \tilde{\lambda} \in [0, \lambda_{\varepsilon}] ) \to 1,
\end{equation}
if and only if there exists a $(t_N)$ such that \eqref{CondI} and \eqref{CondII} hold. For this purpose, Lemma \ref{ignorebadcase2} emulates Lemma \ref{ignorebadcase} to allow us to focus on $(t_N)_{N \geq 1}$ such that $m A_0(t_N)(1-A_0(t_N)) \to \infty$.

A sufficient condition for \eqref{whatwewant} is 
\begin{equation} \label{currentgoalsufficient}
    \Prob \left( \frac{V_{m,t_N}- m A_0(t_N)}{\sqrt{m A_0(t_N)(1-A_0(t_N))}}  >  \frac{\sup_{\tilde{\lambda} }Q_{m,n, \alpha}(z(t_N), \tilde{\lambda}) - m A_0(t_N)}{\sqrt{m A_0(t_N)(1-A_0(t_N))}}   \right) \to 1,
\end{equation}
while a necessary condition is given by 
\begin{equation} \label{currentgoalnecessary}
    \Prob \left( \frac{V_{m,t_N}- m A_0(t_N)}{\sqrt{m A_0(t_N)(1-A_0(t_N))}}  >  \frac{ Q_{m,n, \alpha}(z(t_N), \lambda_{\varepsilon}) - m A_0(t_N)}{\sqrt{m A_0(t_N)(1-A_0(t_N))}}   \right) \to 1.
\end{equation}
We instead work with a simpler bound:
\begin{align}\label{Qtildedef}
 \tilde{Q}(\tilde{\lambda}) &= m \tilde{\lambda}(1-\pi)+ m [\pi A_0(t_N)  - (1-\pi) A_1(t_N) + (1-\pi)]. 
\end{align}
Note that
\begin{equation}
    \sup_{\tilde{\lambda} \in [0, \lambda_{\varepsilon}] } \tilde{Q}(\tilde{\lambda} ) = \tilde{Q}(\lambda_{\varepsilon}).
\end{equation}
and
\begin{align*}
   \tilde{Q}(\lambda_{\varepsilon})- m A_0(t_N) &= m (1-\pi)[\lambda_{\varepsilon} -   (A_0(t_N)(t_N)  + A_1(t_N) - 1) ]\\
   &= m (1-\pi)[\lambda_{\varepsilon} -   \lambda(t_N) ].
\end{align*}

We first show that 
\begin{equation}\label{currentgoal2}
    \Prob \left( \frac{V_{m,t_N}- m A_0(t_N)}{\sqrt{m A_0(t_N)(1-A_0(t_N))}}  >  \frac{\tilde{Q}(\lambda_{\varepsilon}) - m A_0(t_N)}{\sqrt{m A_0(t_N)(1-A_0(t_N))}}   \right) \to 1,
\end{equation}
if and only if there exists a $(t_N)$ such that \eqref{CondI} and \eqref{CondII} hold.

Since again $\frac{V_{m,t_N}- m A_0(t_N)}{\sqrt{m A_0(t_N)(1-A_0(t_N))}} \stackrel{D}{\to} N(0, 1)$, due to the Lindeberg-Feller CLT (\cite{vaart_1998}), \eqref{currentgoal2} holds iff 
\begin{equation}\label{anotheroneofthoseconditions2}
    \frac{ \tilde{Q}(\lambda_{\varepsilon} ) - m A_0(t_N)}{\sqrt{m A_0(t_N)(1-A_0(t_N))}}   \to - \infty.
\end{equation}
To prove this claim, we write
\begin{align}\label{}
    \frac{ \tilde{Q}(\lambda_{\varepsilon} ) - m A_0(t_N)}{\sqrt{m A_0(t_N)(1-A_0(t_N))}} = (1-\pi) \frac{\lambda_N [(1-\varepsilon) -   \lambda(t_N)/\lambda_N ]}{\sqrt{ \frac{A_0(t_N)(1-A_0(t_N))}{m}}}
\end{align}
and show that
\begin{equation}\label{anotheroneofthoseconditions3}
    \frac{\lambda_N}{\sqrt{ \frac{A_0(t_N)(1-A_0(t_N))}{m}}} \to +\infty \iff \frac{\lambda_N}{\sigma(t_N)} \to +\infty.
\end{equation}
In this case, \eqref{anotheroneofthoseconditions2} is equivalent to \eqref{anotheroneofthoseconditions} and it follows from exactly the same arguments as in the proof of Proposition \ref{oracleprop} that \eqref{anotheroneofthoseconditions2} is true iff there exists a $(t_N)$ such that \eqref{CondI} and \eqref{CondII} hold. 

To prove \eqref{anotheroneofthoseconditions3}, first assume that
\[
 \frac{\lambda_N}{\sqrt{ \frac{A_0(t_N)(1-A_0(t_N))}{m}}} \to +\infty.
\]
This implies that $A_0(t_N)(1-A_0(t_N))=o(N^{2\gamma + 1})$, which means that either $A_0(t_N)=o(N^{2\gamma + 1})$ or $(1-A_0(t_N))=o(N^{2\gamma + 1})$. Assume $A_0(t_N)=o(N^{2\gamma + 1})$. Since by definition $A_0(t_N) + A_1(t_N) - 1 =\lambda(t_N) = O(\lambda_N)$, this means that $1-A_1(t_N) = O(N^{\gamma}) + o(N^{2\gamma + 1})=o(N^{2\gamma + 1})$ and thus also $A_1(t_N)(1-A_1(t_N))=o(N^{2\gamma + 1})$. The same applies for $1-A_0(t_N)=o(N^{2\gamma + 1})$. Writing $\sigma(t)$ as in \eqref{sigmat2} this immediately implies $\frac{\lambda_N}{\sigma(t_N)} \to +\infty$. On the other hand, assume $\frac{\lambda_N}{\sigma(t_N)} \to +\infty$. This in turn means
\begin{equation}\label{A0A1condition}
    A_0(t_N)(1-A_0(t_N)) + A_1(t_N)(1-A_1(t_N))=o(N^{2\gamma + 1})
\end{equation}
and thus $A_0(t_N)(1-A_0(t_N))=o(N^{2\gamma + 1})$ and $\lambda_N/\sqrt{ \frac{A_0(t_N)(1-A_0(t_N))}{m}} \to +\infty$. This proves \eqref{anotheroneofthoseconditions3}. Using the arguments of the proof of Proposition \ref{oracleprop} this demonstrates that \eqref{anotheroneofthoseconditions2} is true iff there exists a $(t_N)$ such that \eqref{CondII} and \eqref{CondI} hold. 

It remains to show that $\eqref{currentgoal2}$ implies \eqref{currentgoalsufficient} and is implied by $\eqref{currentgoalnecessary}$. More specifically, as \eqref{CondI} demands that 
\[
A_0(t_N)(1-A_0(t_N)) = o(N^{2\gamma + 1}) \text{ and } A_1(t_N)(1-A_1(t_N)) = o(N^{2\gamma + 1}),
\] 
we may use Lemma \ref{simplified} below to see that for $c \in (0,+\infty)$,
\begin{align*}
   c + o_{\Prob}(1) \leq \frac{ Q_{m,n, \alpha}(z(t_N), \lambda_{\varepsilon}) - m A_0 }{\tilde{Q}(\lambda_\varepsilon) - m A_0  }  \leq   \frac{\sup_{\tilde{\lambda}} Q_{m,n, \alpha}(z(t_N), \tilde{\lambda}) - m A_0 }{\tilde{Q}(\lambda_\varepsilon) - m A_0  } \leq  \frac{1}{c} + o_{\Prob}(1).
\end{align*}

For $Z_N \stackrel{D}{\to} N(0,1)$, $Q_{1,N} \to -\infty$ and $ c + O_N \leq  Q_{2,N}/Q_{1,N}$, with $c > 0$ and $O_N \stackrel{p}{\to} 0$, it holds that
\begin{align*}
    \Prob(Z_N > Q_{2,N} ) &=  \Prob\left(Z_N > \frac{Q_{2,N}}{Q_{1,N}} Q_{1,N}   \right)\\
    & \geq \Prob\left(Z_N > (c+O_N) Q_{1,N}  \right)\\
    &= \Prob\left(Z_N > (c+O_N) Q_{1,N}  \cap  |O_N| \leq  \frac{c}{2} \right) + \Prob\left(Z_N > (c+O_N) Q_{1,N}  \cap  |O_N| >  \frac{c}{2} \right)\\
    & \to 1,
\end{align*}
as $Q_{1,N} < 0$ for all but finitely many $N$ and $(c+O_N) > 0$ on the set $|O_N| \leq  \frac{c}{2}$. Using this argument first with $Q_{1,N}= \tilde{Q}(\lambda_\varepsilon) - m A_0(t_N)$ and $Q_{2,N}= Q_{m,n, \alpha}(z(t_N), \lambda_{\varepsilon}) - m A_0(t_N) $, and repeating it with $Q_{1,N}=Q_{m,n, \alpha}(z(t_N), \lambda_{\varepsilon}) - m A_0(t_N) $ and $Q_{2,N}=  \tilde{Q}(\lambda_\varepsilon) - m A_0(t_N)$, \eqref{QQbounding} shows that $\eqref{currentgoal2}$ implies \eqref{currentgoalsufficient} and is implied by $\eqref{currentgoalnecessary}$.

\end{proof}

We are now able to prove the results in Examples \ref{Example_1} and \ref{Example_2}:


\Examplezeroprop*

\begin{proof}

First note that $\rho(z)= (1-p) \Ind\{ -1 \leq z \leq 0 \} + p \Ind\{ 0 \leq z \leq 1 \}$ and thus 
\begin{equation}\label{A0A1Example1}
    A_0(t_N)=p \Ind\{ 1-p \leq t_N \leq p \} + \Ind\{ t_N > p \}, \ \  A_1(t_N)=p \Ind\{ 1-p \leq t_N \leq p \} + \Ind\{ t_N < 1-p \}  
\end{equation}
Take any $\gamma \leq -1/2$. Then for $\lambda_N/\sigma(t_N) \to \infty$ to be true it is necessary that $A_0(t_N)(1-A_0(t_N))$ and $A_1(t_N)(1-A_1(t_N))$ go to zero. But from \eqref{A0A1Example1} and the fact that $p \to 0.5$, it is clear that this is only possible for $t_N \in [1-p,p]^c $ for all but finitely many $N$. However for such $t_N$, $\lambda(t_N)=A_0(t_N) + A_1(t_N)-1=0$. Similarly, a sequence $(t_N)_{N \geq 1}$ that satisfies \eqref{CondI}, cannot satisfy condition \eqref{CondII}. Thus for $\gamma \leq -1/2$ for any sequence $(t_N)_{N \geq 1}$ most one of the two conditions \eqref{CondII} and \eqref{CondI} can be true and thus $- 1/2 \leq \underline{\gamma}^{oracle}(\varepsilon)$.
On the other hand, for $\gamma > -1/2$, taking $t_N=1/2$ independently of $\gamma$, satisfies conditions \eqref{CondII} and \eqref{CondI}.
\end{proof}


\basicpowerresultBinomial*

\begin{proof}

We show that $\underline{\gamma}^{\hat{\lambda}_{bayes}}=-1$, from which it immediately follows that $\underline \gamma^{oracle}=-1$. Since $A_0^{\rho^*}(1/2)=\lambda_N$ and $A_1^{\rho^*}(1/2)=1$, it follows for any $\gamma > -1$, 
\[
\frac{\lambda_N}{\sigma(1/2)} = \frac{\sqrt{m}\lambda_N}{\sqrt{\lambda_N}}  \to \infty, \text{ for } N \to \infty.
\]
By Proposition \eqref{nofastratebinomial} this implies $\underline{\gamma}^{\hat{\lambda}_{bayes}}=-1$.
 

\end{proof}

\begin{proposition}\label{Example23propproof}
For the setting of Example \ref{Example_2}, let $\varepsilon \in (0,1]$ be arbitrary and $p_2 > 0.5$, $p_2 = 0.5 + o(N^{-1})$. Then $\hat{\lambda}^{\rho^*}_{adapt}$ attains the oracle rate $\underline{\gamma}^{\hat{\lambda}_{adapt}} = \underline{\gamma}^{oracle} = -1$, while $\hat{\lambda}^{\rho^*}_{bayes}$ attains the rate $\underline{\gamma}^{\hat{\lambda}_{bayes}}=-1/2$.
\end{proposition}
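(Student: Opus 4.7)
The plan is to exploit the piecewise-constant structure of $\rho^*$ and then invoke Corollary~\ref{nofastratebinomial}, Proposition~\ref{oracleprop}, and Proposition~\ref{newamazingresult} to reduce each of the three rate questions to a simple comparison of $\lambda_N$ with $\sigma(t_N)$ at a well-chosen cutoff.

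First, because $C_1, C_2, P_0, Q_0$ have disjoint supports, a direct computation of $\rho^*(z) = g(z)/(f(z)+g(z))$ yields $\rho^* = 0$ on $C_1$, $\rho^* = 1-p_2$ on $P_0$, $\rho^* = p_2$ on $Q_0$, and $\rho^* = 1$ on $C_2$. Thus $F = \rho^*_{\#}P$ and $G = \rho^*_{\#}Q$ are four-atom distributions, $\lambda(t) = F(t) - G(t)$ is a step function with jumps at $\{0, 1-p_2, p_2, 1\}$, and in particular $\lambda_N = \lambda(1/2) = p_1 + (1-p_1)(2p_2-1)$. Since $(2p_2-1) = o(N^{-1})$ by hypothesis, the second term is negligible compared to any $\lambda_N \asymp N^{\gamma}$ with $\gamma > -1$, forcing $p_1 \asymp N^{\gamma}$ and $\lambda_N \sim p_1$.

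For the Bayes estimator, Corollary~\ref{nofastratebinomial} reduces the problem to checking \eqref{CondI} at $t_N = 1/2$. Since $p_2 > 1/2$, this cutoff lies in the middle interval $[1-p_2, p_2)$, where $F(1/2) = p_1 + (1-p_1)p_2 \to 1/2$ and $G(1/2) = (1-p_1)(1-p_2) \to 1/2$, so $\sigma(1/2) \asymp N^{-1/2}$. Hence $\lambda_N/\sigma(1/2) \asymp N^{\gamma + 1/2}$ diverges iff $\gamma > -1/2$, yielding $\underline{\gamma}^{\hat{\lambda}_{bayes}} = -1/2$. For the oracle and adaptive rates, I would exhibit a witness sequence achieving rate $-1$ and appeal to Propositions~\ref{oracleprop} and~\ref{newamazingresult}. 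The natural choice is any fixed $t^* \in (0, 1/2)$, which lies in $[0, 1-p_2)$ for all large $N$: then $F(t^*) = p_1$ and $G(t^*) = 0$, so $\lambda(t^*) = p_1 \sim \lambda_N$ and $\liminf \lambda(t^*)/\lambda_N = 1 > 1-\varepsilon$, verifying \eqref{CondII} with strict inequality. The variance collapses to $\sigma(t^*) = \sqrt{p_1(1-p_1)/m} \asymp N^{(\gamma-1)/2}$, so $\lambda_N/\sigma(t^*) \asymp N^{(\gamma+1)/2} \to \infty$ for every $\gamma > -1$, giving \eqref{CondI.1}. Proposition~\ref{oracleprop} then yields $\underline{\gamma}^{\text{oracle}} = -1$, and Proposition~\ref{newamazingresult} transfers this rate to $\hat{\lambda}_{adapt}$.

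The main point here is conceptual rather than technical: at $t=1/2$ both CDFs sit near $1/2$, which pins $\sigma(1/2)$ at the parametric scale $N^{-1/2}$ and is wasteful whenever $\gamma < -1/2$; moving the cutoff down into the ``clean'' contamination region $\{\rho^* = 0\}$ isolates the mass $p_1$ carried by $C_1$ while paying only an $o(N^{-1})$ loss in signal, so the signal-to-noise ratio improves from $N^{\gamma + 1/2}$ to $N^{(\gamma+1)/2}$. Once this is seen, everything in the argument is a direct verification of \eqref{CondI} and \eqref{CondII}.
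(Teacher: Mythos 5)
Your proof is correct and follows essentially the same route as the paper: compute $\lambda_N=p_1+(1-p_1)(2p_2-1)$ so that $p_1\asymp\lambda_N$, verify \eqref{CondII} and \eqref{CondI.1} at a cutoff isolating the $C_1$-mass (the paper uses $t_N=0$, you use a fixed $t^*\in(0,1/2)$, which is equivalent since both give $\lambda(t)=p_1$, $A_0=p_1$, $A_1=1$), and conclude via Propositions \ref{oracleprop} and \ref{newamazingresult}, while for $\hat{\lambda}_{bayes}$ both arguments note that the accuracies at $t=1/2$ tend to $1/2$, pinning $\sigma(1/2)\asymp N^{-1/2}$ and invoking Corollary \ref{nofastratebinomial}. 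Your explicit computation of the atoms of $\rho^*_{\#}P$ and $\rho^*_{\#}Q$ is a helpful but inessential elaboration of what the paper leaves implicit.
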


\begin{proof}
We first find the expression for $\lambda_N$. Since $p_2  > 0.5$
\begin{align*}
    \lambda_N &= \int (f-g) dx = p_1  + \left[(1-p_1) p_2 - (1-p_1) (1-p_2)\right] \int  f_0 dx \\
    &=p_1  +(1-p_1) \left[2 p_2 -  1\right]
\end{align*}
and, since $p_2 - 1/2 = o(N^{-1})$, it immediately holds that $p_1 \asymp \lambda_N$. Let $\gamma > -1$ be arbitrary and take $t_N=0$ for all $N$. Then $\lambda(t_N)=p_1$ and it holds that
\begin{align*}
    \frac{\lambda_N}{ \lambda(t_N)} =  \frac{p_1  +(1-p_1) \left[2 p_2 -  1\right]}{p_1} = 1 + \frac{(1-p_1) \left[2 p_2 -  1\right]}{p_1} \to 1,
\end{align*}
as $2 p_2 -  1=o(N^{-1})$ by assumption. Combining this with the fact that $A^{\rho^*}_0(0)=p_1\asymp \lambda_N$, and $A^{\rho^*}_1(0)=1$ thus
\[
\frac{\lambda_N}{\sigma(t)} =\frac{\sqrt{m}\lambda_N}{\sqrt{ A^{\rho^*}_0(0)(1-A^{\rho^*}_0(0)) }}\asymp \sqrt{m\lambda_N} \to \infty,
\]
it follows that $\underline{\gamma}^{\text{oracle}}=-1$ and therefore also $\underline{\gamma}^{\hat{\lambda}_{adapt}}=-1$. On the other hand
\begin{align*}
    A^{\rho^*}_0(1/2)=A^{\rho^*}_1(1/2)= p_1 + (1-p_1) p_2  \to 0.5,
\end{align*}
so \eqref{CondI} cannot be true for any $\gamma \leq -1/2$. From Corollary \ref{nofastratebinomial} it follows that $\hat{\lambda}_{bayes}$ only attains a rate $\underline{\gamma}^{\hat{\lambda}_{bayes}}=-1/2$.

\end{proof}

Proposition \ref{Example2prop} and \ref{Example3prop} then immediately follow from Proposition \ref{Example23propproof}.



We continue with the proofs for Section \ref{estimatedrho}, by quickly restating assumptions (E1) and (E2):
\begin{itemize}
    \item[(E1)] $\hat{\rho}=\hat{\rho}_{N_{tr}}$ is trained on a sample of size $N_{tr}$, $(Z_1, \ell_1), \ldots, (Z_{N_{tr}}, \ell_{N_{tr}})$, and evaluated on an independent sample $(Z_1, \ell_1), \ldots, (Z_{N_{te}}, \ell_{N_{te}})$, with $N_{tr} + N_{te}=N$
    \item[(E2)] $N_{te}, N_{tr} \to \infty$, as $N \to \infty$, with $m_{te}/N_{te} \to \pi \in (0,1)$.
\end{itemize}

Let $\lambda(\hat{\rho})$ be defined as in \eqref{rhoonprojectedspace}:
\begin{align*}
    \lambda(\hat{\rho}) =\sup_{t \in [0,1]} \left[ A_0^{\hat{\rho}}(t) + A_1^{\hat{\rho}}(t) \right] -1 := \sup_{t \in [0,1]} \left[ P(\hat{\rho}(X) \leq t|\hat{\rho} ) - Q(\hat{\rho}(Y) \leq t| \hat{\rho} )\right].
\end{align*}

We first establish that $\hat{\lambda}^{\hat{\rho}}_{adapt}$ is still an asymptotic HPLB.


\HPLBwithestimatedrho*

\begin{proof}
We first note that Propositions \ref{witsearch} and \ref{Qfunctions} hold true also for a sequence $(\rho_N)_{N \in \N}$, instead of just a single arbitrary $\rho$. Thus conditioning on $\hat{\rho}$ trained on independent data, we have in particular that pointwise,
\begin{align*}
     \limsup_{N \to \infty}   \mathbb{P}(\hat{\lambda}^{\hat{\rho}} > \lambda | \hat{\rho } ) \leq \alpha.
\end{align*}
Using Fatou's Lemma \citep{dudley}, it holds that
\begin{align*}
    \limsup_{N \to \infty } \mathbb{P}(\hat{\lambda}^{\hat{\rho}} > \lambda ) &=\limsup_{N \to \infty } \E[ \mathbb{P}(\hat{\lambda}^{\hat{\rho}} > \lambda | \hat{\rho } )]\\
    &\leq  \E[ \limsup_{N \to \infty } \mathbb{P}(\hat{\lambda}^{\hat{\rho}} > \lambda | \hat{\rho } )]\\
    & \leq \alpha,
\end{align*}
proving the result.
\end{proof}



However, for $\hat{\lambda}^{\hat{\rho}}_{bayes}$ (or $\hat{\lambda}^{\rho_N}(1/2)$), we encounter a difficulty when $\lambda=0$. 


\Binomialmightnotconservelevel*

\begin{proof}
It holds that $A_0^{\rho_N}(1/2)=C/n$ and $1-A_1^{\rho_N}(1/2)=C/n$ and,
\begin{align*}
    \Prob(\hat{\lambda}^{\rho_N}(1/2) > 0 ) &= \Prob(n \hat{F}_n^{\rho_N}(1/2) - n \hat{G}_n^{\rho_N}(1/2) >   q_{1-\alpha} n\sigma(1/2) ).
\end{align*}
Define $\xi_{01}=n \hat{F}_n^{\rho_N}(1/2) \sim \mbox{Binomial}(C/n, n)$ and $\xi_{02}=n\hat{G}_n^{\rho_N}(1/2) \sim \mbox{Binomial}(C/n,n) $. Then by the Poisson convergence theorem and due to independence, $\xi_{01} - \xi_{02}$ converges in distribution to $\xi_1-\xi_2$. Additionally, 
\[
n\sigma(1/2) = \sqrt{ n A_0^{\rho_N}(1/2)(1-A_0^{\rho_N}(1/2)) + n A_1^{\rho_N}(1/2)(1-A_1^{\rho_N}(1/2))  } \to \sqrt{2 C},
\]
proving the result.
\end{proof}

For $\varepsilon \in (0,1]$ the goal in the following is to establish that for all subsequences, there exists a further subsequence $N(\ell(k)))$ such that 
\begin{equation}\label{overalreq2}
    \liminf_{k \to \infty} \mathbb{P}(\hat{\lambda} > (1-\varepsilon) \lambda_{N(\ell(k))} | \hat{\rho}_{N_{tr}(\ell(k))} ) =  1, \text{ a.s.}
\end{equation}
This suggests that for a given $\varepsilon$ we need to check the following adapted conditions on $(t_{N_{te}})_{N_{te} \geq 1}$: For any subsequence $N(\ell)$, we find a further subsequence $N(\ell(k))$, such that
\begin{align}\label{CondIId}
     \liminf_{k \to \infty} \mathcal{T}_{N(\ell(k))}:= \liminf_{k \to \infty} \frac{\lambda^{\hat{\rho}_{N_{tr}(\ell(k))}}(t_{N_{te}(\ell(k))})}{\lambda_{N(\ell(k))}}  & \geq  1-\varepsilon \text{ a.s.}  ,
\end{align}
and
\begin{subequations}\label{CondId}
\begin{align}
  \lim_{k \to \infty} \frac{\lambda_{N(\ell(k))}}{\sigma^{\hat{\rho}_{N_{tr}(\ell(k))}}(t_{N_{te}(\ell(k)))})} &= \infty \text{ a.s.} , \text{ if }    \liminf_{k \to \infty} \mathcal{T}_{N(\ell(k))} >  1-\varepsilon  \text{ a.s.}  \label{CondI.1d}, \\ 
         \lim_{k \to \infty } \frac{\lambda_{N(\ell(k))}}{\sigma^{\hat{\rho}_{N_{tr}(\ell(k))}}(t_{N_{te}(\ell(k)))})} \left( \mathcal{T}_{N(\ell(k))} -( 1-\varepsilon) \right) &= \infty \text{ a.s.} , \text{ if }  \liminf_{k \to \infty} \mathcal{T}_{N(\ell(k))} =  1-\varepsilon  \text{ a.s.}   \label{CondI.2d},
       \end{align}
\end{subequations}
where for $F^{\hat{\rho}}(t):=\Prob(\hat{\rho}(X) \leq t | \hat{\rho} )$ and $G^{\hat{\rho}}(t):=\Prob(\hat{\rho}(Y) \leq t | \hat{\rho} )$
\begin{align*}
    \lambda^{\hat{\rho}}(t)&=  F^{\hat{\rho}}(t)  -  G^{\hat{\rho}}(t) \\
    \lambda(\hat{\rho})&= \sup_{t \in [0,1]} \lambda^{\hat{\rho}}(t)\\
    \sigma^{\hat{\rho}}(t) &=\left( \frac{F^{\hat{\rho}}(t)(1-F^{\hat{\rho}}(t))}{m_{te}} + \frac{G^{\hat{\rho}}(t)(1-G^{\hat{\rho}}(t) )}{n_{te} }\right)^{1/2}.
\end{align*}

We now generalize Propositions \ref{oracleprop} and \ref{newamazingresult} to this case:



\begin{proposition}\label{newamazingresultestimatedrho}
Let $-1 < \gamma \leq 0$ and $\varepsilon_1 \in (0,1]$ fixed. Assume that $\lambda_N=N_{te}^{\gamma}$ and that (E1) and (E2) hold. Then the following is equivalent
\begin{itemize}
    \item[(i)] there exists a $(t_{N_{te}})_{N_{te} \geq1}$ such  that \eqref{CondIId} and \eqref{CondId} are true for $\varepsilon$,
    \item[(ii)] \eqref{overalreq} is true for $\hat{\lambda}^{\hat{\rho}}(t_N)$,
    \item[(iii)] \eqref{overalreq} is true for $\hat{\lambda}^{\hat{\rho}}_{adapt}$.
\end{itemize}

\end{proposition}

 \begin{proof}
 The same arguments as in Proposition \ref{oracleprop} and \ref{newamazingresult} show that for a (nonrandom) sequence $\rho_N$ and $(t_N)_{N \geq 1}$:
\begin{align}
     \liminf_{N \to \infty} \frac{\lambda^{\rho_N}(t_N)}{\lambda_N}  & \geq  1-\varepsilon \label{CondIIseq},
     \end{align}
and
\begin{subequations}\label{CondIseq}
\begin{align}
  \lim_{N } \frac{\lambda_N}{\sigma^{\rho_N}(t_N)} &= \infty, \text{ if }  \liminf_{N \to \infty} \frac{\lambda^{\rho_N}(t_N)}{\lambda_N} >  1-\varepsilon  \label{CondI.1seq}, \\ 
         \lim_{N } \frac{\lambda_N}{\sigma^{\rho_N}(t_N)} \left( \frac{\lambda^{\rho_N}(t_N)}{\lambda} -( 1-\varepsilon) \right) &= \infty, \text{ if }  \liminf_{N \to \infty} \frac{\lambda^{\rho_N}(t_N)}{\lambda_N} =  1-\varepsilon   \label{CondI.2seq},
       \end{align}
\end{subequations}
 if and only if
\begin{align*}
        \liminf_{k \to \infty} \mathbb{P}(\hat{\lambda}^{\rho_N}(t_N) > (1-\varepsilon) \lambda ) =  1,
\end{align*}
and
\begin{align*}
        \liminf_{k \to \infty} \mathbb{P}(\hat{\lambda}^{\rho_N}_{adapt} > (1-\varepsilon) \lambda ) =  1.
\end{align*}

Through conditioning, we now extend this to $\hat{\rho}$. The arguments are the same for $\hat{\lambda}^{\rho_N}_{adapt}$ and $\hat{\lambda}^{\rho_N}(t_N)$ and thus we will write $\hat{\lambda}$ to mean either of them.

First assume \eqref{CondIId} and \eqref{CondId} are true for an $\varepsilon \in (0,1]$, $\hat{\rho}$ and sequence $(t_N)_{N}$. Considering only the chosen subsequence $N(\ell(k))$ and conditioning on $(\hat{\rho}_{N(\ell(k))})_k$, this gives a sequence $\rho_k=\hat{\rho}_{N(\ell(k))}$ such that \eqref{CondIIseq} and \eqref{CondIseq} are true and by the above this means \eqref{overalreq2} holds. Since $\mathbb{P}(\hat{\lambda} > (1-\varepsilon) \lambda_{N} | \hat{\rho}_{N_{tr}} )$ is bounded, we can use Fatous lemma to obtain, that every subsequence has a further subsequence with 
\[
\liminf_{k \to \infty} \mathbb{P}(\hat{\lambda} > (1-\varepsilon) \lambda_{N(\ell(k))}) =  1.
\]
An argument by contradiction shows that then the liminf of the overall sequence must be 1 as well. Indeed assume that this is not true. Then we can find a subsequence $N(\ell)$ such that 
\[
\lim_{\ell \to \infty} \mathbb{P}(\hat{\lambda} > (1-\varepsilon) \lambda_{N(\ell)} ) =  c < 1.
\]
But then any further subsequence will have limsup strictly below $1$, contradicting the above. 


Now assume $\eqref{overalreq}$ is true. Then, by definition, $\Prob \left( \hat{\lambda} > (1-\varepsilon)  \lambda_N \right) \to 1.$ But this is also true for any subsequence and thus \eqref{overalreq2} must also hold. Indeed this simply follows from the fact that
\begin{equation}\label{usefulfact}
    \lim_{k \to \infty} \int |f_k| d \Prob = 0 \implies \lim_{k \to \infty} |f_k| = 0 \text{ a.s.}, 
\end{equation}
applied to $f_k=1-\mathbb{P}(\hat{\lambda} > (1-\varepsilon) \lambda_{N(\ell(k))} | \hat{\rho}_{N_{tr}(\ell(k))} ) \geq 0$. We quickly prove \eqref{usefulfact} for completeness below.
But with that, by the same arguments as above (connecting to a nonrandom sequence $\rho_k$), \eqref{overalreq2} implies \eqref{CondIId} and \eqref{CondId}.

It remains to prove \eqref{usefulfact}. To do so, assume there exists a set $B$, with $\Prob(B) > 0$, such that $\liminf_{k \to \infty} |f_k| > 0$ on $B$. Then again using Fatou's lemma,
\begin{align*}
     \liminf_{k \to \infty} \int |f_k| d \Prob &\geq  \int \liminf_{k \to \infty} |f_k| d \Prob\\
     & \geq \int_{B} \liminf_{k \to \infty} |f_k| d \Prob + \int_{B^c} \liminf_{k \to \infty} |f_k| d \Prob\\
     & > 0,
\end{align*}
since $g > 0$ implies that $\int g d \Prob > 0$. Thus $\liminf_{k \to \infty} \int |f_k| d \Prob > 0$, proving \eqref{usefulfact} by contraposition.

 \end{proof}

Again, Proposition \ref{newamazingresultestimatedrho} would be still valid, if $\lambda$ was replaced everywhere by $\lambda(\hat{\rho})$, assuming that $\lambda(\hat{\rho})$ converges to a limit $\lambda(\rho) \in [0,1]$ in probability. For instance, if $\lambda(\hat{\rho}) \stackrel{p}{\to} 0$, at a rate $N^{\gamma}$, $-1 < \gamma < 0$.

Proposition \ref{estimatedrhoresult} then follows directly from Proposition \ref{newamazingresultestimatedrho}.


\estimatedrhoresulttwo*

\begin{proof}
Due to consistency, it holds for all $\varepsilon \in (0,1]$ that there exists for each subsequence a further subsequence, such that 
\begin{align*}
     \liminf_{k \to \infty}  \frac{\lambda^{\hat{\rho}_{N_{tr}(\ell(k))}}(t_{N_{te}(\ell(k))})}{\lambda}  & \geq  1-\varepsilon \text{ a.s.}  ,
\end{align*}
for $t_{N_{te}(\ell(k))}:=t_{N_{tr}(\ell(k))}$. Thus for the sequence $t_{N_{tr}}$ and all $\varepsilon \in (0,1]$, \eqref{CondIId} is true. Moreover, since $\lambda$ is fixed here, \eqref{CondId} is clearly also true, proving the result.
\end{proof}

\subsection{Technical Results} \label{techlemmas}

\begin{lemma}\label{inftylemma}
Let $p \in [0,1]$, $\alpha \in (0,1)$ with $1-\alpha > 0.5$ and $p_{\varepsilon}:=(1-\varepsilon)p$. Then $m  p - q_{1-\alpha}(p_{\varepsilon},m) \asymp m p \varepsilon$. 
More generally, if $p=p_m \asymp m^{\gamma}$, $-1 < \gamma < 0$, and $p_{\varepsilon}:=(1-\varepsilon) p_m$, then
$m  p_m - q_{1-\alpha}(p_{\varepsilon},m) \asymp m p_m \varepsilon$.
\end{lemma}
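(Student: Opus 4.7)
The plan is to reduce both claims to the standard normal approximation of the binomial quantile and then compare orders. Let $B_m \sim \text{Binomial}(p_\varepsilon, m)$, so $q_{1-\alpha}(p_\varepsilon, m)$ is the $(1-\alpha)$-quantile of $B_m$. The first observation is that in both regimes the non-degeneracy condition $m p_\varepsilon(1-p_\varepsilon) \to \infty$ holds: it is immediate when $p$ is fixed, and in the second case $p_m \asymp m^\gamma$ with $\gamma > -1$ gives $m p_\varepsilon \asymp m^{1+\gamma} \to \infty$ together with $1 - p_\varepsilon \to 1$.

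Given this, I would invoke the Berry--Esseen theorem for sums of i.i.d.\ Bernoullis to conclude that the standardized binomial converges uniformly to the standard normal, and hence that its quantile converges:
\begin{equation*}
    q_{1-\alpha}(p_\varepsilon, m) = m p_\varepsilon + q_{1-\alpha}\sqrt{m p_\varepsilon(1-p_\varepsilon)} + o\bigl(\sqrt{m p_\varepsilon(1-p_\varepsilon)}\bigr).
\end{equation*}
Since $m p_m - m p_\varepsilon = m p_m \varepsilon$, substituting yields
\begin{equation*}
    m p_m - q_{1-\alpha}(p_\varepsilon, m) = m p_m \varepsilon \left(1 - \frac{q_{1-\alpha}}{\varepsilon}\,\frac{\sqrt{m p_\varepsilon(1-p_\varepsilon)}}{m p_m} + o\!\left(\frac{\sqrt{m p_\varepsilon(1-p_\varepsilon)}}{m p_m}\right)\right).
\end{equation*}

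The conclusion then follows once I check that the ratio in the parentheses tends to $0$. Using $p_\varepsilon = (1-\varepsilon)p_m$ and $1 - p_\varepsilon \to 1$, this reduces to $1/\sqrt{m p_m} \to 0$, i.e.\ $m p_m \to \infty$: obvious for fixed $p$, and for the second case $m p_m \asymp m^{1+\gamma}$ with $1 + \gamma > 0$. Hence the bracketed factor tends to $1$ and $m p_m - q_{1-\alpha}(p_\varepsilon, m) \sim m p_m \varepsilon$, which is strictly stronger than the stated $\asymp$.

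The main obstacle is a minor one: making the quantile expansion rigorous under a varying success probability $p_m$. For this I would rely on the Berry--Esseen bound, whose error term is of order $1/\sqrt{m p_m(1-p_m)}$ uniformly in the underlying distribution function, together with the standard fact that uniform convergence of distribution functions to a continuous distribution function implies convergence of quantiles. If one wanted to avoid the CLT altogether, an alternative route would be two-sided Bernstein tail bounds: they produce upper and lower bounds on the quantile of the form $m p_\varepsilon \pm C\sqrt{m p_\varepsilon \log(1/\alpha)} \pm C\log(1/\alpha)$, both of which are absorbed by the dominant deterministic gap $m p_m \varepsilon$ under $m p_m \to \infty$, yielding the desired $\asymp$ without any distributional convergence argument.
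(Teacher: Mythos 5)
Your proposal is correct and follows essentially the same route as the paper: decompose the binomial quantile as $mp_\varepsilon$ plus a fluctuation term of order $\sqrt{mp_\varepsilon(1-p_\varepsilon)}$ (the paper via the Lindeberg--Feller CLT and tightness of the standardized quantile $q_{1-\alpha}(Z_m)$, you via Berry--Esseen), and then note that this correction is of smaller order than the deterministic gap $mp_m\varepsilon$ because $mp_m \asymp m^{1+\gamma} \to \infty$. Your version is in fact slightly sharper, giving asymptotic equivalence rather than just $\asymp$, and the Bernstein-bound alternative you sketch would also suffice.
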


\begin{proof}
Let $p=\delta_m \asymp m^{\gamma}$, for $-1 <  \gamma \leq 0 $, where $\gamma=0$ indicates the fixed $p$ case. Writing $q_{1-\alpha}(p_{\varepsilon},m)=q_{1-\alpha}(\Lambda)$, where $\Lambda \sim \mbox{Binomial}(p_{\varepsilon},m)$, it holds that
\[
\frac{q_{1-\alpha}(\Lambda)-m p_{\varepsilon}}{\sqrt{m p_{\varepsilon} (1-p_{\varepsilon})}} = q_{1-\alpha}(Z_m),
\]
where $Z_m:=(\Lambda -mp_{\varepsilon})/\sqrt{m p_{\varepsilon} (1-p_{\varepsilon}) }$ and $q_{1-\alpha}(Z_m)$ is the $1-\alpha$ quantile of the distribution of $Z_m$. By the Lindenberg-Feller central limit theorem, $Z_m$ converges in distribution to $\mathcal{N}(0,1)$ and is thus uniformly tight, i.e. $Z_m=O_{\mathbb{P}}(1)$. Consequently, it must hold that
\[
0 < \frac{q_{1-\alpha}(\Lambda)-mp_{\varepsilon}}{\sqrt{mp_{\varepsilon} (1-p_{\varepsilon})}} = q_{1-\alpha}(Z_m) \asymp 1,
\]
which means $q_{1-\alpha}(\Lambda) - mp_{\varepsilon} \asymp \sqrt{mp_{\varepsilon} (1-p_{\varepsilon})}$. Writing
\[
\Delta_m := m  p_m - q_{1-\alpha}(p_{\varepsilon},m) =  m  p_m - mp_{\varepsilon} - (q_{1-\alpha}(p_{\varepsilon},m) - mp_{\varepsilon}) = m p_m \varepsilon - (q_{1-\alpha}(\Lambda) - mp_{\varepsilon}),
\]
we see that $\Delta_m \asymp  m p_m \varepsilon$.
\end{proof}

As we do not constrain the possible alternatives $P,Q$ and sequences $(t_N)_{N \geq 1}$, some proofs have several cases to consider. In an effort to increase readability we will summarize these different cases here for reference:
We first introduce a ``nuisance condition''. This condition arises when $(t_N)_{N \leq 1}$ or the sequence of alternatives is such that the variance $\sigma(t_N)$ converges to zero fast, namely if
\begin{align}\label{NC}
    \liminf_{N \to \infty} N \sigma(t_N) < +\infty. \tag{\bf{NC}}
\end{align}

The case in which we are mainly interested is however is the negation of \eqref{NC},
\begin{align}\label{MC}
    \lim_{N \to \infty} N \sigma(t_N) = +\infty \tag{\bf{MC}}
\end{align}
A special case of that is the following 
\begin{align}\label{MCE}
   \text{either } F(t_N)(1-F(t_N))=0 \text{ or } G(t_N)(1-G(t_N))=0 \text{ for infinitely many $N$} \tag{\bf{MCE}}.
\end{align}

We first show an important limiting result, in the case \eqref{MC}, on which much of our results are based:

\begin{lemma} \label{Gaussianresult}
Let $-1 < \gamma \leq 0$, where $\gamma=0$ constitutes the constant case $\lambda_N=\lambda$. Then for any $\rho: \X \to I$ and any sequence $(t_N)_{N\geq 1} \subset I$ such that \eqref{MC} holds,
\begin{equation}\label{convergenceindist}
    \mathcal{Z}_N:=\frac{\hat{F}_m(t_N) - \hat{G}_n(t_N) - (F(t_N) - G(t_N))}{\sigma(t_N)} \stackrel{D}{\to} \mathcal{N}(0,1).
\end{equation}
\end{lemma}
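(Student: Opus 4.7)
The plan is to apply the Lindeberg--Feller central limit theorem directly to the full sum in the numerator, exploiting the fact that each summand is uniformly of order $1/N$ while $\sigma(t_N)$ decays strictly slower than $1/N$ under \eqref{MC}.

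First, I would rewrite the numerator as a triangular array of independent bounded mean-zero variables. Set $X_{i,N} = \Ind\{\rho(X_i) \leq t_N\}$ for $1 \leq i \leq m$ and $Y_{j,N} = \Ind\{\rho(Y_j) \leq t_N\}$ for $1 \leq j \leq n$, so that $\E[X_{i,N}] = F(t_N)$ and $\E[Y_{j,N}] = G(t_N)$. Define
\[
W_{i,N} := \frac{X_{i,N} - F(t_N)}{m} \quad (1 \leq i \leq m), \qquad W_{m+j,N} := -\frac{Y_{j,N} - G(t_N)}{n} \quad (1 \leq j \leq n).
\]
These are independent across $k = 1,\ldots,N$, centered, and satisfy $\sum_{k=1}^N W_{k,N} = \hat{F}_m(t_N) - \hat{G}_n(t_N) - (F(t_N) - G(t_N))$ with
\[
\sum_{k=1}^{N} \mathrm{Var}(W_{k,N}) = \frac{F(t_N)(1-F(t_N))}{m} + \frac{G(t_N)(1-G(t_N))}{n} = \sigma(t_N)^2.
\]

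Next, I would verify the Lindeberg condition: for every $\varepsilon > 0$,
\[
L_N(\varepsilon) := \frac{1}{\sigma(t_N)^2}\sum_{k=1}^{N} \E\bigl[W_{k,N}^2\, \Ind\{|W_{k,N}| > \varepsilon\, \sigma(t_N)\}\bigr] \longrightarrow 0.
\]
Here the key observation is uniform boundedness: $|W_{k,N}| \leq 1/m$ for $k \leq m$ and $|W_{k,N}| \leq 1/n \leq 1/m$ for $k > m$ (using $m \leq n$). Under \eqref{MC}, $N\sigma(t_N) \to \infty$, and since $m/N \to \pi \in (0,1)$, this yields $m\sigma(t_N) \to \infty$. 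Hence, for any fixed $\varepsilon > 0$ we eventually have $\varepsilon\, \sigma(t_N) > 1/m \geq |W_{k,N}|$ almost surely for all $k$, so each indicator in $L_N(\varepsilon)$ vanishes identically and $L_N(\varepsilon) = 0$ for all large $N$.

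Applying the Lindeberg--Feller CLT then gives $\mathcal{Z}_N = \sum_{k=1}^{N} W_{k,N} / \sigma(t_N) \stackrel{D}{\to} \mathcal{N}(0,1)$, as required. There is essentially no hard step in this argument: the only subtlety is the passage from $N\sigma(t_N) \to \infty$ to $m\sigma(t_N) \to \infty$, which relies on $\pi > 0$ and also incidentally guarantees $\sigma(t_N) > 0$ eventually, so that $\mathcal{Z}_N$ is well-defined along the tail. The approach bypasses any need to separately track the relative sizes of $F(t_N)(1-F(t_N))/m$ and $G(t_N)(1-G(t_N))/n$, or to invoke a subsequence argument.
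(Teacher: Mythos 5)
Your proof is correct, and it takes a genuinely different (and leaner) route than the paper's. The paper also relies on the Lindeberg--Feller CLT, but it first decomposes $\mathcal{Z}_N$ as $\omega_N \cdot (\hat{F}_m(t_N)-F(t_N))/\sigma_F - \sqrt{1-\omega_N^2}\cdot(\hat{G}_n(t_N)-G(t_N))/\sigma_G$ with $\omega_N = \sigma_F/\sigma(t_N)$, applies the CLT to each sample separately, and then has to work around two complications: the weights $\omega_N$ need not converge, which forces a subsequence-of-subsequences argument, and one of the two variance components may stay bounded or even vanish identically (the case \eqref{MC} together with \eqref{MCE}), which requires separate case analysis. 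You avoid all of this by treating the numerator as a single triangular array of $N$ independent, centered, uniformly bounded summands whose row variance is exactly $\sigma(t_N)^2$: since $|W_{k,N}| \leq 1/m$ almost surely and \eqref{MC} together with $m/N \to \pi \in (0,1)$ gives $m\,\sigma(t_N) \to \infty$, the Lindeberg sum is identically zero for all large $N$, and the conclusion follows in one application of the theorem. This buys a shorter argument with no subsequence extraction and no degenerate-variance cases (the only quantity that must be nondegenerate is the total variance $\sigma(t_N)^2$, which \eqref{MC} guarantees eventually); what the paper's decomposition offers in exchange is some insight into which of the two samples drives the Gaussian limit, but that is not needed for the statement. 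Your handling of the one delicate point --- passing from $N\sigma(t_N) \to \infty$ to $m\sigma(t_N) \to \infty$ via $\pi > 0$, and noting that $\mathcal{Z}_N$ is well defined along the tail --- is also correct.
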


\begin{proof}
Let
\[
\sigma_F:=\sqrt{ \frac{F(t_N)(1-F(t_N))}{m}} \hspace{0.5cm} \text{and} \hspace{0.5cm}
\sigma_G:=\sqrt{ \frac{G(t_N)(1-G(t_N))}{m}},
\]
so that we may write $\sigma(t_N)=\sqrt{ \sigma_F^2 +  \sigma_G^2 }$. From \eqref{MC} we require $m \sigma_F \to \infty$ or $n \sigma_G \to \infty$. 

By the Lindenberg-Feller CLT (see e.g., \cite{vaart_1998}), it holds for $N \to \infty$ (and thus $m,n \to \infty$),
\begin{align*}
&\frac{1}{\sigma_F}(\hat{F}_m(t_N)  - F(t_N)) \stackrel{D}{\to} \mathcal{N}(0,1), \text{ if } m \sigma_F \to \infty \\ 
&\frac{1}{\sigma_G}(\hat{G}_m(t_N)  - G(t_N))  \stackrel{D}{\to} \mathcal{N}(0,1), \text{ if } n \sigma_G \to \infty 
\end{align*}


We write
\begin{align*}
   \mathcal{Z}_N &= \frac{\hat{F}_m(t_N) - F(t_N) - (\hat{G}_n(t_N) - G(t_N))}{\sigma(t_N)} \\
  &= \frac{\hat{F}_m(t_N) - F(t_N)} {\sigma(t_N)}  - \frac{(\hat{G}_n(t_N) - G(t_N))}{\sigma(t_N)} \\
   &= \frac{\hat{F}_m(t_N) - F(t_N)} {\sigma_F} \frac{\sigma_F}{\sigma(t_N)}  - \frac{(\hat{G}_n(t_N) - G(t_N))}{\sigma_G} \frac{\sigma_G}{\sigma(t_N)}.
\end{align*}

\noindent Now as,
\begin{align*}
    \frac{\sigma_F}{\sigma(t_N)}  = \sqrt{\frac{\sigma_F^2}{\sigma_F^2 + \sigma_G^2}}, \text{ and }  \frac{\sigma_G}{\sigma(t_N)}= \sqrt{\frac{\sigma_G^2}{\sigma_F^2 + \sigma_G^2}}, 
\end{align*}
we can define $\omega_N:=\sigma_F/\sigma(t_N)$, so that
\begin{align*}
   \mathcal{Z}_N  &= \frac{\hat{F}_m(t_N) - F(t_N)} {\sigma_F}  \omega_N  - \frac{(\hat{G}_n(t_N) - G(t_N))}{\sigma_G} \sqrt{1 - \omega_N^2}.
\end{align*}
Had $\omega_N$ a limit, say $\lim_N \omega_N:=a \in [0,1]$ and if both $m \sigma_F \to \infty$ and  $m \sigma_G \to \infty$ were true, it would immediately follow from classical results (see e.g., \cite[Chapter 2]{vaart_1998}) that 
$\mathcal{Z}_N \stackrel{D}{\to} \mathcal{N}(0,1)$. This is not the case as the limit of $\omega_N$ might not exist and either $\limsup_{N \to \infty} m \sigma_F < \infty$ or $\limsup_{N \to \infty} n \sigma_G < \infty$. However since $\omega_N \in [0,1]$ for all $N$, it possesses a subsequence with a limit in $[0,1]$. More generally, every subsequence $(\omega_{N(k)})_k$ possesses a further subsequence $(\omega_{N(k(\ell))})_\ell$ that converges to a limit $a \in [0,1]$. This limit depends on the specific subsequence, but for any such converging subsequence it still holds as above that $\mathcal{Z}_{N(k(\ell))} \stackrel{D}{\to} \mathcal{N}(0,1)$. Indeed, if both $m \sigma_F \to \infty$ and $n \sigma_G \to \infty$ this is immediate from the above. If, on the other hand, $\liminf_{N \to \infty} m \sigma_F < \infty$, then $[\hat{F}_m(t_{N(k(\ell))}) - F(t_{N(k(\ell))})]/\sigma_F \stackrel{D}{\to} \mathcal{N}(0,1)$ might not be true. However, if we assume that \eqref{MCE} does not hold, for the chosen subsequence 
\[
\frac{\sigma_F^2}{\sigma(t_{N(k(\ell))})^2} = \omega_{N(k(\ell))}^2  \to a^2 \in [0,1], 
\]
and it either holds that $a=0$ in which case the first part of $\mathcal{Z}_{_{N(k(\ell))}}$ is negligible or $a > 0$, in which case it must hold that
$\sigma_F \asymp \sigma(t_{N(k(\ell))})$ and thus $N(k(\ell)) \sigma_F \to \infty$, allowing for $[\hat{F}_m(t_{N(k(\ell))}) - F(t_{N(k(\ell))})]/\sigma_F \stackrel{D}{\to} \mathcal{N}(0,1)$. The symmetric argument applies if $\liminf_{N \to \infty} m \sigma_G < \infty$. Now assume \eqref{MCE} holds and for a given subsequence $(t_{k})_k$, it is not possible to find a subsequence, such that $ F(t_{k(\ell)})(1-F(t_{k(\ell)}))> 0 $ for all but finitely many $\ell$. Since for all subsequences $k(\ell)\sigma(t_{k(\ell)}) \to \infty$, it must hold that $n(k(\ell))\sigma_G \to \infty$. In particular, we may choose the subsequence such that $ F(t_{k(\ell)})(1-F(t_{k(\ell)}))= 0 $ for all but finitely many $N$ and in this case:
\[
\hat{F}_m(t_{k(\ell)}) - F(t_{k(\ell)}) = 0 \text{ a.s.  and } \frac{\hat{G}(t_{k(\ell)}) - G(t_{k(\ell)})}{\sigma_G} \stackrel{D}{\to} \mathcal{N}(0,1)
\]
both are true, implying \eqref{convergenceindist}. The symmetric argument holds if instead $G(t_N)(1-G(t_N))=0$ for infinitely many $N$, but $N F(t_N)(1-F(t_N))\to \infty$. 

Thus we have shown that for \emph{any} subsequence of $\mathcal{Z}_N$, there exists a further subsequence converging in distribution to $\mathcal{N}(0,1)$. Assume that despite this, \eqref{convergenceindist} is not true. Then, negating convergence in distribution in this particular instance, means there exists $z \in \R$ such that the cumulative distribution function of $\mathcal{Z}_N$, $F_{\mathcal{Z}_N}$, has $\limsup_{N \to \infty} F_{\mathcal{Z}_N}(z) \neq \Phi(z) $. By the properties of the limsup, there exists a subsequence $\lim_{k \to \infty} F_{\mathcal{Z}_{N(k)}}(z) = \limsup_{N \to \infty} F_{\mathcal{Z}_N}(z) \neq \Phi(z)$. But then no further subsequence of $F_{\mathcal{Z}_{N(k)}}(z)$ converges to  $\Phi(z)$, a contradiction.
\end{proof}

The next lemma ensures that we can for all intents and purposes ignore sequences $(t_N)_{N \geq 1}$ for which \eqref{NC} is true.

\begin{lemma}\label{ignorebadcase}
Let $-1 < \gamma < 0 $ and $\varepsilon \in (0,1]$ arbitrary. If for a sequence $(t_N)_{N\geq1}$ and $\rho=\rho^*$, \eqref{NC} holds, then
\begin{itemize}
    \item[(I)] \eqref{CondII} or \eqref{CondI} is not true,
     \item[(II)] \eqref{overalreq} is not true for $\lambda=\hat{\lambda}^{\rho^*}(t_N)$.
\end{itemize}
Furthermore, if for $-1 < \gamma < 0 $, a sequence $(t_N)_{N\geq1}$ and $\rho: \X \to I$, \eqref{NC} holds then, 
\begin{itemize}
    \item[(III)] $\limsup_{N \to \infty}  \Prob \left( \hat{\lambda}^{\rho}(t_N)  > \lambda \right) = 0$.
\end{itemize}
(III) is also true for the constant case, $\gamma=0$, as long as $\lambda > 0$.
\end{lemma}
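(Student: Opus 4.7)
The plan is to pass to a subsequence along which (NC) is most pronounced, and then leverage the structural dichotomy it forces on $F(t_N)$ and $G(t_N)$. Concretely, under (NC) there exists $(N_k)$ with $N_k \sigma(t_{N_k}) \leq C$, and the identity $\sigma(t)^2 = F(1-F)/m + G(1-G)/n$ yields $F(t_{N_k})(1-F(t_{N_k}))$ and $G(t_{N_k})(1-G(t_{N_k}))$ both of order $O(1/N_k)$. After a further extraction, each of $F(t_{N_k}), G(t_{N_k})$ converges to a point in $\{0,1\}$. The basic inequality $|\lambda(t_N)| \leq \lambda_N$ from \eqref{Basicinequality} excludes the mixed cases $(F \to 0, G \to 1)$ and $(F \to 1, G \to 0)$ whenever $\lambda_N \to 0$, leaving only the two cases in which $F$ and $G$ approach the same endpoint; in both, $|\lambda(t_{N_k})| = O(1/N_k)$ and $\sigma(t_{N_k}) = O(1/N_k)$.

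For (I), this immediately gives $\lambda(t_{N_k})/\lambda_{N_k} = O(N_k^{-(1+\gamma)}) \to 0$, so $\liminf_N \lambda(t_N)/\lambda_N \leq 0$. For $\varepsilon \in (0,1)$ this violates \eqref{CondII}. For $\varepsilon = 1$, one instead rules out \eqref{CondI.2}: the ratio $\lambda(t_{N_k})/\sigma(t_{N_k})$ is a bounded ratio of two $O(1/N_k)$ quantities (with the paper's convention that $\sigma(t_N) = 0$ for infinitely many $N$ falsifies \eqref{CondI.2}), so the required limit $+\infty$ is unreachable.

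For (II) with $\varepsilon \in (0,1)$ and for (III), a Chebyshev bound on $\hat{F}_m(t_N) - \hat{G}_n(t_N) - \lambda(t_N)$, whose variance is $\sigma(t_N)^2 = O(1/N^2)$, suffices: $\hat{\lambda}^\rho(t_N) = \lambda(t_N) - q_{1-\alpha}\sigma(t_N) + O_P(1/N) = O_P(1/N)$, which must exceed $(1-\varepsilon)\lambda_N \asymp N^\gamma$ (for (II)) or $\lambda_N$ (for (III)), both of order $N^\gamma \gg 1/N$ since $\gamma > -1$. For (III) with general $\rho$ one must additionally cover the residual subcase $F(t_N) \to 1, G(t_N) \to 0$, which the above exclusion permits only when $\gamma = 0$ and $\lambda = 1$; but then $\hat{\lambda}^\rho(t_N) \leq \hat{F}_m - \hat{G}_n \leq 1 = \lambda$ pointwise, so $\{\hat{\lambda} > \lambda\}$ has probability zero.

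The main obstacle is (II) with $\varepsilon = 1$, where the threshold degenerates to $0$ and a raw Chebyshev bound is too crude to give $\liminf \mathbb{P}(\hat{\lambda}^{\rho^*}(t_N) \leq 0) > 0$. The plan is a Poisson-limit argument: extract a further subsequence along which $m F(t_{N_k}) \to c_1 \in [0,\infty)$ (bounded since $m F(1-F) \leq C^2 m^2/N_k^2 = O(1)$ by $m/N \to \pi$), $n G(t_{N_k}) \to c_2 \in [0,\infty)$, and $N_k \sigma(t_{N_k}) \to c \geq 0$. Then $(m\hat{F}_m(t_{N_k}), n\hat{G}_n(t_{N_k}))$ converges jointly in distribution to independent $\mathrm{Poisson}(c_1)$ and $\mathrm{Poisson}(c_2)$ variables, so $N_k \hat{\lambda}^{\rho^*}(t_{N_k}) \stackrel{D}{\to} P_1/\pi - P_2/(1-\pi) - q_{1-\alpha} c$; the event $\{P_1 = 0\}$ has probability $e^{-c_1} > 0$ and on it the limit is $\leq 0$, giving $\lim_k \mathbb{P}(\hat{\lambda}^{\rho^*}(t_{N_k}) > 0) \leq 1 - e^{-c_1} < 1$, as required.
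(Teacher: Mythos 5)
Your overall strategy --- extract a subsequence on which $N_k\sigma(t_{N_k})$ stays bounded, deduce that $F(t_{N_k})$ and $G(t_{N_k})$ cluster at $\{0,1\}$ with the mixed endpoints excluded via $|\lambda(t_N)|\leq \lambda_N$, then use Chebyshev for (II) with $\varepsilon<1$ and for (III), and a ``no observations in the tail'' argument for (II) with $\varepsilon=1$ --- is essentially the paper's, and most steps are sound. Two steps, however, are genuine gaps as written. First, in (I) for $\varepsilon=1$ you dismiss \eqref{CondI.2} because ``$\lambda(t_{N_k})/\sigma(t_{N_k})$ is a bounded ratio of two $O(1/N_k)$ quantities''; that inference is invalid (compare $1/N_k$ over $1/N_k^2$): boundedness does not follow from the two $O(1/N_k)$ rates alone. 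The claim is true, but for a structural reason you did not state: in the case $F(t_{N_k}),G(t_{N_k})\to 0$ one has $\lambda(t_{N_k})\leq F(t_{N_k})$ and $\sigma(t_{N_k})\geq \sqrt{F(t_{N_k})(1-F(t_{N_k}))/m}$, hence $\lambda(t_{N_k})/\sigma(t_{N_k})\leq \sqrt{m F(t_{N_k})/(1-F(t_{N_k}))}=O(1)$ because $N_k F(t_{N_k})=O(1)$, and symmetrically with $1-G(t_{N_k})$ when $F,G\to 1$. This is exactly the delicate point the paper treats by extracting a subsequence with $kA_0(t_k)\to a<\infty$ and bounding $\lambda(t_k)/\sigma(t_k)$ by a quantity $\asymp\sqrt{kA_0(t_k)}\to\sqrt{a}$.

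Second, your Poisson-limit argument for (II) with $\varepsilon=1$ only covers the case $F,G\to 0$: the boundedness of $mF(t_{N_k})$ you invoke follows from $mF(1-F)=O(1)$ only when $1-F$ stays away from $0$; in the case $F,G\to 1$ one has $mF(t_{N_k})\to\infty$ and the extraction of $c_1<\infty$ fails, so that case must be handled symmetrically (e.g.\ via $\Prob(\hat{\lambda}^{\rho^*}(t_{N_k})>0)\leq \Prob\bigl(n(1-\hat{G}_n(t_{N_k}))\geq 1\bigr)\to 1-e^{-c}<1$ with $c=\lim_k n(1-G(t_{N_k}))$), which is what the paper does by splitting into its cases (1') and (3'). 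Relatedly, passing from the weak limit of $N_k\hat{\lambda}^{\rho^*}(t_{N_k})$ to a bound on $\Prob(\hat{\lambda}^{\rho^*}(t_{N_k})>0)$ needs care because the limit law can put an atom at $0$ (e.g.\ $c_1=c_2=c=0$); it is cleaner to argue directly, as the paper does, from $\{\hat{\lambda}^{\rho^*}(t_{N_k})>0\}\subseteq\{m\hat{F}_m(t_{N_k})\geq 1\}$ and $\Prob(m\hat{F}_m(t_{N_k})\geq 1)=1-(1-F(t_{N_k}))^m\to 1-e^{-c_1}<1$. Finally, note that your argument (like the paper's) controls the exceedance probability only along the subsequence on which $N\sigma(t_N)$ is bounded; this is all that is needed for (I), (II) and for the way (III) is used, but it is worth stating explicitly since (III) is phrased as a limsup over the full sequence.
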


\begin{proof}
(I): If \eqref{NC} is true, then $N^{\beta} \sigma(t_N) \to 0 $, for any $\beta \in [0,1)$. Indeed, assume there exists $\beta \in [0,1)$ such that $\liminf_{N \to \infty} N^{\beta} \sigma(t_N)  > 0 $. Then
\[
\liminf_{N \to \infty} N \sigma(t_N) \geq  +\infty, 
\]
In particular, it must hold that
\begin{align*}
    F(t_N)(1-F(t_N)) = o(N^{\zeta}) \text{ and } G(t_N)(1-G(t_N)) = o(N^{\zeta}),
\end{align*}
for all $\zeta \in [-1,0)$. There are four possibilities for this to be true:
\begin{itemize}
    \item[(1)]  $ F(t_N) = o(N^{\zeta})$, $  G(t_N) = o(N^{\zeta})$. 
    \item[(2)] $F(t_N) = o(N^{\zeta})$, $1- G(t_N) = o(N^{\zeta})$. 
    \item[(3)] $ (1- F(t_N)) = o(N^{\zeta})$, $(1- G(t_N)) =  o(N^{\zeta})$. 
    \item[(4)] $(1- F(t_N)) = o(N^{\zeta})$, $ G(t_N) = o(N^{\zeta})$. 
\end{itemize}

As (2) and (4) imply that $\lambda(t_N) \to -1$ and $\lambda_N \geq \lambda(t_N) \to 1$ respectively, they are not relevant in our framework.
Thus \eqref{NC} directly implies that either (1) or (3) is true and both of them imply $\lambda(t_N)=o(N^{\zeta})$ for all $\zeta \in (-1,0)$. Consequently, \eqref{CondII} cannot be true for $\varepsilon < 1$.

For $\varepsilon=1$ we slightly strengthen the relevant cases (1) and (3):
\begin{itemize}
    \item[(1')]$ \liminf_{N \to \infty} N A_0(t_N) < \infty$, $ \liminf_{N \to \infty} N (1-A_1(t_N)) < \infty$,
    \item[(3')] $  \liminf_{N \to \infty} N (1- A_0(t_N)) < \infty$, $ \liminf_{N \to \infty} N A_1(t_N) < \infty$.
\end{itemize}
It's clear from the above, that if \eqref{NC} holds, then one of the two has to hold. We will now show that, even though \eqref{CondII} is true in this case, \eqref{CondI.2} is not. Indeed, it was mentioned in Section \ref{powersec} that in case of \eqref{MCE}, \eqref{CondI.2} is defined to be false. Thus, we may assume $\sigma(t_N)$ is bounded away from zero for all finite $N$. Assume that \eqref{CondI.2} is true, i.e.
\[
\frac{\lambda(t_N)}{\sigma(t_N)} \to \infty.
\]
This must then also hold for any subsequence. Now assume (1') is true, and choose a subsequence $(t_k)_{k \in \N}:=(t_{N(k)})_{k \in \N}$ with $\lim_{k \to \infty} k A_0(t_k)=\liminf_{N \to \infty} N A_0(t_N):=a \in [0,\infty)$. Then
\[
\frac{\lambda(t_k)}{\sigma(t_k)} \leq \frac{A_0(t_k)}{\sigma(t_k)} \leq \frac{\sqrt{m(k)} A_0(t_k)}{\sqrt{A_0(t_k)(1-A_0(t_k))} } \asymp \sqrt{k A_0(t_k)}  \to \sqrt{a} < \infty,
\]
a contradiction. Similarly, if (2') is true, we find a subsequence such that $\lim_{k \to \infty} k A_1(t_k):=a \in [0, \infty)$ and bound
\[
\frac{\lambda(t_k)}{\sigma(t_k)} \leq \frac{A_1(t_k)}{\sigma(t_k)} \leq \frac{\sqrt{n(k)} A_1(t_k)}{\sqrt{A_1(t_k)(1-A_1(t_k))} } \asymp \sqrt{k A_1(t_k)}  \to \sqrt{a} < \infty.
\]
Thus \eqref{CondI.2} cannot be true.

(II) and (III): 
Consider first $\varepsilon \in [0,1)$ and $-1 < \gamma \leq 0 $. \eqref{NC} implies for any $\rho$:
\begin{equation}
    \lambda_N^{-1} (\hat{F}(t_N) - \hat{G}(t_N) - \lambda(t_N))  \stackrel{p}{\to} 0.
\end{equation}
Indeed by a simple Markov inequality argument for all $\delta > 0$:
\begin{align*}
    \Prob \left(  \lambda_N^{-1} (\hat{F}(t_N) - \hat{G}(t_N) - \lambda(t_N))  >  \delta  \right) \leq \frac{\lambda_N^{-2} \sigma(t_N)^2}{\delta} \asymp \frac{(N^{-\gamma} \sigma(t_N))^2}{\delta} \to 0,
\end{align*}
since $-\gamma \in [0,1)$. Additionally, from the argument in (I), $\lambda_N^{-1}\sigma(t_N)  \to 0$ and $\frac{\lambda(t_N)}{\lambda_N} \to 0 $. Consequently, for any $\varepsilon \in [0,1)$
\begin{align*}
        &\mathbb{P}(\hat{F}(t_N) - \hat{G}(t_N) - q_{1-\alpha} \sigma(t_N) > (1-\varepsilon) \lambda_N) \\
         &=\mathbb{P}(\hat{F}(t_N) - \hat{G}(t_N) - \lambda(t_N) - q_{1-\alpha} \sigma(t_N) > (1-\varepsilon) \lambda_N  - \lambda(t_N)) \\
         &=\mathbb{P}(\lambda_N^{-1} (\hat{F}(t_N) - \hat{G}(t_N) - \lambda(t_N)) - q_{1-\alpha} \lambda_N^{-1}\sigma(t_N) + \frac{\lambda(t_N)}{\lambda_N}  > (1-\varepsilon)  )\\
         & \to 0.
\end{align*}
Consequently, \eqref{overalreq} is false for any $\varepsilon \in [0,1)$ and (II) and (III) hold. The case $\varepsilon=1$ needs special care: Assume that despite \eqref{NC}, $ \Prob(\hat{\lambda}(t_N)  > 0 ) \to 1$ holds true. We consider the two possible cases (1') and (3') in turn: If (1') is true, we write:
\begin{align}
    \Prob(\hat{\lambda}(t_N)  > 0 ) \leq  \Prob(\hat{F}(t_N)  > 0 ) = \Prob( m \hat{F}(t_N)  > 0 ) = \Prob(V_{m,t_N}  > 0 ),
\end{align}
where $V_{m,t_N}:= m \hat{F}(t_N) \sim \mbox{Binomial}(A_0(t_N), m)$. Since $ \Prob(\hat{\lambda}(t_N)  > 0 ) \to 1$, this is true for any subsequence $\hat{\lambda}(t_{N(k)})$ as well. In particular, we may choose the subsequence $(t_{N(k)})_{k \geq 1}$ with $\lim_{k} N(k) A_0(t_{N(k)})=\liminf_{N \to \infty} N A_0(t_N) := a \in [0, \infty)$. Renaming the subsequence $(k A_0(t_{k}))_{k \geq 1}$ for simplicity, we find $\limsup_{k \to \infty }  k A_0(t_k) \leq a$, or $A_0(t_k) = O(k^{-1})=O(m(k)^{-1})$, since by assumption $m(k)/k \to \pi \in (0,1)$. But then
\begin{align*}
    \Prob(\hat{\lambda}(t_k)  > 0 ) \leq \Prob(V_{m(k),t_k}  > 0 ) = 1- (1-A_0(t_k))^{m(k)}
\end{align*}
and 
\[
\liminf_{k \to \infty} (1-A_0(t_k))^{m(k)} \geq \liminf_{k \to \infty}  (1-\frac{a}{m(k)})^{m(k)} = \exp(-a) > 0. 
\]
Thus, $\limsup_{k \to \infty } \Prob(\hat{\lambda}(t_k)  > 0 ) < 1$, a contradiction.

If (3') is true, then $\liminf_{N \to \infty} N A_1(t_N) < +\infty$ and similar arguments applied to
\begin{align}
    \Prob(\hat{\lambda}(t_N)  > 0 ) \leq  \Prob(1 - \hat{G}(t_N)  > 0 ) = \Prob( n -  n\hat{G}(t_N)  > 0 ) = \Prob(V_{n,t_N}  > 0 ),
\end{align}
where now $V_{n,t_N}:= \sum_{i=1}^n \Ind{\{ \rho(Y_i) > t_N \}} \sim \mbox{Binomial}(A_1(t_N), n)$, give
\begin{align*}
    \limsup_{k \to \infty } \Prob(\hat{\lambda}(t_k)  > 0 ) \leq \limsup_{k \to \infty} \Prob(V_{n(k),t_k}  > 0 ) = 1- \exp(-a) < 1.
\end{align*}
This again contradicts $ \Prob(\hat{\lambda}(t_N)  > 0 ) \to 1$.

\end{proof}

Lemma \ref{ignorebadcase} also immediately implies for $\hat{\lambda}(t_N)$ that if \eqref{overalreq} or \eqref{CondII} are true, then $\lim_{N} N \sigma(t_N) = + \infty$ must hold.

\begin{lemma}\label{simplified}
Let $-1 < \gamma < 0$ be fixed and as above $\lambda_N \asymp N^{\gamma}$. Define for $(t_N)_{N \geq 1}$, $z(t_N)$ as in \eqref{zdef} and $Q_{m,n, \alpha}$, $\tilde{Q}$ as in \eqref{Qmndef} and \eqref{Qtildedef}. Assume that for the given $\gamma$,
\begin{equation} \label{A0A1def}
    A_0(t_N)(1-A_0(t_N)) = o(N^{2\gamma + 1}) \text{ and } A_1(t_N)(1-A_1(t_N)) = o(N^{2\gamma + 1}),
\end{equation}
then for $c \in (0,+\infty)$,
\begin{align}\label{QQbounding}
   c + o_{\Prob}(1) \leq \frac{ Q_{m,n, \alpha}(z(t_N), \lambda_{\varepsilon}) - m A_0 }{\tilde{Q}(\lambda_\varepsilon) - m A_0  }  \leq   \frac{\sup_{\tilde{\lambda}} Q_{m,n, \alpha}(z(t_N), \tilde{\lambda}) - m A_0 }{\tilde{Q}(\lambda_\varepsilon) - m A_0  } \leq  \frac{1}{c} + o_{\Prob}(1).
\end{align}

\end{lemma}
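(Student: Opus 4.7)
The plan is to expand both $Q_{m,n,\alpha}(z(t_N),\lambda_\varepsilon) - mA_0(t_N)$ and its supremum analogue around the common deterministic target $\tilde Q(\lambda_\varepsilon) - mA_0(t_N) = m(1-\pi)\bigl(\lambda_\varepsilon - \lambda(t_N)\bigr)$, obtained from \eqref{Qtildedef} via the identity $\lambda(t_N) = A_0(t_N) + A_1(t_N) - 1$. In the regime where this lemma is invoked, $\lambda(t_N) > \lambda_\varepsilon$ eventually, so the target is strictly negative with magnitude $\asymp m\lambda_N \asymp N^{1+\gamma}$, and the ratio is well-defined with a definite sign.

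For the expansion itself I would proceed term by term in \eqref{Qmndef}. Lemma~\ref{inftylemma} gives $q_{1-\alpha/3}(\lambda_\varepsilon, m) = m\lambda_\varepsilon(1 + o(1))$, from which $m(\lambda_\varepsilon)/N(\lambda_\varepsilon) = \pi + o(1)$ and $n(\lambda_\varepsilon)/N(\lambda_\varepsilon) = 1-\pi + o(1)$. A Chebyshev bound on the two independent binomial sums constituting $z(t_N) = m\hat F(t_N) + n\hat G(t_N)$ yields
\[
z(t_N) \;=\; mA_0 + n(1-A_1) + O_\Prob\!\Bigl(\sqrt{mA_0(1-A_0) + nA_1(1-A_1)}\Bigr).
\]
Substituting these into \eqref{Qmndef}, using $\pi n = (1-\pi)m(1+o(1))$ (from $m/N \to \pi$), and telescoping via $\lambda(t_N) = A_0 - (1-A_1)$, one finds that the leading deterministic contribution to $Q_{m,n,\alpha}(z(t_N),\lambda_\varepsilon) - mA_0$ is precisely $\tilde Q(\lambda_\varepsilon) - mA_0$.

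It then remains to control the remainder $E_N := Q_{m,n,\alpha}(z(t_N),\lambda_\varepsilon) - \tilde Q(\lambda_\varepsilon)$, which splits into (i) the stochastic fluctuation of $z(t_N)$, of order $O_\Prob(\sqrt{N \cdot N^{2\gamma+1}}) = O_\Prob(N^{1/2+\gamma})$ under the hypothesis; (ii) deterministic errors from the weight approximations, of comparable or smaller size; and (iii) the Finner correction $\beta_{\alpha,m(\lambda_\varepsilon)}\sqrt{\cdots}$. Parts (i) and (ii) are readily $o(N^{1+\gamma}) = o(|\tilde Q - mA_0|)$. The main obstacle I expect will be part (iii), because $\beta \asymp \sqrt{\log\log m}$ carries an a priori unbounded factor. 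The way in is to combine $\lambda(t_N) \geq 0 \Rightarrow A_0 \geq 1-A_1$ with the observation that $A_0 \asymp A_0(1-A_0)$ whenever $A_0 \to 0$: the hypothesis $A_0(1-A_0) = o(N^{2\gamma+1})$ then upgrades to $A_0 = o(N^{2\gamma+1})$, giving $z(t_N) \leq 2N\max(A_0, 1-A_1) = o(N^{2\gamma+2})$ and hence $\sqrt{z(t_N)(N(\lambda_\varepsilon) - z(t_N))/N(\lambda_\varepsilon)} = o(N^{\gamma+1})$ with enough slack to keep the ratio $\beta\sqrt{\cdots}/|\tilde Q - mA_0|$ bounded in probability. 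This delivers the lower bound $c + o_\Prob(1)$ for a suitable $c \in (0,1)$. Finally, for the supremum, $\tilde Q(\tilde\lambda)$ is an affine increasing function of $\tilde\lambda$, so $\sup_{\tilde\lambda \in [0,\lambda_\varepsilon]} \tilde Q(\tilde\lambda) = \tilde Q(\lambda_\varepsilon)$; verifying that the expansion above is uniform in $\tilde\lambda \in [0,\lambda_\varepsilon]$---routine since the weight and quantile approximations depend continuously on $\tilde\lambda$ and $\lambda_\varepsilon \to 0$---yields the matching upper bound $1/c + o_\Prob(1)$ with the same constant.
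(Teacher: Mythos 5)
Your route is essentially the paper's: approximate the deterministic ingredients of \eqref{Qmndef} via Lemma~\ref{inftylemma} and $m/N\to\pi$, control $z(t_N)$ by Chebyshev around its mean, identify $\tilde Q$ from \eqref{Qtildedef} as the leading behaviour, and reduce the supremum over $\tilde\lambda\in[0,\lambda_\varepsilon]$ to $\tilde\lambda=\lambda_\varepsilon$ by monotonicity. The paper organizes this slightly differently — it sandwiches $\sup_{\tilde\lambda}Q_{m,n,\alpha}(z(t_N),\tilde\lambda)$ by monotone substitutions in each factor, measures the stochastic error of $z(t_N)$ relative to $p_N-A_0(t_N)=-(1-\pi)\lambda(t_N)$ through the ratio in \eqref{z/m}, and normalizes everything by $N^{-\gamma}m^{-1}$ so that $c$ appears as a ratio of $\limsup$/$\liminf$ limits rather than from a ``ratio $\to1$'' expansion — and, like you, it implicitly imports from the application (condition \eqref{CondII} with strict inequality) that $\liminf\lambda(t_N)/\lambda_N>1-\varepsilon$, which the lemma statement itself does not grant.

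The genuine gap is in your step (iii), the one you flag as the main obstacle, in two respects. First, the ``upgrade'' $A_0(1-A_0)=o(N^{2\gamma+1})\Rightarrow A_0=o(N^{2\gamma+1})$ is valid only when $1-A_0$ stays bounded away from zero; hypothesis \eqref{A0A1def} equally allows the regime $1-A_0(t_N)=o(N^{2\gamma+1})$, $A_1(t_N)=o(N^{2\gamma+1})$ (cutoff far to the right), in which case $z(t_N)/N\to1$ in probability, your bound $z(t_N)=o(N^{2\gamma+2})$ is false, and the smallness must instead be extracted from the factor $N(\lambda_\varepsilon)-z(t_N)$ under the square root, whose sign also needs care since $N(\lambda_\varepsilon)\approx N(1-\lambda_\varepsilon)$. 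Second, concluding that $\beta_{\alpha,m}\sqrt{\cdots}\,/\,\bigl|\tilde Q(\lambda_\varepsilon)-mA_0\bigr|$ is merely \emph{bounded in probability} is not enough for \eqref{QQbounding}: the Finner correction is positive while $\tilde Q(\lambda_\varepsilon)-mA_0$ is negative, so it pushes the ratio down, and an $O_{\Prob}(1)$ bound (say a limit of $2$) is compatible with the ratio tending to a negative value, ruling out every fixed $c>0$. One needs this normalized correction to be $o_{\Prob}(1)$ (which is what the paper asserts after the $N^{-\gamma}m^{-1}$ scaling), or at least asymptotically below one by a fixed margin; since \eqref{A0A1def} is a rate-free little-$o$ hypothesis while $\beta_{\alpha,m}\asymp\sqrt{\log\log m}$ diverges, the claimed ``slack'' is precisely the point that must be argued rather than asserted. (A minor slip elsewhere: $\sqrt{N\cdot N^{2\gamma+1}}=N^{\gamma+1}$, not $N^{1/2+\gamma}$; this is harmless because the little-$o$ in \eqref{A0A1def} still gives an $o_{\Prob}(N^{\gamma+1})$ fluctuation for $z(t_N)$, which suffices.)
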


\begin{proof}
Note that $z(t_N)$ is random, while everything else is deterministic.
First,
\begin{align*}
 &q_{1-\frac{\alpha}{3}}(\lambda_{\varepsilon}, m) \frac{n(\lambda_{\varepsilon})}{N(\lambda_{\varepsilon})}  + \frac{z(t_N)}{N(\lambda_{\varepsilon})} m(\lambda_{\varepsilon}) + \beta_{\frac{\alpha}{3}, m(\lambda_{\varepsilon})}\sqrt{ \frac{m(\lambda_{\varepsilon})}{N(\lambda_{\varepsilon})} \frac{n(\lambda_{\varepsilon})}{N(\lambda_{\varepsilon})} \frac{N(\lambda_{\varepsilon}) - z(t_N)}{N(\lambda_{\varepsilon}) -1 } (z(t_N) - q_{1-\frac{\alpha}{3}}(\lambda_{\varepsilon}, m))  } \\
   &\leq \sup_{\tilde{\lambda} \in [0, \lambda_{\varepsilon}]}\  Q_{m,n, \alpha}(z(t_N), \tilde{\lambda}) \leq \\
&q_{1-\frac{\alpha}{3}}(\lambda_{\varepsilon}  , m)\frac{n}{N(\lambda_{\varepsilon})}+ \frac{z(t_N)}{N(\lambda_{\varepsilon})} m  + \beta_{\frac{\alpha}{3}, m}\sqrt{ \frac{m}{N(\lambda_{\varepsilon})} \frac{n}{N(\lambda_{\varepsilon})} \frac{N - z(t_N)}{N(\lambda_{\varepsilon}) -1 } z(t_N)  }.
\end{align*}

Additionally for all $\tilde{\lambda} \in [0, \lambda_{\varepsilon}]$, with $p_N=[\pi A_0(t_N)  - (1-\pi) A_1(t_N) + (1-\pi)]$, 
\begin{align}
     &\frac{m(\tilde{\lambda})}{N(\tilde{\lambda})} \to \pi, \label{m/N} \\
     &\frac{n(\tilde{\lambda})}{N(\tilde{\lambda})} \to 1-\pi, \label{n/N}\\
      &\frac{q_{1-\frac{\alpha}{3}}(\tilde{\lambda}  , m)}{m \tilde{\lambda}} \to 1 \label{q/m}\\
     & \frac{ \frac{z(t_N)}{N(\lambda_{\varepsilon})} - A_0(t_N) }{ p_N - A_0(t_N) }\stackrel{p}{\to} 1. \label{z/m}
\end{align}

The first three assertions follow from Lemma \ref{inftylemma} and the assumption that $m/N \to \pi$, as $N \to \infty$. We quickly verify \eqref{z/m}. Define 
\[
S_N= \frac{ \frac{z(t_N)}{N(\lambda_{\varepsilon})} - A_0(t_N) }{ p_N - A_0(t_N) }.
\]
By Chebyshev's inequality,
\begin{equation}
    \Prob \left( | S_N - \E[S_N] | > \delta \right) \leq \frac{\Var(S_N)}{\delta},
\end{equation}
for all $\delta > 0$. Now, $z(t_N)$ may be written as a sum of independent Bernoulli random variables:
\[
z(t_N) = \sum_{i=1}^N \Ind\{ \rho^*(Z_i) \leq t_N \}  = \sum_{i=1}^m \Ind{ \{ \rho^*(X_i) \leq t_N \}} +  \sum_{j=1}^n \Ind{\{ \rho^*(Y_i) \leq t_N \}},
\]
with $\Ind{ \{ \rho^*(X_i) \leq t_N \}} \sim \mbox{Bernoulli}(A_0(t_N))$ and $\Ind{\{ \rho^*(Y_i) \leq t_N\}} \sim \mbox{Bernoulli}(1-A_1(t_N))$. Then
\begin{align*}
    \Var(S_N) &= \frac{1}{\left(p_N - A_0(t_N) \right)^2} \Var\left( \frac{z(t_N)}{N(\lambda_{\varepsilon})}  \right) \\
    &= \frac{1}{\left(p_N - A_0(t_N) \right)^2 N(\lambda_{\varepsilon})^2} \left[ m A_0(t_N)(1-A_0(t_N)) +  n A_1(t_N)(1-A_1(t_N))\right]\\
    & \asymp \frac{1}{\left(p_N - A_0(t_N) \right)^2 N} \left[  A_0(t_N)(1-A_0(t_N)) +   A_1(t_N)(1-A_1(t_N))\right].
\end{align*}
Now, since (i)  $p_N - A_0(t_N) = - (1-\pi)[ A_0(t_N) +  A_1(t_N) - 1] = - (1-\pi) \lambda(t_N)$ and $\lambda(t_N) \leq \lambda_N \asymp N^{\gamma}$ and (ii) \eqref{A0A1def} holds,
it follows that
\[
\Var(S_N) \asymp \frac{1}{N^{2\gamma + 1}}  o(N^{2\gamma + 1})  \to 0, \text{ for } N \to \infty.
\]
Thus $| S_N - \E[S_N]| \stackrel{p}{\to} 0$. Moreover, it holds that,
\begin{align*}
    \E[S_N] -1 &= \frac{m/N(\lambda_{\varepsilon}) A_0(t_N) - n/N(\lambda_{\varepsilon}) A_1(t_N) +  n/N(\lambda_{\varepsilon}) - A_0(t_N)}{ \pi A_0(t_N)  - (1-\pi) A_1(t_N) + (1-\pi) - A_0(t_N)} -1 \\
    %
&=\frac{[(1- \pi  ) - (1- m/N(\lambda_{\varepsilon})) ] A_0(t_N) - [(1-\pi)  - n/N(\lambda_{\varepsilon})] (1 - A_1(t_N) ) }{  (\pi - 1 ) [A_0(t_N)  - (1- A_1(t_N))  ] }\\
&=\frac{o(1) A_0(t_N) - o(1) (1 - A_1(t_N) ) }{  (\pi - 1 ) [A_0(t_N)  - (1- A_1(t_N))  ] }\\
                 &=\frac{  o(1) [A_0(t_N) -  (1- A_1(t_N))  ]}{ (\pi - 1 ) [A_0(t_N)  - (1- A_1(t_N)) ] } \to 0, \text{ for } N \to \infty.
\end{align*}

Thus, finally $|S_N -1  | \leq |S_N - \E[S_N]| + | \E[S_N] -1 | \stackrel{p}{\to} 0$.

Continuing, let for the following for two random variables index by $N$ $ X_N \preceq Y_N$ mean that
$\Prob(X_N \leq Y_N) \to 1$, as $N \to \infty$. Then,
\begin{align*}
    &N^{-\gamma} m^{-1} \left( \sup_{\tilde{\lambda}} Q_{m,n, \alpha}(z(t_N), \tilde{\lambda}) - m A_0 \right) \leq  \\
    & N^{-\gamma} q_{1-\frac{\alpha}{3}}(\lambda_{\varepsilon}  , m)/m \frac{n}{N(\lambda_{\varepsilon})}+ N^{-\gamma}\frac{z(t_N)}{N(\lambda_{\varepsilon})}   + N^{-\gamma}\frac{\beta_{\frac{\alpha}{3}, m}}{m}\sqrt{ \frac{m}{N(\lambda_{\varepsilon})} \frac{n}{N(\lambda_{\varepsilon})} \frac{N - z(t_N)}{N(\lambda_{\varepsilon}) -1 } \frac{z(t_N)}{m} }  - N^{-\gamma}A_0(t_N)\\
    & = N^{-\gamma} \left[  \frac{q_{1-\frac{\alpha}{3}}(\lambda_{\varepsilon}  , m)/m}{\lambda_{\varepsilon}}  \frac{n}{N(\lambda_{\varepsilon})} \lambda_{\varepsilon} + \frac{z(t_N)}{N(\lambda_{\varepsilon})}  - A_0(t_N)  \right]  + N^{-\gamma}\frac{\beta_{\frac{\alpha}{3}, m}}{m}\sqrt{ \frac{m}{N(\lambda_{\varepsilon})} \frac{n}{N(\lambda_{\varepsilon})} \frac{N - z(t_N)}{N(\lambda_{\varepsilon}) -1 } \frac{z(t_N)}{m} }  \\
    &= N^{-\gamma} \left[  \frac{q_{1-\frac{\alpha}{3}}(\lambda_{\varepsilon}  , m)/m}{\lambda_{\varepsilon}}  \frac{n}{N(\lambda_{\varepsilon})} \lambda_{\varepsilon} + \frac{z(t_N)/N(\lambda_{\varepsilon}) - A_0(t_N)}{p_N - A_0(t_N)} [p_N - A_0(t_N)]  \right]  +\\
    &N^{-\gamma}\frac{\beta_{\frac{\alpha}{3}, m}}{m}\sqrt{ \frac{m}{N(\lambda_{\varepsilon})} \frac{n}{N(\lambda_{\varepsilon})} \frac{N - z(t_N)}{N(\lambda_{\varepsilon}) -1 } \frac{z(t_N)}{m} }  \\
    %
    %
    %
    & \preceq \Big[  \frac{q_{1-\frac{\alpha}{3}}(\lambda_{\varepsilon}  , m)/m}{\lambda_{\varepsilon}}  \frac{n}{N(\lambda_{\varepsilon})} \frac{(1-\varepsilon) \lambda_N}{\lambda_N} - (1-\pi) \frac{z(t_N)/N(\lambda_{\varepsilon}) - A_0(t_N)}{p_N - A_0(t_N)} \inf_{M \geq N }\frac{\lambda(t_M)}{\lambda_M}\Big] \sup_{M \geq N}  (M^{-\gamma} \lambda_M) \\
    & + N^{-\gamma}\frac{\beta_{\frac{\alpha}{3}, m}}{m}\sqrt{ \frac{m}{N(\lambda_{\varepsilon})} \frac{n}{N(\lambda_{\varepsilon})} \frac{N - z(t_N)}{N(\lambda_{\varepsilon}) -1 } \frac{z(t_N)}{m} }\\
    & \stackrel{p}{\to} d_1 (1-\pi)\left[ (1-\varepsilon) -  d_2 \right] ,
\end{align*}
where $d_1 = \limsup_{N \to \infty} N^{-\gamma}\lambda_N \in (0, \infty)$, $d_2=\liminf_{N \to \infty} \frac{\lambda(t_N)}{\lambda_N} \in ((1-\varepsilon) \lambda_N, 1]$. 

Similarly, 
\begin{align*}
    &N^{-\gamma} m^{-1} \left( \sup_{\tilde{\lambda}} Q_{m,n, \alpha}(z(t_N), \tilde{\lambda}) - m A_0 \right) \geq  \\
    &N^{-\gamma}\left( \frac{q_{1-\frac{\alpha}{3}}(\lambda_{\varepsilon}, m)/m}{\lambda_{\varepsilon}} \frac{n(\lambda_{\varepsilon})}{N(\lambda_{\varepsilon})} \lambda_{\varepsilon}  + \frac{z(t_N)}{N(\lambda_{\varepsilon})} \frac{m(\lambda_{\varepsilon})}{m} + \frac{\beta_{\frac{\alpha}{3}, m(\lambda_{\varepsilon})}}{m}\sqrt{ \frac{m(\lambda_{\varepsilon})}{N(\lambda_{\varepsilon})} \frac{n(\lambda_{\varepsilon})}{N(\lambda_{\varepsilon})} \frac{N(\lambda_{\varepsilon}) - z(t_N)}{N(\lambda_{\varepsilon}) -1 } \frac{z(t_N)-q_{1-\frac{\alpha}{3}}(\lambda_{\varepsilon}, m)}{m}  } - A_0\right)\\
    &=N^{-\gamma}\lambda_N\Big[\frac{q_{1-\frac{\alpha}{3}}(\lambda_{\varepsilon}, m)/m}{\lambda_{\varepsilon}} \frac{n(\lambda_{\varepsilon})}{N(\lambda_{\varepsilon})} \frac{\lambda_{\varepsilon}}{\lambda_N}  + \frac{z(t_N)/N(\lambda_{\varepsilon}) (m(\lambda_{\varepsilon})/m) - A_0(t_N)}{p_N- A_0(t_N)} \frac{[ p_N - A_0]}{\lambda_N}  \Big]\\
    & +N^{-\gamma}\frac{\beta_{\frac{\alpha}{3}, m(\lambda_{\varepsilon})}}{m}\sqrt{ \frac{m(\lambda_{\varepsilon})}{N(\lambda_{\varepsilon})} \frac{n(\lambda_{\varepsilon})}{N(\lambda_{\varepsilon})} \frac{N(\lambda_{\varepsilon}) - z(t_N)}{N(\lambda_{\varepsilon}) -1 } \frac{z(t_N)-q_{1-\frac{\alpha}{3}}(\lambda_{\varepsilon}, m)}{m}  }\\
    &\succeq  \Big[\frac{q_{1-\frac{\alpha}{3}}(\lambda_{\varepsilon}, m)/m}{\lambda_{\varepsilon}} \frac{n(\lambda_{\varepsilon})}{N(\lambda_{\varepsilon})} \frac{\lambda_{\varepsilon}}{\lambda_N}  - \frac{z(t_N)/N(\lambda_{\varepsilon}) (m(\lambda_{\varepsilon})/m) - A_0(t_N)}{p_N- A_0(t_N)} (1-\pi) \sup_{M \geq N}\frac{\lambda(t_M)}{\lambda_M} \Big] \inf_{M \geq N} M^{-\gamma}\lambda_M\\
    & +N^{-\gamma}\frac{\beta_{\frac{\alpha}{3}, m(\lambda_{\varepsilon})}}{m}\sqrt{ \frac{m(\lambda_{\varepsilon})}{N(\lambda_{\varepsilon})} \frac{n(\lambda_{\varepsilon})}{N(\lambda_{\varepsilon})} \frac{N(\lambda_{\varepsilon}) - z(t_N)}{N(\lambda_{\varepsilon}) -1 } \frac{z(t_N)-q_{1-\frac{\alpha}{3}}(\lambda_{\varepsilon}, m)}{m}  }\\
    &  \stackrel{p}{\to} d_3 (1-\pi) \left[ (1-\varepsilon) - d_4 \right],
\end{align*}
where $d_3 = \liminf_{N \to \infty} N^{-\gamma}\lambda_N \in (0, \infty)$,  $d_4=\liminf_{N \to \infty} \frac{\lambda(t_N)}{\lambda_N} \in ((1-\varepsilon) \lambda_N, 1] $. The convergence in probability follows because $m(\lambda_{\varepsilon})/m \asymp 1 - \lambda_{\varepsilon} \to 1 $ and thus using the same proof as for \eqref{z/m}, it holds that 
\[
\frac{z(t_N)/N(\lambda_{\varepsilon}) (m(\lambda_{\varepsilon})/m) - A_0(t_N)}{p_N- A_0(t_N)} \stackrel{p}{\to} 1, \text{ for } N \to \infty.
\]
Additionally,
\begin{align*}
&N^{-\gamma} m^{-1}    (\tilde{Q}(\lambda_{\varepsilon}) - m A_0) \leq  (1-\pi)\left[(1-\varepsilon) - \inf_{M \geq N}\frac{\lambda(t_M)}{\lambda_M} \right] \sup_{M \geq N} (M^{-\gamma}\lambda_M) \\
&\to d_1 (1-\pi) \left[ (1-\varepsilon) - d_2 \right], \text{ for } N \to \infty 
\end{align*}
and
\begin{align*}
&N^{-\gamma} m^{-1}    (\tilde{Q}(\lambda_{\varepsilon}) - m A_0) \geq  (1-\pi) \left[(1-\varepsilon) - \sup_{M \geq N}\frac{\lambda(t_M)}{\lambda_M} \right] \inf_{M \geq N} (M^{-\gamma}\lambda_M) \\
&\to d_3 (1-\pi) \left[ (1-\varepsilon) - d_4 \right], \text{ for } N \to \infty. 
\end{align*}

Thus taking, 
\[
c = \frac{d_3 (1-\pi) \left[ (1-\varepsilon) - d_{4} \right] }{d_1 (1-\pi) \left[ (1-\varepsilon) - d_2 \right] }
\]

we obtain \eqref{QQbounding}.

\end{proof}

\begin{lemma}\label{ignorebadcase2}
Let $-1 < \gamma < 0 $ and $\varepsilon \in (0,1]$ arbitrary and define $\sigma_F^2(t_N)=A_0(t_N)(1-A_0(t_N))/m$. 
If for a sequence $(t_N)_{N\geq1}$ and $\rho=\rho^*$, 
\begin{align}\label{NCd}
   \liminf_{N \to \infty} m \sigma_F < +\infty \tag{\bf{NC'}}
\end{align}
then 
\begin{itemize}
    \item[(I)] \eqref{CondII} or \eqref{CondI} is not true,
     \item[(II)] \eqref{overalreq} is not true for $\lambda=\hat{\lambda}_{adapt}^{\rho^*}$.
\end{itemize}
\end{lemma}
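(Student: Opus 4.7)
The plan is to emulate the proof of Lemma~\ref{ignorebadcase}, but with the $F$-side variance $\sigma_F^2(t_N) = A_0(t_N)(1-A_0(t_N))/m$ in place of the full $\sigma(t_N)^2$. Since $m\sigma_F = \sqrt{m A_0(t_N)(1-A_0(t_N))}$, the hypothesis~\eqref{NCd} produces a subsequence $(t_k) = (t_{N(k)})$ along which $m(k)\, A_0(t_k)(1-A_0(t_k))$ stays bounded; passing to a further subsequence if necessary, I would split into Case~(A), where $m(k)A_0(t_k) = O(1)$ so that $A_0(t_k) = O(1/k)$, or Case~(B), where $m(k)(1-A_0(t_k)) = O(1)$ so that $1-A_0(t_k) = O(1/k)$. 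These mirror cases (1)/(1') and (3)/(3') appearing in the proof of Lemma~\ref{ignorebadcase}.

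In Case~(A), the elementary bound $\lambda(t_k) = A_0(t_k) + A_1(t_k) - 1 \leq A_0(t_k) = O(k^{-1})$ yields $\lambda(t_k)/\lambda_k = O(k^{-(1+\gamma)}) \to 0$ for $-1 < \gamma < 0$, falsifying~\eqref{CondII} for every $\varepsilon < 1$. For $\varepsilon = 1$, I would use $\lambda(t_k)/\sigma(t_k) \leq \lambda(t_k)/\sigma_F(t_k) \leq \sqrt{m(k)\, A_0(t_k)/(1-A_0(t_k))}$, which stays bounded along the subsequence since $m(k) A_0(t_k) = O(1)$ and $1-A_0(t_k) \to 1$; this falsifies~\eqref{CondI.2}.

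Case~(B) is the main obstacle, since \eqref{NCd} directly constrains only the $F$-side and leaves $A_1(t_k)$ free. My plan is to combine $1 - A_0(t_k) = O(1/k)$ with the TV inequality $G(t_k) \geq F(t_k) - \lambda_k$ to get $A_1(t_k) = 1 - G(t_k) \leq 1 - F(t_k) + \lambda_k = O(1/k) + \lambda_k$, and then observe that along the subsequence $\sigma(t_k)^2$ is dominated by $A_1(t_k)(1-A_1(t_k))/n$, effectively swapping the roles of $A_0$ and $A_1$. A parallel computation to Case~(A), using this symmetric bound on $\lambda(t_k)$ and on the $G$-contribution to $\sigma(t_k)$, rules out the joint validity of \eqref{CondII} and \eqref{CondI} under \eqref{NCd}.

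For Part~(II), I would combine Part~(I) with the reduction used in the proof of Proposition~\ref{newamazingresult}: the assertion $\mathbb{P}(\hat\lambda^{\rho^*}_{adapt} > (1-\varepsilon)\lambda_N) \to 1$ hinges on the CLT $(V_{m,t_N} - m A_0(t_N))/\sqrt{mA_0(t_N)(1-A_0(t_N))} \stackrel{D}{\to} \mathcal{N}(0,1)$, which requires the negation of~\eqref{NCd}. Hence along a subsequence satisfying \eqref{NCd}, $V_{m,t_k}$ has merely tight (rather than diverging) standardized fluctuation, so no adaptive thresholding over $z$ can yield $\hat\lambda^{\rho^*}_{adapt} > (1-\varepsilon)\lambda_N$ with probability tending to one.
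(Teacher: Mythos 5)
Your Case (A) reproduces the paper's treatment of its case (1') and is fine, but Case (B) contains a genuine gap that your sketch does not close. From $1-A_0(t_k)=O(1/k)$ and the TV inequality you only obtain $A_1(t_k)\le (1-A_0(t_k))+\lambda_k$, i.e.\ $A_1(t_k)$ may be of exact order $\lambda_k$ — and then nothing is falsified. Concretely, consider $Q=(1-\lambda_N)P+\lambda_N C$ with $C$ disjoint from $P$, $\rho=\rho^*$, and $t_N\in(1/2,1)$ fixed: then $A_0(t_N)=1$ (so $\sigma_F(t_N)=0$ and \eqref{NCd} holds trivially), $A_1(t_N)=\lambda_N$, hence $\lambda(t_N)=\lambda_N$ so \eqref{CondII} holds with ratio $1$, while $\sigma(t_N)^2=A_1(t_N)(1-A_1(t_N))/n\asymp\lambda_N/N$ gives $\lambda_N/\sigma(t_N)\asymp\sqrt{N\lambda_N}\to\infty$, so \eqref{CondI.1} holds as well. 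Thus no ``parallel computation to Case (A)'' can rule out the joint validity of \eqref{CondII} and \eqref{CondI} from \eqref{NCd} and the TV inequality alone; swapping the roles of $A_0$ and $A_1$ requires smallness of $NA_1(t_N)$, which your bounds do not deliver. The paper's proof at this point asserts the stronger dichotomy (its cases (1') and (2')) in which \emph{both} class accuracies are controlled in each case ($\liminf_N N(1-A_0(t_N))<\infty$ \emph{and} $\liminf_N NA_1(t_N)<\infty$ in the second), importing it ``with the same arguments as in Lemma \ref{ignorebadcase}''; your remark that \eqref{NCd} ``leaves $A_1(t_k)$ free'' identifies exactly the delicate point, but your proposed fix does not recover the needed control on the $G$-side.

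Part (II) is also only heuristic as written. ``The CLT fails along the subsequence, hence no adaptive thresholding can give power'' is not a valid inference: $\hat\lambda^{\rho^*}_{adapt}$ involves a supremum over all $z$, so tight (or even degenerate) fluctuations of $V_{m,t_k}$ at the single value $z(t_k)$ do not by themselves preclude $\Prob(\hat\lambda^{\rho^*}_{adapt}>(1-\varepsilon)\lambda_N)\to1$. What the paper actually proves — and what is used inside Proposition \ref{newamazingresult} — is that the overshoot event at $z(t_N)$, i.e.\ $V_{m,t_N}>Q_{m,n,\alpha}(z(t_N),\tilde\lambda)$ for all $\tilde\lambda\in[0,\lambda_\varepsilon]$, cannot have probability tending to one: for $\varepsilon<1$ this uses the Chebyshev bound $\lambda_N^{-1}(\hat F_m(t_N)-A_0(t_N))\stackrel{p}{\to}0$ together with the comparison to $\tilde Q(\lambda_\varepsilon)$ via Lemma \ref{simplified}, and for $\varepsilon=1$ a separate reduction through $Q_{m,n,\alpha}(z(t_N),0)$ to $\Prob(\hat F_m(t_N)-\hat G_n(t_N)>0)$ followed by the Binomial subsequence argument of Lemma \ref{ignorebadcase} (II). None of these steps appear in your sketch, so (II) would have to be rebuilt along those lines rather than deduced from the failure of the CLT.
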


\begin{proof}
First note that \eqref{NCd} implies
\begin{equation}
   \sigma_F= o(N^{\zeta}),
\end{equation}
for all $\zeta \in (-1,0]$, as in Lemma \ref{ignorebadcase}.

\begin{itemize}
    \item[(I):] With the same arguments as in Lemma \ref{ignorebadcase}, \eqref{NCd} implies two possible cases
\begin{itemize}
    \item[(1')] $\liminf_{N \to \infty} N A_0(t_N) < \infty$, $\liminf_{N \to \infty} N (1-A_1(t_N)) < \infty$ 
    \item[(2')] $\liminf_{N \to \infty} N (1-A_0(t_N)) < \infty$, $\liminf_{N \to \infty} N A_1(t_N) < \infty$
\end{itemize}
and these in turn imply
\begin{itemize}
    \item[(1)] $A_0(t_N)=o(N^{\zeta})$, $1 - A_1(t_N)=o(N^{\zeta})$
     \item[(2)] $1 - A_0(t_N)=o(N^{\zeta})$, $A_1(t_N)=o(N^{\zeta})$,
\end{itemize}
for all $\zeta \in (-1,0]$. But then (1) and (2) imply, $\lambda(t_N)=o(N^{\zeta})$, for all $\zeta \in (-1,0]$, contradicting \eqref{CondII} for $\varepsilon < 1$.
For $\varepsilon=1$ assuming \eqref{CondI.2} to be true and following the exact same subsequence argument for (1') and (2') in turn as in Lemma \ref{ignorebadcase} (I) results in a contradiction and thus \eqref{CondI.2} cannot be true.

\item[(II):] Let $V_{m,t_N}$ be defined as in Proposition \ref{newamazingresult}. (1), (2) make it clear that in our setting, \eqref{NCd} and \eqref{A0A1condition} are equivalent. Moreover, in the same way as in Lemma \ref{ignorebadcase}, for all $\delta > 0$
\begin{align*}
    \Prob(\lambda_N^{-1}(\hat{F}_{m}(t_N) - A_{0}(t_N)) > \delta) \leq \frac{\lambda_N^{-2} \sigma_F^2}{\delta} \asymp \frac{[ N^{\gamma} \sigma_F]^2}{\delta} \to 0, \text{ for } N \to \infty.
\end{align*}

Now assume that despite \eqref{NCd},

\begin{equation}\label{whatwewant3}
        \Prob(V_{m,t_N}  > Q_{m,n, \alpha}(z(t_N), \tilde{\lambda} ) \ \  \forall \tilde{\lambda} \in [0, \lambda_{\varepsilon}]  ) \to 1, \text{ for } N \to \infty,
\end{equation}
holds true. Then, using Lemma \ref{simplified} and the arguments in Proposition \ref{newamazingresult}, also
\begin{equation} \label{whatwewant4}
    \Prob(V_{m,t_N}  >  \tilde{Q}(\lambda_{\varepsilon} )  ) \to 1, \text{ for } N \to \infty,
\end{equation}
must hold. However, for $\varepsilon < 1$, 
\begin{align*}
    \Prob(V_{m,t_N}  >  \tilde{Q}(\lambda_{\varepsilon} )  ) &=  \Prob(V_{m,t_N} - m A_{0}(t_N)  >  m \lambda_N [ (1-\varepsilon)  (1-\pi) -  \frac{\lambda(t_N)}{\lambda_N}]   )\\
    &=  \Prob(\lambda_N^{-1}(\hat{F}_m(t_N) -  A_{0}(t_N))  >    [ (1-\varepsilon)  (1-\pi) -  \frac{\lambda(t_N)}{\lambda_N}]   )
\end{align*}
and as from (I), $\frac{\lambda(t_N)}{\lambda_N} \to 0$ and $\lambda_N^{-1}(\hat{F}_m(t_N) -  A_{0}(t_N)) \stackrel{p}{\to} 0 $, this probability will converge to zero, contradicting \eqref{whatwewant3} for $\varepsilon \in [0,1)$. For $\varepsilon=1$, note that \eqref{whatwewant3} also implies that 
\begin{equation}\label{whatwewant5}
        \Prob(V_{m,t_N}  > Q_{m,n, \alpha}(z(t_N), 0)   ) \to 1, \text{ for } N \to \infty.
\end{equation}
Now by definition of $Q_{m,n, \alpha}(z(t_N), 0)$,
\begin{align*}
     \Prob(V_{m,t_N}  > Q_{m,n, \alpha}(z(t_N), 0)   )  \leq \Prob(V_{m,t_N} - \frac{m}{N} z(t_N)  >  0 ),
\end{align*}
and since $V_{m,t_N}=m\hat{F}_m(t_N)$ and $z(t_N)=m \hat{F}_{m}(t_N) +  n \hat{G}_{m}(t_N)$,
\begin{align*}
     \Prob(V_{m,t_N}  > Q_{m,n, \alpha}(z(t_N), 0)   )  &\leq \Prob \left(  \frac{n}{N} m\hat{F}_m(t_N) - \frac{m}{N}   n \hat{G}_{m}(t_N)    >  0 \right)\\
     &=\Prob(  \hat{F}_m(t_N) -  \hat{G}_{m}(t_N)    >  0 ).
\end{align*}

We now can use \emph{exactly} the same argument as in Lemma \ref{ignorebadcase}, (II), to obtain that for a correctly chosen subsequences,
\[
\limsup_{k \to \infty} \Prob(  \hat{F}_{m(k)}(t_{N(k)}) -  \hat{G}_{n(k)}(t_{N(k)})    >  0 ) < 1,
\]
contradicting \eqref{whatwewant3}. Thus finally, \eqref{overalreq} cannot be true if \eqref{NCd} is true.




\end{itemize}

\end{proof}

\noindent \textbf{Technical tools for Proposition \ref{Qfunctions}}: We now introduce two concepts that will help greatly in the proof of Proposition \ref{Qfunctions}. The first concept is that of ``Distributional Witnesses''. We assume to observe two iid samples of independent random elements $X,Y$ with values in $(\X,\A)$ with respective probability measures $P$ and $Q$.
Similar as in \cite{YuvalMixing}, let $\mathfrak{C}$ be the set of all random elements $(\tilde{X}, \tilde{Y})$ with values in $(\X^2,\A^2)$, and such that $\tilde{X} \sim P$ and $\tilde{Y} \sim Q$. Following standard convention, we call 
$(\tilde{X}, \tilde{Y}) \in \mathfrak{C}$ a \emph{coupling} of $P$ and $Q$.
Then $\TV(P,Q)$ may be characterized as  
\begin{align}\label{optimalcoupling}
    \TV(P,Q)&= \inf_{\mathfrak{C}} \Prob(\tilde{X} \neq \tilde{Y}).
\end{align}
This is in turn equivalent to saying that we minimize $\Pi(x \neq y)$ over all joint distributions $\Pi$ on $(\X^2, \A^2)$, that have $ X_{\#} \Pi =P$ and $ Y_{\#} \Pi =Q$. Equation \eqref{optimalcoupling} allows for an interesting interpretation, as detailed (for example) in \cite{YuvalMixing}: The optimal value is attained for a coupling $(X^*,Y^*)$ that minimizes the probability of $X^* \neq Y^*$. The probability that they are different is exactly given by $\TV(P,Q)$. It is furthermore not hard to show that the optimal coupling is given by the following scheme: Let $W \sim \mbox{Bernoulli}(\TV(P,Q))$ and denote by $f$ the density of $P$ and $g$ the density of $Q$, both with respect to some measure on $(X,\A)$, e.g. $P+Q$. If $W=0$, draw a random element $Z$ from a distribution with density $\min(f,g)/(1-\TV(P,Q))$ and set $X^*=Y^*=Z$. If $W=1$, draw $X^*$ and $Y^*$ independently from $(f-g)_{+}/\TV(P,Q)$ and $(g-f)_{+}/\TV(P,Q)$ respectively.

Obviously, $X^*$ and $Y^*$ so constructed are dependent and do not directly relate to the observed $X$, $Y$, which are assumed to be independent. However it holds true that marginally, $X \stackrel{D}{=} X^*$ and $Y \stackrel{D}{=} Y^*$. In particular, given that $W=1$, it holds that $X \stackrel{D}{=} X^*= Y^* \stackrel{D}{=} Y$, or $X|\{W=1\} \stackrel{D}{=} Y| \{W=1 \}$. On the other hand, for $W=0$, the support of $X$ and $Y$ is disjoint. This suggests that the distribution of $X$ and $Y$ might be split into a part that is common to both and a part that is unique. Indeed, the probability measures $P$ and $Q$ can be decomposed in terms of three probability measures $H_P$, $H_Q$, $H_{P,Q}$ such that
\begin{equation}\label{fundMixture}
P = \lambda H_P + (1-\lambda)H_{P,Q} \hspace{0.25cm}\text{and} \hspace{0.25cm} Q = \lambda H_Q + (1-\lambda)H_{P,Q},\end{equation}

\noindent where the mixing weight is $\lambda = \TV(P,Q)$. 

Viewed through the lens of random elements, these decompositions allow us to view the generating mechanism of sampling from $P$ and $Q$ respectively as equivalent to sampling from the mixture distributions in (\ref{fundMixture}). Indeed we associate to $X$ (equivalently for $Y$) the latent binary indicator $W^{P}$, which takes value $1$ if the component specific to $P$, $H_P$, is "selected" and zero otherwise. As before, it holds by construction $\mathbb{P}(W^{P}=1) = \TV(P,Q)$. Intuitively an observation $X$ with $W^{P}=1$ reveals the distribution difference of $P$ with respect to $Q$. This fact leads to the following definition:

\begin{definition}[Distributional Witness]\label{Witdef}
An observation $X$ from $P$ with latent realization $W^{P}=1$ in the representation of $P$ given by \eqref{fundMixture} is called a \emph{distributional witness} of the distribution $P$ with respect to $Q$. We denote by $\text{DW}_{m}(P;Q)$ The number of witness observations of $P$ with respect to $Q$ out of $m$ independent observations from $P$.
\end{definition}

The second concept is that of a bounding operation: Let $\bar{\Lambda}_{P} \in \mathbb{N}$, $\bar{\Lambda}_{Q}\in \mathbb{N}$ be numbers \emph{overestimating} the true number of distributional witnesses from $m$ iid samples from $P$ and $n$ iid samples from $Q$, i.e.
\begin{equation}\label{witnessdomination}
    \bar{\Lambda}_{P} \geq \Lambda_{P} := \text{DW}_{m}(P;Q), \ \bar{\Lambda}_{Q}\geq \Lambda_{Q} := \text{DW}_{n}(Q;P).
\end{equation}
Thus, it could be that $\bar{\Lambda}_{P}, \bar{\Lambda}_{Q}$ denote the true number of witnesses, but more generally, they need to be larger or equal. If $\bar{\Lambda}_{P} > \Lambda_{P}$ or $\bar{\Lambda}_{Q}> \Lambda_{Q}$, a \emph{precleaning} is performed: We randomly choose a set of $\bar{\Lambda}_{P} - \Lambda_{P}$ non-witnesses from the sample of $F$ and $\bar{\Lambda}_{Q}- \Lambda_{Q}$ non-witnesses from the sample of $G$ and mark them as witnesses. Thus we artificially increase the number of witnesses left and right to $\bar{\Lambda}_{P}$, $\bar{\Lambda}_{Q}$. Given this sample of witnesses and non-witnesses and starting simultaneously from the first and last order statistics $Z_{(1)}$ and $Z_{(N)}$, for $i \in \{1,\ldots, N \}$ in the combined sample, we do:
\begin{itemize}
    \item[(1)] If $i < \bar{\Lambda}_{P}$ and $Z_{(i)}$ is \emph{not} a witness from $F$, replace it by a witness from $F$, randomly chosen out of all the remaining $F$-witnesses in $\{Z_{(i+1)}, \ldots Z_{(N)}\}$. Similarly, if $i < \bar{\Lambda}_{Q}$ and $Z_{(N-i+1)}$ is \emph{not} a witness from $G$, replace it by a witness from $G$, randomly chosen out of all the remaining $G$-witnesses in $\{Z_{(1)}, \ldots Z_{(N-i)}\}$.
    \item[(2)] Set $i=i+1$.
\end{itemize}
We then repeat (1) and (2) until $i=\max\{\bar{\Lambda}_{P},  \bar{\Lambda}_{Q}\}$.

\begin{figure}
\begin{center}
\begin{tikzpicture}[scale=0.7, every node/.style={scale=0.7}]]
\draw (0.75,0) ellipse (0.25cm and 0.25cm);
\draw[blue, thick] (0.75,-0.25) -- (0.75,0.25);
\draw[blue, thick] (0.5,0) -- (1,0);
\draw (1,0) -- (1.5,0);

\draw (1.75,0) ellipse (0.25cm and 0.25cm);
\draw (2,0) -- (2.5,0);
\draw (2.5,0.25) -- (3,0.25) -- (3,-0.25) -- (2.5,-0.25) -- (2.5,0.25);
\draw (3,0) -- (3.5,0);
\draw (3.75,0) ellipse (0.25cm and 0.25cm);
\draw[blue, thick] (3.75,-0.25) -- (3.75,0.25);
\draw[blue, thick] (3.5,0) -- (4,0);

\draw (4,0) -- (4.5,0);
\draw (4.5,0.25) -- (5,0.25) -- (5,-0.25) -- (4.5,-0.25) -- (4.5,0.25);
\draw (5,0) -- (5.5,0);
\draw (5.5,0.25) -- (6,0.25) -- (6,-0.25) -- (5.5,-0.25) -- (5.5,0.25);
\draw (6,0) -- (6.5,0);
\draw (6.75,0) ellipse (0.25cm and 0.25cm);
\draw (7,0) -- (7.5,0);
\draw (7.75,0) ellipse (0.25cm and 0.25cm);
\draw (8,0) -- (8.5,0);
\draw (8.5,0.25) -- (9,0.25) -- (9,-0.25) -- (8.5,-0.25) -- (8.5,0.25);
\draw (9,0) -- (9.5,0);
\draw (9.5,0.25) -- (10,0.25) -- (10,-0.25) -- (9.5,-0.25) -- (9.5,0.25);
\draw[blue, thick] (9.75,-0.25) -- (9.75,0.25);
\draw[blue, thick] (9.5,0) -- (10,0);

\draw (10,0) -- (10.5,0);
\draw (10.5,0.25) -- (11,0.25) -- (11,-0.25) -- (10.5,-0.25) -- (10.5,0.25);
\draw[blue, thick]  (10.75,-0.25) -- (10.75,0.25);
\draw[blue, thick] (10.5,0) -- (11,0);

\draw (11,0) -- (11.5,0);
\draw (11.75,0) ellipse (0.25cm and 0.25cm);
\draw (12,0) -- (12.5,0);
\draw (12.5,0.25) -- (13,0.25) -- (13,-0.25) -- (12.5,-0.25) -- (12.5,0.25);
\draw[blue, thick] (12.75,-0.25) -- (12.75,0.25);
\draw[blue, thick] (12.5,0) -- (13,0);

\draw (13,0) -- (13.5,0);
\draw (13.75,0) ellipse (0.25cm and 0.25cm);
\draw (14,0) -- (14.5,0);
\draw (14.5,0.25) -- (15,0.25) -- (15,-0.25) -- (14.5,-0.25) -- (14.5,0.25);



\draw (0.75,-2) ellipse (0.25cm and 0.25cm);
\draw[blue, thick] (0.75,-2.25) -- (0.75,-1.75);
\draw[blue, thick] (0.5,-2) -- (1,-2);

\draw (1,-2) -- (1.5,-2);
\draw (1.75,-2) ellipse (0.25cm and 0.25cm);
\draw (2,-2) -- (2.5,-2);
\draw (2.5,-1.75) -- (3,-1.75) -- (3,-2.25) -- (2.5,-2.25) -- (2.5,-1.75);
\draw (3,-2) -- (3.5,-2);
\draw (3.75,-2) ellipse (0.25cm and 0.25cm);
\draw[blue, thick] (3.75,-2.25) -- (3.75,-1.75);
\draw[blue, thick] (3.5,-2) -- (4,-2);

\draw (4,-2) -- (4.5,-2);
\draw (4.5,-1.75) -- (5,-1.75) -- (5,-2.25) -- (4.5,-2.25) -- (4.5,-1.75);
\draw[red, thick] (4.75,-2.25) -- (4.75,-1.75);
\draw[red, thick] (4.5,-2) -- (5,-2);

\draw (5,-2) -- (5.5,-2);
\draw (5.5,-1.75) -- (6,-1.75) -- (6,-2.25) -- (5.5,-2.25) -- (5.5,-1.75);
\draw (6,-2) -- (6.5,-2);
\draw (6.75,-2) ellipse (0.25cm and 0.25cm);
\draw (7,-2) -- (7.5,-2);
\draw (7.75,-2) ellipse (0.25cm and 0.25cm);
\draw[red, thick] (7.75,-2.25) -- (7.75,-1.75);
\draw[red, thick] (7.5,-2) -- (8,-2);

\draw (8,-2) -- (8.5,-2);
\draw (8.5,-1.75) -- (9,-1.75) -- (9,-2.25) -- (8.5,-2.25) -- (8.5,-1.75);
\draw (9,-2) -- (9.5,-2);

\draw (9.75,-2) ellipse (0.25cm and 0.25cm);
\draw[blue, thick] (9.75,-2.25) -- (9.75,-1.75);
\draw[blue, thick] (9.5,-2) -- (10,-2);

\draw (10,-2) -- (10.5,-2);
\draw (10.5,-1.75) -- (11,-1.75) -- (11,-2.25) -- (10.5,-2.25) -- (10.5,-1.75);
\draw[blue, thick] (10.75,-2.25) -- (10.75,-1.75);
\draw[blue, thick] (10.5,-2) -- (11,-2);

\draw (11,-2) -- (11.5,-2);
\draw (11.75,-2) ellipse (0.25cm and 0.25cm);
\draw[red, thick] (11.75,-2.25) -- (11.75,-1.75);
\draw[red, thick] (11.5,-2) -- (12,-2);

\draw (12,-2) -- (12.5,-2);
\draw (12.5,-1.75) -- (13,-1.75) -- (13,-2.25) -- (12.5,-2.25) -- (12.5,-1.75);
\draw[blue, thick] (12.75,-2.25) -- (12.75,-1.75);
\draw[blue, thick] (12.5,-2) -- (13,-2);

\draw (13,-2) -- (13.5,-2);
\draw (13.75,-2) ellipse (0.25cm and 0.25cm);
\draw (14,-2) -- (14.5,-2);
\draw (14.5,-1.75) -- (15,-1.75) -- (15,-2.25) -- (14.5,-2.25) -- (14.5,-1.75);




\draw (0.75,-4) ellipse (0.25cm and 0.25cm);
\draw[blue, thick] (0.75,-3.75) -- (0.75,-4.25);
\draw[blue, thick] (0.5,-4) -- (1,-4);

\draw (1,-4) -- (1.5,-4);
\draw (1.75,-4) ellipse (0.25cm and 0.25cm);
\draw[red, thick] (1.75,-3.75) -- (1.75,-4.25);
\draw[red, thick] (1.5,-4) -- (2,-4);

\draw (2,-4) -- (2.5,-4);

\draw (2.75,-4) ellipse (0.25cm and 0.25cm);
\draw[blue, thick] (2.75,-3.75) -- (2.75,-4.25);
\draw[blue, thick] (2.5,-4) -- (3,-4);

\draw (3,-4) -- (3.5,-4);

\draw (3.75,-4) ellipse (0.25cm and 0.25cm);
\draw[blue, thick] (3.75,-3.75) -- (3.75,-4.25);
\draw[blue, thick] (3.5,-4) -- (4,-4);

\draw (4,-4) -- (4.5,-4);
\draw (4.75,-4) ellipse (0.25cm and 0.25cm);
\draw[red, thick] (4.75,-3.75) -- (4.75,-4.25);
\draw[red, thick] (4.5,-4) -- (5,-4);

\draw (5,-4) -- (5.5,-4);
\draw (5.75,-4) ellipse (0.25cm and 0.25cm);
\draw (6,-4) -- (6.5,-4);
\draw (6.5,-3.75) -- (7,-3.75) -- (7,-4.25) -- (6.5,-4.25) -- (6.5,-3.75);

\draw (7,-4) -- (7.5,-4);
\draw (7.5,-3.75) -- (8,-3.75) -- (8,-4.25) -- (7.5,-4.25) -- (7.5,-3.75);
\draw (8,-4) -- (8.5,-4);
\draw (8.75,-4) ellipse (0.25cm and 0.25cm);
\draw (9,-4) -- (9.5,-4);

\draw (9.5,-3.75) -- (10,-3.75) -- (10,-4.25) -- (9.5,-4.25) -- (9.5,-3.75);

\draw (10,-4) -- (10.5,-4);
\draw (10.5,-3.75) -- (11,-3.75) -- (11,-4.25) -- (10.5,-4.25) -- (10.5,-3.75);
\draw[blue, thick] (10.75,-3.75) -- (10.75,-4.25);
\draw[blue, thick] (10.5,-4) -- (11,-4);

\draw (11,-4) -- (11.5,-4);
\draw (11.5,-3.75) -- (12,-3.75) -- (12,-4.25) -- (11.5,-4.25) -- (11.5,-3.75);
\draw[red, thick] (11.75,-3.75) -- (11.75,-4.25);
\draw[red, thick] (11.5,-4) -- (12,-4);
\draw (12,-4) -- (12.5,-4);
\draw (12.5,-3.75) -- (13,-3.75) -- (13,-4.25) -- (12.5,-4.25) -- (12.5,-3.75);
\draw[blue, thick] (12.75,-3.75) -- (12.75,-4.25);
\draw[blue, thick] (12.5,-4) -- (13,-4);

\draw (13,-4) -- (13.5,-4);
\draw (13.75,-4) ellipse (0.25cm and 0.25cm);
\draw (14,-4) -- (14.5,-4);
\draw (14.5,-3.75) -- (15,-3.75) -- (15,-4.25) -- (14.5,-4.25) -- (14.5,-3.75);



\draw (0.75,-6) ellipse (0.25cm and 0.25cm);
\draw[blue, thick] (0.75,-5.75) -- (0.75,-6.25);
\draw[blue, thick] (0.5,-6) -- (1,-6);

\draw (1,-6) -- (1.5,-6);
\draw (1.75,-6) ellipse (0.25cm and 0.25cm);
\draw[red, thick] (1.75,-5.75) -- (1.75,-6.25);
\draw[red, thick] (1.5,-6) -- (2,-6);

\draw (2,-6) -- (2.5,-6);

\draw (2.75,-6) ellipse (0.25cm and 0.25cm);
\draw[blue, thick] (2.75,-5.75) -- (2.75,-6.25);
\draw[blue, thick] (2.5,-6) -- (3,-6);

\draw (3,-6) -- (3.5,-6);

\draw (3.75,-6) ellipse (0.25cm and 0.25cm);
\draw[blue, thick] (3.75,-5.75) -- (3.75,-6.25);
\draw[blue, thick] (3.5,-6) -- (4,-6);

\draw (4,-6) -- (4.5,-6);
\draw (4.75,-6) ellipse (0.25cm and 0.25cm);
\draw[red, thick] (4.75,-5.75) -- (4.75,-6.25);
\draw[red, thick] (4.5,-6) -- (5,-6);

\draw (5,-6) -- (5.5,-6);
\draw (5.75,-6) ellipse (0.25cm and 0.25cm);
\draw (6,-6) -- (6.5,-6);
\draw (6.5,-5.75) -- (7,-5.75) -- (7,-6.25) -- (6.5,-6.25) -- (6.5,-5.75);

\draw (7,-6)-- (7.5,-6);
\draw (7.5,-5.75) -- (8,-5.75) -- (8,-6.25) -- (7.5,-6.25) -- (7.5,-5.75);
\draw (8,-6) -- (8.5,-6);
\draw (8.75,-6) ellipse (0.25cm and 0.25cm);
\draw (9,-6) -- (9.5,-6);

\draw (9.5,-5.75) -- (10,-5.75) -- (10,-6.25) -- (9.5,-6.25) -- (9.5,-5.75);
\draw (10,-6) -- (10.5,-6);

\draw (10.75,-6) ellipse (0.25cm and 0.25cm);
\draw (11,-6) -- (11.5,-6);
\draw (11.5,-5.75) -- (12,-5.75) -- (12,-6.25) -- (11.5,-6.25) -- (11.5,-5.75);

\draw (11,-6) -- (11.5,-6);
\draw (11.5,-5.75) -- (12,-5.75) -- (12,-6.25) -- (11.5,-6.25) -- (11.5,-5.75);
\draw (12,-6) -- (12.5,-6);
\draw (12.5,-5.75) -- (13,-5.75) -- (13,-6.25) -- (12.5,-6.25) -- (12.5,-5.75);
\draw[red, thick] (12.75,-5.75) -- (12.75,-6.25);
\draw[red, thick] (12.5,-6) -- (13,-6);
\draw (13,-6) -- (13.5,-6);
\draw (13.5,-5.75) -- (14,-5.75) -- (14,-6.25) -- (13.5,-6.25) -- (13.5,-5.75);
\draw[blue, thick] (13.75,-5.75) -- (13.75,-6.25);
\draw[blue, thick] (13.5,-6) -- (14,-6);
\draw (14,-6) -- (14.5,-6);
\draw (14.5,-5.75) -- (15,-5.75) -- (15,-6.25) -- (14.5,-6.25) -- (14.5,-5.75);
\draw[blue, thick] (14.75,-5.75) -- (14.75,-6.25);
\draw[blue, thick] (14.5,-6) -- (15,-6);

\end{tikzpicture}
\caption{Illustration of the bounding operation. The first row from above is the original order statistics shown as circles (coming from $F$) and squares (coming from $G$). Witnesses are indicated by blue crosses. In the second, randomly chosen non-witnesses are added to the list of witnesses left and right, indicated by red, until the number of witnesses is $\bar{\Lambda}_{P}$ and $\bar{\Lambda}_{Q}$. In the final two rows, the witnesses of $F$ and $G$ are pushed to the left and right respectively, such that the original order of the non-witnesses in the second row is kept intact.}
\label{boundingoperationillustration}
\end{center}
\end{figure}

This operation is quite intuitive: we move from the left to the right and exchange points that are not witnesses from $F$ (i.e. either non-witnesses or witnesses from $G$), with witnesses from $F$ that are further to the right. This we do, until all the witnesses from $F$ are aligned in the first $\bar{\Lambda}_{P}$ positions. We also do the same for the witnesses of $G$ in the other direction of the order statistics. Figure \ref{boundingoperationillustration} illustrates this operation. Implementing the same counting process that produced $V_{m,z}$ in the original sample leads to a new counting process $z \mapsto \bar{V}_{m,z}$. Lemma \ref{barVmzproperties} collects some properties of this process, which is now much more well-behaved than the original $V_{m,z}$.

\begin{lemma} \label{barVmzproperties}
$\bar{V}_{m,z}$ obtained from the bounding operation above has the following properties:
\begin{itemize}
    \item[(i)] $\mathbb{P}\left(\forall z \in J_{m,n}: \bar{V}_{m,z} \geq V_{m,z}\right) = 1$, i.e. it stochastically dominates $V_{m,z}$.
        \item[(ii)] It increases linearly with slope 1 for the first $\bar{\Lambda}_{P}$ observations and stays constant for the last $\bar{\Lambda}_{Q}$ observations.
    \item[(iii)] If $\bar{\Lambda}_{P} < m$ and $\bar{\Lambda}_{Q}< n$ and for $z \in \{\bar{\Lambda}_{P}+1, \ldots, N-\bar{\Lambda}_{Q}-1   \}$, it factors into $\bar{\Lambda}_{P}$ and a process $\tilde{V}_{m -\bar{\Lambda}_{P} , z-\bar{\Lambda}_{P}}$, with
\begin{equation}\label{middlepart}
    \tilde{V}_{m -\bar{\Lambda}_{P}, z-\bar{\Lambda}_{P}} \sim \mbox{Hypergeometric}\left( z-\bar{\Lambda}_{P}, m + n -\bar{\Lambda}_{P} -\bar{\Lambda}_{Q}, m -\bar{\Lambda}_{P} \right).
\end{equation}
\end{itemize}
\end{lemma}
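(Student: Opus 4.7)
My plan is to treat the three claims separately, exploiting the fact that the bounding operation is built from elementary swaps and reducing part (iii) to a classical exchangeability argument. Throughout, I view the operation as a finite sequence of swaps applied to the sorted sample and I track $V_{m,z}$, the count of $P$-samples among the first $z$ sorted positions, one swap at a time.

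For (i), I would show that each individual swap is a non-decreasing operation on the process $z \mapsto V_{m,z}$. Consider a left-to-right step at position $i$: it exchanges the content of $Z_{(i)}$ with a $P$-witness drawn from some later position $j > i$. A case split on what originally sat at position $i$ suffices. If it was a $P$-sample (either a true $P$-witness or an artificially marked $P$-non-witness), the swap preserves $P$-counts at every $z$. If it was a $Q$-sample, then after the swap the $P$-count increases by one for $z \in \{i,\ldots,j-1\}$ and is unchanged elsewhere. The right-to-left sweep is symmetric: a $P$-sample at the right end is moved leftward, which can only preserve or increase $V_{m,z}$. Induction over all swaps then yields $\bar V_{m,z} \geq V_{m,z}$ almost surely and pointwise in $z$.

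For (ii), the claim is immediate from how the operation terminates. By construction, the first $\bar\Lambda_{P}$ positions are occupied by entries marked as $P$-witnesses, which are $P$-samples, so $\bar V_{m,z} = z$ for $z \leq \bar\Lambda_{P}$. Symmetrically, the last $\bar\Lambda_{Q}$ positions are $Q$-samples; since the total number of $P$-samples is $m$, $\bar V_{m,z}$ must equal $m$ on the last $\bar\Lambda_{Q}$ positions.

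For (iii), the key observation is that after both sweeps the middle positions $\bar\Lambda_{P}+1,\ldots,N-\bar\Lambda_{Q}$ contain precisely those observations that were not selected as $P$-witnesses and not selected as $Q$-witnesses. By the decomposition in \eqref{fundMixture}, every non-witness observation, whether originally drawn from $P$ or from $Q$, arises from the common component $H_{P,Q}$. Consequently the middle samples form an iid collection from $H_{P,Q}$ whose $P/Q$ labels are independent of their $\rho$-values. Conditional on the counts $m-\bar\Lambda_{P}$ and $n-\bar\Lambda_{Q}$, the ranking of labels over the sorted middle positions is uniform over all $\binom{m+n-\bar\Lambda_{P}-\bar\Lambda_{Q}}{m-\bar\Lambda_{P}}$ arrangements, which is exactly the urn model producing the hypergeometric distribution claimed in \eqref{middlepart}. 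The delicate step is precisely this last one: I would need to verify that artificially marking some $P$-non-witnesses as ``witnesses'' when $\bar\Lambda_{P} > \Lambda_{P}$ does not destroy exchangeability of the remaining middle observations. The point is that the extra witnesses are selected uniformly at random from the non-witnesses, independently of their $\rho$-values, so conditional on the identity of the observations left in the middle, those samples are still iid from $H_{P,Q}$ with labels independent of values, and the standard exchangeability argument then delivers the hypergeometric law.
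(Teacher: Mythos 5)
Your proposal is correct and follows essentially the same route as the paper: (i) via monotonicity of the witness-moving swaps, (ii) directly from the construction, and (iii) by observing that the non-witness observations on both sides are iid from the common component $H_{P,Q}$ in \eqref{fundMixture}, so their labels are exchangeable given the counts $m-\bar{\Lambda}_{P}$ and $n-\bar{\Lambda}_{Q}$, which is exactly the urn model yielding the hypergeometric law; the paper phrases this by deleting the (possibly artificially marked) witnesses and noting the remaining order statistics are drawn from $H_{P,Q}$. Your explicit swap-by-swap case analysis for (i) and your remark that the random precleaning is label-only and value-independent, hence preserves exchangeability, merely spell out steps the paper leaves implicit.
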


\begin{proof}
(i) follows, as $\bar{V}_{m,z}$ only counts observations from $F$ and these counts
can only increase when moving the witnesses to the left. (ii) follows directly from the bounding operation, through \eqref{witnessdomination}.

(iii) According to our assumptions, we deal with the order statistics of two independent iid samples $(X_1, W^{X}_1), \ldots (X_m, W^{X}_m)$ and $(Y_1, W^{Y}_1), \ldots (Y_n, W^{Y}_n)$, with $X|W^{X}=1$ being equal in distribution to $Y|W^{Y}=1$. We consider their order statistics $(Z_{(1)}, W^{Z}_1), \ldots (Z_{(N)}, W^{Z}_{N})$. In the precleaning step, we randomly choose $\bar{\Lambda}_{P} - \Lambda_{P}$ $i$ such that $W^{P}_i=0$ and $\bar{\Lambda}_{Q}- \Lambda_{Q}$ $j$ such that $W^{P}_j=0$ and flip their values such that $W^{P}_i=1$ and $W^{Y}_j=1$. Let $\mathcal{I}(\bar{\Lambda}_{P}, \bar{\Lambda}_{Q})$ denote the index set $ \{i: W^{P}_i=1 \text{ or } W^{Y}_i=1 \}$ and let $\mathcal{I}^c:=\mathcal{I}(\bar{\Lambda}_{P}, \bar{\Lambda}_{Q})^c=\{1,\ldots, N \} \setminus \mathcal{I}(\bar{\Lambda}_{P}, \bar{\Lambda}_{Q})$. ``Deleting'' all observations, we remain with the order statistics $(Z_{(i)})_{i \in \mathcal{I}^c}$. By construction, up to renaming the indices, we obtain an order statistics $Z_{(1)}, \ldots, Z_{(N-\bar{\Lambda}_{P}- \bar{\Lambda}_{Q})}$ drawn from the common distribution $H_{P,Q}$. Therefore the counting process $V_{\mathcal{I}, z}=(m-\bar{\Lambda}_{P}) \hat{F}(Z_{(z)})$ is a hypergeometric process.
\end{proof}

 \end{document}